\newtheorem{theorem}{Theorem}[section]
\newtheorem{lemma}{Lemma}[section]
\newtheorem{proposition}{Proposition}[section]
\newtheorem{remark}{Remark}[section]
\newtheorem{definition}{Definition}[section]
\DeclareMathOperator*{\essinf}{ess\,inf}
\DeclareMathOperator*{\esssup}{ess\,sup}
\newcommand{\Section}[1]{\section{#1}\setcounter{equation}{0}}
\newcommand{\DD}{\mathcal{D}}
\def\II{\mathfrak{I}}
\def\RR{\mathsf{R}}
\def\MM{\mathsf{M}}
\renewcommand{\O}{\mathcal{O}}
\newcommand{\R}{\mathbb{R}}
\newcommand{\N}{\mathbb{N}}
\newcommand{\eps}{\varepsilon}
\renewcommand{\t}{\tau}
\newcommand{\s}{\sigma}
\newcommand{\brho}{{\bar \rho}}
\newcommand{\bvv}{{\bar \vv}}
\newcommand{\mm}{\mathtt m} 
\newcommand{\vv}{\mathtt v}
\newcommand{\tv}{\mathrm{TV}\,}
\newcommand{\BV}{$BV$\ }
\newcommand{\DT}{{\Delta t}}
\newcommand{\osc}{{\rm osc}\,}
\newcommand\ds{\displaystyle}
\begin{document}

\markboth{
Amadori and 
Christoforou}{$BV$ solutions for a hydrodynamic model of flocking--type with all-to-all interaction kernel}

\title[]{$BV$ solutions for a hydrodynamic model of flocking--type\\ with all-to-all interaction kernel}

\author[]
{Debora Amadori}
\address{\newline 
Dipartimento di Ingegneria e Scienze dell'Informazione e Matematica (DISIM), University of L'Aquila -- L'Aquila, Italy}
\email{debora.amadori@univaq.it}

\author[]
{Cleopatra Christoforou}
\address{\newline Department of Mathematics and Statistics, University of Cyprus -- Nicosia, Cyprus}
\email{christoforou.cleopatra@ucy.ac.cy}

\date{}


\begin{abstract}
We consider a hydrodynamic model of flocking-type with all-to-all interaction kernel in one-space dimension and establish global existence 
of entropy weak solutions \textit{with concentration} to the Cauchy problem for any $BV$ initial data that has finite total mass confined in a bounded interval 
and initial density uniformly positive therein. In addition, under a suitable condition on the initial data, we show that entropy weak solutions \textrm{with concentration} 
admit time-asymptotic flocking.
\end{abstract}

\keywords{$BV$ weak solutions, global existence, vacuum,  front tracking, time-asymptotic, self-organized dynamics, flocking.}

\subjclass[2010]{Primary: 35L65; 35B40; Secondary: 35D30; 35Q70; 35L50.}

\maketitle

\Section{Introduction}	

The mathematical modeling of self-organized systems such as flock of birds, a swarm of bacteria or a school of fish, has brought up new
mathematical challenges and this area of study is currently receiving widespread attention in the mathematical community. 
One of the major questions addressed concerns the emergent behavior in these systems and in particular, the emergence of flocking behavior. 
Many models have been introduced to appraise the emergent behavior of self-organized systems. 
Actually, many of these models have arised from the inspiring work of Cucker and Smale\cite{CuS1}, while led to many subsequent studies;
cf. Refs.~\cite{CFTV-2010,Shv2021} and the references therein. 
Most studies on flocking models deal with the behavior of particle model or the corresponding kinetic equation and its 
hydrodynamic formulation (cf. Refs.~\cite{HT08,HL09,MT11,KMT13,HaHuangWang2014,HaKangKwon2014,HaKangKwon2015,KMT15}). 
So far this topic for self-organized dynamics has been investigated mainly in the context of solutions with no discontinuities. 

In this article, we are interested in {\em weak solutions} to the hydrodynamic model of flocking-type in one-space dimension 
that takes the form
\begin{equation}\label{eq:system_Eulerian}
\begin{cases}
\partial_t\rho +  \partial_x (\rho \vv)  = 0, &\\
\partial_t (\rho \vv)  +  \partial_x \left(\rho \vv^2 + p(\rho) \right) =\int_\R \rho(x,t)\rho(x',t)\left(\vv(x',t)  - \vv(x,t) \right)\,dx'  &
\end{cases} 
\end{equation}
where $(x,t)\in \R\times [0,+\infty)$. Here $\rho\ge 0$ stands for the mass variable, $\vv$ the velocity and $p$ the pressure, 
while we denote by $\mm=\rho\vv$ the momentum.

In the work of Karper, Mellet and Trivisa\cite{KMT15}, they investigate the hydrodynamic limit ($\epsilon\to0$) of the 
kinetic Cucker-Smale flocking model on $(x,t,\omega)\in (0,T)\times\R^d\times\R^d$
\begin{equation}\label{S1eq:hydro}
f_t^\epsilon+\omega\cdot\nabla_x f^\epsilon+\text{div}_{\omega}(f^\epsilon L[f^\epsilon])=\frac1\epsilon\Delta_\omega 
f^\epsilon+\frac1\epsilon \text{div}_\omega(f^\epsilon(\omega-\vv^\epsilon))
\end{equation}
with $f^\epsilon\dot=f^\epsilon(x,t,\omega)$ being the scalar density of individuals and $L$ the alignment operator 
that is the usual Cucker-Smale operator
$$ 
L[f] (x,t,\omega)~\dot =~\int_{\R^d}\int_{\R^d} K(x,y) f(y,w)(w-\omega)dw\,dy
$$
where $K$ is a smooth symmetric kernel and $\epsilon>0$ a small positive parameter.

On the right hand side of \eqref{S1eq:hydro}, the first term is due to the presence of a stochastic forcing at the particle level, 
see Ref.~\cite{BCC-2011}, while the last term in~\eqref{S1eq:hydro} is the strong local alignment interaction with $\vv^\epsilon$ 
to be the average local velocity. 
This alignment term was derived in Ref.~\cite{KMT13} from the Motsch-Tadmor\cite{MT11} alignment operator (MT) as a singular limit. 
The MT operator was introduced in an effort to improve the standard Cucker-Smale model at small scales. 

In Ref.~\cite{KMT15}, they study the singular limit corresponding to strong noise and strong local alignment, i.e. $\epsilon\to0$.  
For this limit to hold, formally it must be
\begin{equation*}
    \Delta_\omega f^\epsilon+ \text{div}_\omega(f^\epsilon(\omega-\vv^\epsilon))
\to 0\qquad {\rm as }~\epsilon\to 0\,.
\end{equation*}
Then, the limit of $f^\epsilon$ will have the form
\begin{equation}\label{eq:conv-to-Maxwellian}
    f^\epsilon \to f(x,t,\omega)=\rho(x,t) e^{-\frac{|\omega-\vv(x,t)|^2}{2}}\,,
\end{equation}
while the macroscopic variables $\rho,\,\vv$, which are the $\epsilon\to0$ limits of
$$
\rho^\epsilon\dot=\int f^\epsilon d\omega,\qquad \rho^\epsilon \vv^\epsilon\dot=\int f^\epsilon \omega d\omega\;,
$$
satisfy the Euler-type flocking system~\eqref{eq:system_Eulerian} with pressure
\begin{equation}\label{gamma=1}
p(\rho)= \alpha^2\rho\,, \qquad \alpha>0\,.
\end{equation}
In Ref.~\cite{KMT15} the authors prove this limit rigorously, showing the convergence of weak solutions to the kinetic
equation~\eqref{S1eq:hydro} to strong (suitably smooth) solutions of the Euler system.

We remark that, in the literature, hydrodynamic models for flocking are often described by a pressureless Euler system. 
This results from a microscopic description of the particles motion, that does not contain a stochastic forcing. 
As a consequence, the kinetic equation does not contain the diffusion term, and the momentum equation can be closed by 
the mono-kinetic ansatz 
$$f(x,t,\omega)=\rho(x,t) \delta_{\omega - \vv(x,t)}\,,$$
differently from \eqref{eq:conv-to-Maxwellian}. The system with pressure received less attention than the pressureless one, 
in particular from the point of view of weak solutions. A result on smooth, space-periodic solutions 
to this model with pressure is established in Ref.~\cite{Choi2019}.

Concerning more recent development of the Euler-alignment system, see the detailed exposition of the current state of the theory 
in Ref.~\cite{Shv2021}, from \S 6, and references therein. In particular, a central topic of interest is the occurrence of 
flocking in presence of \emph{topological communication} interaction, that is, of communication that depends on a weighted distance 
based on the local mass, in contrast with the metric distance between individuals. This approach raises several difficulties; 
for instance it is lost, in general, the symmetry property of the interaction, that instead holds for the metric distance. 
The analysis of this type of problem was initiated in Ref.~\cite{MT11}; we refer to Refs.~\cite{ST2020,LRS22} for recent developments on the pressureless models with topological communication, 
where existence and global regularity results of multidimensional periodic solutions are proved.
Another interesting topic that has been studied for the Euler alignment system is related to critical thresholds
on the initial configurations that either lead to global regular solutions which must satisfy time-asymptotic flocking or provoke finite time blow up of solutions\cite{TT2014,CCTT-2016}. In particular we refer the reader to Ref.~\cite{CCTT-2016} for the study of sharp critical thresholds on the pressureless system and an investigation on the isothermal one.
We refer  the reader to Ref.~\cite{ABFHKPPS2019} for a general survey of mathematical models for collective dynamics, 
more specifically to \S 5 for an overview of kinetic and macroscopic equations arising in the context of species behavior.

In this article the hydrodynamic limit equations of Karper, Mellet and Trivisa\cite{KMT15} is the central theme of this paper 
for the {\em all-to-all} interaction kernel, that is, the situation of $K$ being independent on the positions $x$, $y$. 
We will assume that $K(x,y)\equiv 1$. In particular we are interested in the existence of weak solutions $(\rho,\mm)$
    \begin{equation*}
        \rho=\rho(x,t),\qquad \mm(x,t) = \rho(x,t) \vv(x,t)\,,
    \end{equation*}
to the Cauchy problem that consists of system~\eqref{eq:system_Eulerian} 
with initial data for the density $\rho$, and momentum $\mm=\rho\vv$:
\begin{equation}\label{eq:init-data}
(\rho,\mm)(x,0)=\left(\rho_0(x), \mm_0(x)\right)\,\qquad x\in\mathbb{R}\;.
\end{equation}

More precisely, our aim is twofold for the Cauchy problem~\eqref{eq:system_Eulerian},\,\eqref{eq:init-data} with~\eqref{gamma=1}.
First, we are interested in the global existence of solutions with initial data of bounded variation ($BV$), and second, 
in the description of flocking-type behavior.

Having in mind that the hydrodynamic limit of large particle self-organized systems yields the macroscopic density and momentum $(\rho,\mm)$,
$\mm=\rho \vv$, satisfying the equations of conservation of mass and momentum ~\eqref{eq:system_Eulerian}, cf. Ref.~\cite{KMT15}, 
we impose conditions on the initial data~\eqref{eq:init-data} that are suitable in this framework and illustrate the emergent behavior 
of self-organized systems. More precisely, we assume that the initial total mass $\rho$ is confined in a bounded interval, 
and is uniformly positive in there i.e.  there exist $a_0 < b_0$ such that,  for $I_0 ~\dot =~[a_0, b_0]$
\begin{equation}\label{hyp-init_data}
\begin{cases}
\essinf_{I_0} \rho_0 >0\,,&\\ 
\rho_0(x) = 0 &  \forall\,x\not \in I_0 \,. 
\end{cases}
\end{equation}
Regarding $\mm_0$, it is meaningful to define it only in the region where the density is positive; for simplicity, 
we choose to extend it to 0 outside that region. Therefore, in the following, we assume $\mm_0\in BV(\R)$ and that
\begin{equation}\label{hyp-init_data-v}
\mm_0(x) = 0\qquad  \forall\,x\not \in I_0\,.
\end{equation}
Moreover, we use $\vv_0=\mm_0/\rho_0$ only in $I_0$.

To introduce the notion of \emph{entropy weak solutions with concentration}, we need the functions 
$$
\eta(\rho,\mm)\,,\quad q(\rho,\mm)\,,
$$ 
defined on $(0,+\infty)\times \R$, in terms of $\rho> 0$ and $\mm$, that constitute a pair of entropy-entropy flux functions 
for the system \eqref{eq:system_Eulerian} if: 
they are differentiable on $(0,+\infty)\times \R$, $\eta$ is convex and the following relations hold,
\begin{equation*}
    \left(-\vv^2 + p'\right)\eta_{\mm} = q_\rho\,,\qquad 
    \eta_\rho + 2\vv \eta_\mm = q_{\mm}\,.
\end{equation*}

The definition of entropy weak solutions now follows.

\begin{definition}\label{entropy-sol} Given the initial data $(\rho_0,\mm_0)\in BV(\R)$, together with 
\eqref{hyp-init_data} and \eqref{hyp-init_data-v}, let $(\rho,\mm): [0,+\infty)\times \R \to \R^2$ 
be a function with the following properties:
\begin{itemize}
    \item the map $t\mapsto (\rho,\mm)(\cdot,t) \in L^1_{loc} \cap~ BV$  is continuous in $L^1_{loc}$;
    \smallskip
 \item 
 $\displaystyle\lim_{t\to 0+}(\rho, \mm)(\cdot,t)=\left(\rho_0, \mm_0\right)$ in $L^1_{loc}$;
    
     \smallskip
     
    \item there exist two locally Lipschitz curves $t\mapsto a(t)$, $b(t)$, $t\in[0,+\infty)$ and a value $\rho_{inf}>0$ such that the following holds:
\begin{itemize}
\item [(i)]
$
a(0)=a_0\,,\quad b(0)=b_0\,;\qquad a(t)<b(t)\qquad  \mbox{ for all }t>0\,;
$

\smallskip
\item[(ii)] having set
\begin{equation*}
    I(t) = [a(t),b(t)]\,,\quad t\ge 0\,,
\end{equation*}
the following holds:
\begin{equation}\label{solution-structure}
\begin{cases}
\essinf_{I(t)} \rho(\cdot,t) \ge \rho_{inf}>0\,,&\\[1mm] 
(\rho,\mm)(x,t) = 0 &  \forall\,x\not \in  I(t) \,.
\end{cases}
\end{equation}
\end{itemize}
\end{itemize}
Then $(\rho, \mm)$ is an entropy weak solution \textit{with concentration} along $a(t)$ and $b(t)$ of the problem~\eqref{eq:system_Eulerian},
\eqref{eq:init-data} with \eqref{gamma=1}, if 
\begin{itemize}
    \item[(a)] the following integral identities hold true for all test functions $\phi\in C^\infty_0(\R\times (0,\infty))$:
\begin{align}
&\iint_{\Omega} \left\{\rho\phi_t + \mm\phi_x \right\}\; dxdt=0\;,\label{S1:rho-eq-phi2}\\
&\iint_{\Omega}\left\{ \mm\phi_t
    + \left[ \frac{(\mm)^2}{\rho}+ p(\rho)  \right]\phi_x \right\}dx dt \nonumber\\
    &-\iint_{\Omega}\left[\mm
   \left( \int_\R   \rho(x',t)d x' \right)
    -\rho 
    \left(\int_\R 
     \mm(x',t)d x'+P_b(t)-P_a(t)\right)\right] \phi \,dx dt \nonumber\\
     &   - \int_0^\infty   \left[p(\rho(b(t)-,t)) \phi(b(t),t) -p(\rho(a(t)+,t)) \phi(a(t),t) \right] \,dt=0\;,\label{S1:m-eq-phi2}
\end{align}
with
    \begin{equation*}
\Omega= \{ (x,t);\ t> 0\,,\  x\in (a(t),b(t))\}\subset \R\times(0,+\infty)
\end{equation*} and
\begin{equation}\label{def:Pnu-intro}
\begin{cases}
\ds P_b(t) :=  \int_0^t e^{-M(t-s)}p(\rho(b(s)-,s))\, ds\,,  & \\[2mm]
\ds P_a(t) :=  \int_0^t e^{-M(t-s)} p(\rho(a(s)+,s))\, ds\,;
    \end{cases}
\end{equation}
     \smallskip
    \item[(b)] for every convex entropy $\eta$ for the system \eqref{eq:system_Eulerian}, with corresponding entropy flux $q$, the following inequality
 \begin{equation}\label{entropy-cond_rho-m}
    \partial_t \eta(\rho,\mm) + \partial_x q(\rho,\mm)\le \eta_{\mm} \int_\R \left(\rho(\cdot,t)\mm(x',t)  - \rho(x',t)\mm(\cdot,t) \right)\,dx'
\end{equation}

holds in the sense of distributions on the open set $\Omega$.
\end{itemize} 
\end{definition}
Throughout this article, we are interested in an \emph{entropy weak solution} in the sense of Definition~\ref{entropy-sol}
that satisfies the additional ad-hoc boundary condition:
\begin{equation}\label{ad-hoc vacuum}
\text{\emph{The vacuum region is connected with the non-vacuum one by a shock discontinuity.}} 
\end{equation}
This condition is imposed in order to capture the flocking behavior in which a sharp front with finite speed is expected to arise. 
We exclude the case of a rarefaction because a rarefaction connecting a vacuum region with a non-vacuum would necessarily 
have infinite speed and this is not suitable for our model. Consequently, it is shown that the entropy weak solution with concentration
constructed in this article consists of two singularities connecting the vacuum region with the non-vacuum one that emanate from $a_0$ 
and $b_0$, respectively. These would play the role of the curves called $a(t)$ and $b(t)$ in the Definition~\ref{entropy-sol}. 
For this reason, we define \emph{the total momentum} $\widehat\mm$ to be the distribution
\begin{equation}\label{def:m-hat}
    \widehat \mm(\cdot,t) := \mm (\cdot,t) + \delta_{b(t)} P_b(t) - \delta_{a(t)} P_a(t)\,,\quad t>0\,.
\end{equation}
where $\delta_a$ denotes the Dirac delta function at $a$. This new singularity of the total momentum $\widehat m$ along the free boundaries $a(t)$ and $b(t)$ is known as delta shock and references can be found in 
Ref.~\cite{Dafermosbook}, Chapter 9.
We further use the following standard notation $<\cdot,\cdot>$:
$$
<\widehat\mm(\cdot,t),\phi(\cdot,t)>:=\int_{I(t)} \mm (x,t)\phi(x,t)dx+ P_b(t)\phi(b(t),t) -  P_a(t)\phi(a(t),t),\quad t>0 
$$
as the value of the functional $\widehat \mm$ over $C_0^\infty$, for all test functions $\phi\in C_0^\infty(\R\times\R_+)$. Note here that $\mm=0$ for $x\notin I(t)$.

Now we state our first main result, on the global existence in time of entropy weak solutions with concentration to
\eqref{eq:system_Eulerian} with bounded support.

\begin{theorem}\label{Th-1} Assume that the initial data $(\rho_0,\mm_0)\in BV(\R)$ and satisfy \eqref{hyp-init_data},
\eqref{hyp-init_data-v} with pressure~\eqref{gamma=1}.
Then the Cauchy problem \eqref{eq:system_Eulerian}, \eqref{eq:init-data} admits an entropy weak solution with concentration $(\rho,\mm)$
in the sense of Definition~\ref{entropy-sol}. Moreover one has that
\begin{equation}\label{cons-of-mass}
\int_\R \rho(x,t)\,dx = \int_{I(t)} \rho(x,t)\,dx = \int_\R \rho_0(x)\,dx\,,\qquad\forall\, t\ge 0\,; 
\end{equation}
\begin{equation}\label{cons-of-momentum}
<  \widehat\mm(\cdot,t) ,\phi_1>= \int_{I(t)} \mm(x,t) \,dx + P_b(t) - P_a(t)
= \int_\R \mm_0(x) \,dx\,,\quad\forall\, t\ge 0  \,,
\end{equation}
for any test function $\phi_1 = \phi_1(x)$ that is equal to $1$ on $I(t)$.
\end{theorem}

As a consequence of Theorem~\ref{Th-1}, there is conservation of mass and momentum by the entropy weak solution constructed in time, 
with values:
\begin{align} \label{eq:def-M} 
 &
 M\dot =\int_\R \rho_0(x)\,dx\ >0\,, \\   \label{eq:def-M1}
 &
 M_1\dot =\int_\R \mm_0(x) \,dx \,.
 \end{align}
Also, the variable $\vv$ is used in the support $I(t)$ for all $t>0$ satisfying $\vv=\frac{\mm}{\rho}$, where it is well defined, 
while only the variables $\rho$ and $\mm$ are used in the complement of $I(t)$. By setting the average velocity to be
\begin{equation}\label{def:vbar}
\bar \vv ~\dot =~ {M_1}/{M} 
\end{equation}
and by means of \eqref{cons-of-mass}--\eqref{cons-of-momentum}, the integral term on the right hand side of \eqref{eq:system_Eulerian} can be rewritten as
\begin{align*}
\rho(x,t) \left\{ <  \widehat\mm(\cdot,t) ,\phi_1> -  \vv(x,t)  \int_\R \rho(x',t)\,dx'\right\}
&= \rho(x,t) \left(M_1 - \vv(x,t) M \right)\\
&= M \rho(x,t) \left(\bar \vv - \vv(x,t)\right)
\end{align*}
and system \eqref{eq:system_Eulerian} rewrites as
\begin{equation}\label{eq:system_Eulerian_M-M1}
\begin{cases}
\partial_t\rho +  \partial_x (\rho \vv)  = 0, &\\
\partial_t (\rho \vv)  +  \partial_x \left(\rho \vv^2 + p(\rho) \right) =  
- M \rho \left(\vv -\bar \vv \right) \,.
&
\end{cases} 
\end{equation}
In other words, solutions to system \eqref{eq:system_Eulerian} that conserve mass and momentum, i.e. obey \eqref{cons-of-mass} and
\eqref{cons-of-momentum}, then they also satisfy the system of balance laws~\eqref{eq:system_Eulerian_M-M1} and vice versa. 
For this reason, our plan of action to prove Theorem~\ref{Th-1} is to establish an entropy weak solution with concentration
to~\eqref{eq:system_Eulerian_M-M1} for the initial data assumed in the theorem. 

System~\eqref{eq:system_Eulerian_M-M1} belongs to the class of system of balance laws and an exposition of the current state of the theory
can be found in the book Ref.~\cite{Dafermosbook}. The Cauchy problem for strictly hyperbolic systems of balance has been studied initially 
by Dafermos-Hsiao\cite{DafermosHsiao} in the framework of entropy weak solutions and the condition of \emph{strongly dissipative} 
source terms has been introduced in Ref.~\cite{DafermosHsiao} (see also Ref.~\cite{AmadoriGuerranote}) and even relaxed to \emph{weakly
dissipative} sources in Ref.~\cite{D3}. Under these conditions, there is no arbitrary amplification in total variation due to the presence 
of the sources and uniform estimates in time allow to construct entropy weak solutions for all times. However, the source term
in~\eqref{eq:system_Eulerian_M-M1} does not fulfill these dissipativeness conditions and furthermore, our Cauchy problem is not 
strictly hyperbolic because of the vacuum present in $\R\setminus[a_0,b_0]$. Hence, the existing results cannot be applied to our situation.
Actually, the notion of the physical vacuum boundary for system~\eqref{eq:system_Eulerian_M-M1} was introduced in Refs.~\cite{Liu1996,LiuYang2000} and many results are available in the literature related to the blow up of regular solutions due to the singular behavior at the vacuum. Another interesting direction addressed for system~\eqref{eq:system_Eulerian_M-M1} is the asymptotic behavior of the solutions that are conjectured to obey the porous media equation. An important progress on this conjecture has been made, see for instance Refs.~\cite{HuangPan2006,HuangMarcati2005,HuangPanWang2011}. It should be clarified that the long time behavior to the porous media equation for the case of the pressure~\eqref{gamma=1} is considered only  in Ref.~\cite{HuangPan2006}, but for initial density $\rho_0$ that tends to a positive value as $x\to\pm\infty$. Thus, the result in Ref.~\cite{HuangPan2006} does not include the initial data~\eqref{hyp-init_data} in our problem. Also, the analysis in Ref.~\cite{HuangPan2006} employs the compensated compactness method that provides global solutions in $L^\infty$. However, in Refs.~\cite{HuangMarcati2005,HuangPanWang2011} the authors assume that the pressure is of the form $p(\rho)=\rho^\gamma$ with $\gamma>1$ and therefore, their analysis cannot be applied again here. 

An interesting feature of our model is that the Riemann solution around vacuum that is admissible in the sense of ``flocking behavior" consists a shock in contrast to the case of gas dynamics in which a rarefaction wave arises connecting the vacuum with the non-vacuum region. This feature can be evinced by the following example. Consider the initial data
\begin{equation}\label{eq:elem-init-data}
    \left(\rho_0(x),\mm_0(x)\right) = \begin{cases}
    \left(\bar\rho, \bar\rho\,\bar \vv \right) &\qquad  x\in[a_0,b_0]\\[2mm]
    \left(0,0\right) &\qquad  x\not \in[a_0,b_0]
     \end{cases}
\end{equation}
with $\bar\rho>0$ and $\bar \vv$ constant values. For this special case, the entropy weak solution with concentration
to~\eqref{eq:system_Eulerian}, \eqref{eq:init-data}, as constructed in Theorem~\ref{Th-1}, is the stationary solution translated 
with speed $\bar \vv$, i.e. 
\begin{equation}\label{eq:elem-solutions}
    \left(\rho(x,t),\mm(x,t)\right) = \begin{cases}
    \left(\bar\rho, \bar\rho\,\bar \vv \right) &\qquad  x\in I(t)\\[2mm]
    \left(0,0\right) &\qquad  x\not \in I(t)
     \end{cases}
\end{equation}
%
for all $t>0$, with $I(t)=[a_0+\bar \vv t,b_0+\bar \vv t]$, together with equal concentration of the momentum along the two discontinuities
with weight $P_a(t)=P_b(t)=p(\bar\rho) M^{-1} \left(1-e^{-Mt}\right)$ and thus, it corresponds to the notion of solution given in
Definition~\ref{entropy-sol}. This is consistent with the interpretation of the flocking model; 
in particular, no rarefactions appear between the region with positive density and the vacuum regions. 
A detailed discussion can be found in Subsection~\ref{S2new}.

Since the Cauchy problem~\eqref{eq:system_Eulerian_M-M1},~\eqref{eq:init-data} is not strictly hyperbolic and the techniques 
to construct a convergent approximate sequence to systems of conservation laws require strict hyperbolicity, we overcome this obstacle 
by transforming our problem into Lagrangian coordinates in the spirit of Wagner\cite{W87}. 
By recasting system~\eqref{eq:system_Eulerian_M-M1} from Eulerian $(\rho(x,t),\vv(x,t))$ into the Lagrangian variables $(u(y,t), v(y,t))$, we obtain the equations
\begin{equation}\label{S1eq:system_Lagrangian}
\begin{cases}
\partial_\t u -  \partial_y  v  = 0, &\\
\partial_\t v  +  \partial_y (\alpha^2/u) =   - M (v-\bar \vv) & 
\\ 
\end{cases} 
\end{equation}
with the domain 
$\{(y,t);\ t\ge 0\,,\ y\in (0,M)     
\}$\,. 

In Ref.~\cite{W87}, the equivalence of the Eulerian and Lagrangian equations for weak solutions is established even in the presence of vacuum 
under a condition on the total mass. Such condition requires infinite total mass and therefore the equivalence of weak solutions between the Eulerian 
and Lagrangian solutions does not apply in our framework since we require finite total mass.
Actually, because of the finite mass condition on the initial data~\eqref{hyp-init_data}, the problem in Lagrangian coordinates is not Cauchy any more. 
In short, although we proceed to study the problem in Lagrangian variables, we need to prove the equivalence between the systems in the sense of weak solutions.

Now, existence of weak solutions to the homogeneous system ($M=0$) corresponding to~\eqref{S1eq:system_Lagrangian} was obtained by Nishida\cite{Nishida68}
using the random choice method for initial data of large \BV and both the Cauchy and the boundary problems are discussed. Global existence of entropy weak solutions to systems with source of the form~\eqref{S1eq:system_Lagrangian} has been established in Refs.
~\cite{Dafermos_frictional,LuoNatYan,AmadoriGuerra01}, using either the random choice or the front tracking methods,
but for the Cauchy problem. 
Therefore, their results are not applicable in our case, since we deal with a boundary value problem in Lagrangian variables. On the other hand, in Frid\cite{Frid96} 
the author considers certain initial-boundary value problems on a bounded domain for systems arising in isentropic gas dynamics and elasticity theory, which are different from the one that is treated here.

We remark that, if $p(\rho)$ were chosen to be a generic smooth and nonlinear function with $p'>0$, it would have been natural to assume small $BV$ initial data, 
as in Ref.~\cite{Dafermos_frictional}. However, the choice of $p(\rho)$ in \eqref{gamma=1} allows us to deal with  any initial data with possibly large (but finite) total variation,
as long as the structure described in~\eqref{hyp-init_data}--\eqref{hyp-init_data-v} is respected.

Our strategy to attack this problem is to study the initial-boundary value problem~\eqref{S1eq:system_Lagrangian} in Lagrangian variables for 
{\em non-reflecting} boundary conditions at $y=0,M$ and we construct approximate solutions using the front tracking algorithm; 
cf. Bressan\cite{Bressan_Book} and Holden--Risebro\cite{HoldRisebro2015}. The boundary conditions are expressed by the fact that, 
when a wave-front reaches the boundary, there is no resulting emitted wave. As it is shown in Section~\ref{Sect:3}, 
this is the natural counterpart to the behavior of the free boundaries that, in Eulerian variables, delimit the non-vacuum region.

Then uniform estimates on the total variation in space and time allow us to pass to a convergent subsequence and recover an entropy weak solution to the problem in Lagrangian variables that conserves mass and momentum in time. We work, at the level of the approximate solutions, to show the equivalence between Eulerian and Lagrangian variables in the spirit of Ref.~\cite{W87} and within the domain $I(t)=[a(t),b(t)]$ where no vacuum is present. In this way, we construct the approximate solutions to~\eqref{eq:system_Eulerian_M-M1} that inherits the convergence property from the change of coordinates. Also, this coordinate transformation together with the approximation scheme allow us to pass from the non-reflecting boundary conditions at $y=0,M$ to the free boundaries $a(t)<b(t)$ in the Eulerian coordinates. 

An advantage of studying this problem in Lagrangian variables is that the analysis on the bounded domain $[0,M]$ 
(in Lagrangian) provides useful information to the community interested in the numerical analysis for this topic 
on self-organized systems. 

Next, we address the issue of the long-time behavior of the entropy weak solution with concentration to \eqref{eq:system_Eulerian},  \eqref{eq:init-data} with \eqref{gamma=1}, 
whose structure is established in Theorem~\ref{Th-1}. This is an important issue in the study of self-organized systems. The terminology ``flocking" corresponds to the phenomenon in which self-organized individuals using only limited environmental information and simple rules get organized into an ordered motion.
In the spirit of Refs.~\cite{HT08,HL09}, we provide the following definition of flocking behavior. 

\begin{definition}\label{def-flocking} We say that the solution $(\rho,\mm)(x,t)$ to system~\eqref{eq:system_Eulerian}, 
together with the initial data $(\rho_0,\mm_0)$ as in Theorem~\ref{Th-1}, admits time-asymptotic flocking if the following conditions hold true:
\begin{enumerate}
\item the support $I(t)$ of the solution remains bounded for all times, i.e.
\begin{eqnarray}\label{eq:bounded-diameter}
&&\sup_{0\le t <\infty} \{b(t) - a(t)\} <\infty\;.
\end{eqnarray}
\item  the velocity satisfies 
\begin{eqnarray}
&&\lim_{t\to\infty} \esssup_{x_1,x_2\in I(t)} |\vv(x_1,t) - \vv(x_2,t)|   
=0. \label{eq:v-shrinks-to-0}
\end{eqnarray}
\end{enumerate}
\end{definition}
Loosely speaking, condition~\eqref{eq:bounded-diameter} assures that the support of the solution is uniformly bounded, thus defining the
``flock", while condition~\eqref{eq:v-shrinks-to-0} yields that alignment occurs, i.e. the diameter of the set of velocity states 
within the support $I(t)$ goes to zero time-asymptotically.

Observe that condition \eqref{eq:v-shrinks-to-0} is equivalent to
\begin{equation*}
    \esssup_{x\in I(t)} |\vv(x,t) - \bar \vv| \to 0\qquad t\to\infty\,,
\end{equation*}
and hence, the dynamics of this flock will approach the same velocity $\bar \vv$. Actually, under the assumptions of Theorem~\ref{Th-1},
condition~\eqref{eq:bounded-diameter} is satisfied by the entropy weak solution with concentration $(\rho,\mm)$ constructed. Indeed, by
combining \eqref{solution-structure} and \eqref{cons-of-mass}, it follows that
\begin{equation*}
    b(t) - a(t)\le \frac{M}{\rho_{inf}}\qquad \forall\, t>0
\end{equation*}
and this yields \eqref{eq:bounded-diameter} under the assumptions of Theorem~\ref{Th-1}. The goal is to show that the \emph{time-asymptotic flocking} property,
hence~\eqref{eq:v-shrinks-to-0}, holds as well. To achieve this, we need to impose a special condition on the data that relies on the initial bulk $q$: 
\begin{equation}\label{eq:def-of-m}
q: = \frac 12 \tv \{\ln(\rho_0)\} +  \frac 1{2\alpha} \tv \{\vv_0\}    
\end{equation}
where {\rm TV} stands for the total variation in the support $I_0$. The condition is stated in the next theorem and indicates that flocking occurs when the initial bulk $q$ 
is controlled by the the initial density at the endpoints $a_0$ and $b_0$. In fact, we show that the velocity decays to $\bar \vv$ at an exponential in time rate.

Here, it is our second main result: 

\begin{theorem}\label{Th-2} 
Let  $(\rho,\mm)$ be the entropy weak solution with concentration to~\eqref{eq:system_Eulerian}, \eqref{gamma=1}, \eqref{eq:init-data} 
with the initial data $(\rho_0,\mm_0)\in BV(\R)$ satisfying \eqref{hyp-init_data},~\eqref{hyp-init_data-v}
with $q>0$ as obtained in Theorem~\ref{Th-1}. Suppose that
\begin{align}\label{Th-2assumption}
{e^{2q} M^2} <\alpha\max\left\{\rho_0(a_0+),\rho_0(b_0-)\right\},
\end{align}
holds true, then the solution $(\rho,\mm)$ admits \emph{time-asymptotic flocking}. 
More precisely, the oscillation of the velocity decays exponentially fast, i.e. there exists $t_0>0$ such that 
\begin{align}\label{Th-2exp}
\esssup_{x_1,x_2\in I(t)} |\vv(x_1,t) - \vv(x_2,t)|
\le C_2'e^{-C_1't},\,\qquad \forall\, t\ge t_0
\end{align}
for some positive constants $C_1',\,C_2'$.
\end{theorem}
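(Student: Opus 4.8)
\noindent\textbf{Strategy for the proof of Theorem~\ref{Th-2}.}
Since condition~(1) of Definition~\ref{def-flocking} --- the uniform diameter bound~\eqref{eq:bounded-diameter} --- has already been derived from Theorem~\ref{Th-1} above, the plan is to establish condition~(2), namely~\eqref{eq:v-shrinks-to-0}, in the quantitative form~\eqref{Th-2exp}. As $\osc\{\vv;I(t)\}\le\tv\{\vv(\cdot,t);I(t)\}$, it suffices to show that $\tv\{\vv(\cdot,t);I(t)\}$ decays exponentially, and I would carry out the whole estimate in the Lagrangian variables of~\eqref{S1eq:system_Lagrangian} on the fixed interval $y\in(0,M)$: via the mass coordinate $y=\int_{a(t)}^{x}\rho(\xi,t)\,d\xi$ one has $\vv(x,t)=v(y,t)$ and $\tv\{\vv(\cdot,t);I(t)\}=\tv\{v(\cdot,t);(0,M)\}$, and the Eulerian--Lagrangian equivalence together with the approximate solutions built for Theorem~\ref{Th-1} transfer the bound back to $(\rho,\vv)$. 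I would work with the Riemann invariants $w_1=v-\alpha\ln\rho$, $w_2=v+\alpha\ln\rho$ (so $v=\tfrac12(w_1+w_2)$, $\alpha\ln\rho=\tfrac12(w_2-w_1)$) and set $L(t)=\tv\{w_1(\cdot,t);(0,M)\}+\tv\{w_2(\cdot,t);(0,M)\}$, so that $\tv\{v(\cdot,t)\}\le\tfrac12 L(t)$ and $\tv\{\ln\rho(\cdot,t)\}\le\tfrac1{2\alpha}L(t)$.

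The proof balances two dissipative effects against one amplifying effect. \emph{(i) Friction contracts the velocity:} in the source sub-step of the operator-splitting scheme $v\mapsto\bar\vv+e^{-M\DT}(v-\bar\vv)$ with $\rho$ frozen, which multiplies $\tv\{v\}$ by $e^{-M\DT}$ and --- a direct check shows --- never increases $L(t)$ (it only transfers strength between the two families). \emph{(ii) The non-reflecting boundary drains waves:} $1$-fronts move leftward with speed $-\alpha\rho\le-\alpha\rho_{inf}<0$ and exit at $y=0$, $2$-fronts exit at $y=M$, and no front is emitted when a front reaches $y=0,M$; using that the isothermal system, in the spirit of Nishida~\cite{Nishida68}, has non-increasing total variation of Riemann invariants under interactions, $L(t)$ is non-increasing and every front leaves the domain within time $M/(\alpha\rho_{inf})$. \emph{(iii) Friction also regenerates waves:} the source sub-step converts some $1$-front strength into $2$-fronts and conversely, at a rate of order $M\cdot\tv\{w_j\}$; over one drainage cycle of length $\tau\le M/(\alpha\rho_{inf})$ this produces new wave strength of order $\tfrac{M^2}{\alpha\rho_{inf}}\,L(t)$. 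One therefore gets a geometric decay $L(t+\tau)\le\theta\,L(t)$ with $\theta<1$ --- hence $L(t)\le Ce^{-C_1't}$ --- precisely when $M^2$ is small compared with $\alpha\rho_{inf}$.

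The remaining point, which I expect to be the crux, is to feed a lower bound on $\rho_{inf}(t)$ back into this scheme and read off the hypothesis~\eqref{Th-2assumption}. Two facts are needed. First, $\tv\{\ln\rho(\cdot,t)\}\le\tfrac1{2\alpha}L(t)\le\tfrac1{2\alpha}L(0)\le 2q$ (after the sharp accounting of the initial Riemann-invariant variation in terms of the bulk $q$), so $\rho(y,t)$ lies within a factor $e^{2q}$ of each of the boundary densities $\rho_a(t)=\rho(a(t)+,t)$ and $\rho_b(t)=\rho(b(t)-,t)$, whence $\rho_{inf}(t)\ge e^{-2q}\max\{\rho_a(t),\rho_b(t)\}$; this is the source of the $\max$ in~\eqref{Th-2assumption}. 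Second, along $y=M$ the invariant $w_1$ is modified only by the source (no reflection), which yields $\ln\rho_b(t)=\ln\rho_0(b_0-)+\tfrac1{2\alpha}\,(\text{signed strength of the }2\text{-fronts absorbed at }y=M\text{ up to }t)$, and symmetrically for $\ln\rho_a(t)$; the total $2$-front strength that can ever reach $y=M$ is the initial amount plus what friction manufactures, and --- once an exponential decay rate is in force --- the latter is bounded by a quantity of order $M^2L(0)/(\alpha\rho_{inf})$. A bootstrap then closes: on the maximal interval on which $\rho_{inf}(t)\ge\tfrac12 e^{-2q}\max\{\rho_0(a_0+),\rho_0(b_0-)\}$, the condition~\eqref{Th-2assumption} makes the two displayed estimates yield a strictly better lower bound for $\rho_a(t)$ or $\rho_b(t)$, hence for $\rho_{inf}(t)$, so the interval is $[0,+\infty)$. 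With $\rho_{inf}(t)\ge\delta>0$ for all $t$, parts (ii)--(iii) give $\tfrac{d}{dt}L(t)\le-C_1'L(t)$ for $t\ge t_0$, hence $\osc\{\vv;I(t)\}\le\tv\{v(\cdot,t)\}\le\tfrac12 L(t)\le C_2'e^{-C_1't}$, which is~\eqref{Th-2exp}; the transient $t_0$ absorbs the time needed for the boundary densities and the wave pattern to enter the asymptotic regime. (As a by-product $L(t)\to 0$, so $\tv\{\ln\rho(\cdot,t)\}\to 0$ as well, and the flock becomes asymptotically uniform and moves with the common velocity $\bar\vv$.)

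The hardest step is the bootstrap just described, i.e.\ excluding the runaway scenario in which slow drainage lets friction-regenerated waves pile up, which lowers the boundary densities, which slows drainage further. Two technical matters require care: all of the wave-strength accounting and the boundary-density identities must be carried out on the front-tracking approximants used for Theorem~\ref{Th-1}, with interaction and splitting errors shown to vanish in the limit; and one must justify that a single ``good'' boundary is enough (the role of $\max$ in~\eqref{Th-2assumption}) --- intuitively because friction keeps feeding the opposite family of fronts toward that boundary faster than interactions and the source can regenerate them, while a front stuck near the ``bad'' low-density boundary moves so slowly that it is damped in place by the source.
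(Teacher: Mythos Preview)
Your overall picture---drainage through the boundaries competing against regeneration by the source---is the right intuition, and it is essentially the picture behind the paper's proof. But two concrete points separate your sketch from a proof, and one of them is a genuine gap.

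\medskip
\textbf{The bootstrap on $\rho_{inf}(t)$ is not needed.} The uniform-in-time lower bound on $\rho$ (equivalently, the upper bound on $u$) is already available from the proof of Theorem~\ref{Th-1}: by keeping the \emph{standby fronts} that have exited at $y=0$ or $y=M$ in the functional $L(t)$, one compares $u^\nu(y,t)$ directly to the \emph{initial} boundary values $\tilde u_0^\nu$, $\tilde u_M^\nu$ and obtains $u^\nu\le u_{sup}^\nu=e^{2q}\min\{\tilde u_0^\nu,\tilde u_M^\nu\}$ for all $t$, with no reference to $\rho_a(t)$, $\rho_b(t)$. In density terms this is exactly $\rho_{inf}\ge e^{-2q}\max\{\rho_0(a_0+),\rho_0(b_0-)\}$, which is where the $\max$ in~\eqref{Th-2assumption} comes from. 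So the whole ``runaway scenario'' you worry about, and the bootstrap meant to exclude it, are not in play: the crossing time $T_1=M/(\alpha\rho_{inf})=e^{2q}M/(\alpha\max\{\rho_0(a_0+),\rho_0(b_0-)\})$ is fixed once and for all, and~\eqref{Th-2assumption} reads simply $MT_1<1$.

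\medskip
\textbf{The real gap: the geometric contraction $L(t+\tau)\le\theta L(t)$ does not follow from your ingredients.} Nishida's argument gives only $L(t+\tau)\le L(t)$. What you are missing is a mechanism to account for \emph{reflected waves produced by same-family interactions}: these are fronts that, at time $t+\tau$, are still inside $(0,M)$ and were not present at time $t$, and your item~(ii) does not control them. The paper handles this by introducing a \emph{generation order} $g\ge 1$ for each front (new reflected waves get order one higher than the interacting pair), observing that all fronts of generation $\le k$ have exited by time $kT_1$, and working with the functional
\[
V(t)=\sum_{k\ge 1}\xi^k F_k(t)\,,\qquad 1\le\xi\le c(q)^{-1/2}\,,
\]
where $F_k$ is the $L_\xi$-strength of the generation-$k$ fronts. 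The interaction estimates of Lemma~\ref{lem:shock-riflesso} show that $V$ is non-increasing between time steps, while at each time step $V$ grows by at most a factor $1+\tfrac{\xi^2-1}{2}M\DT$. Combining the two and using that $\tilde F_1(t)=\tilde F_k(t)$ for $t\in((k-1)T_1,kT_1]$ gives $L_{in}(t)\le \tilde F_1(t)\le \xi\exp\!\big(\tfrac{\xi^2-1}{2}Mt-\tfrac{t}{T_1}\log\xi\big)F_1(0)$; the exponent is strictly negative for some $\xi>1$ precisely when $MT_1<1$, i.e.\ under~\eqref{Th-2assumption}. This generation-weighted functional is the technical device your argument is lacking; without it, the ``drainage cycle'' picture remains heuristic.
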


\begin{remark} Let us note that for the trivial case $q=0$, the initial data reduce to the one in \eqref{eq:elem-init-data}, 
for some constant values $\bar\rho>0$ and $\bar \vv$. The corresponding solution, given in \eqref{eq:elem-solutions}, 
admits automatically \emph{time-asymptotic flocking} for every $\bar\rho>0$, $\bar \vv\in \R$. 


On the other hand, condition~\eqref{Th-2assumption} reduces here to $M^2 < \alpha \bar \rho$, with $M=\bar \rho(b_0-a_0)$ 
and $\alpha$ given in \eqref{gamma=1}. Therefore it imposes a restriction on the parameters $\alpha$, $\bar \rho$ and $b_0-a_0$. 
In this sense, condition~\eqref{Th-2assumption} is a sufficient condition for time-asymptotic flocking and we expect it may be relaxed. 
\end{remark}

The analysis of the long time behavior of the approximate solutions, makes use of the results in Refs.~\cite{AmadoriGuerra01,AC_SIMA_2008,ABCD_JEE_2015} 
about the {\em wave strength dissipation} for the system of isothermal flow, that is, the homogeneous version of \eqref{eq:system_Eulerian} 
with \eqref{gamma=1}. These properties are used in Subsection~\ref{subsect:Lxi}, to provide uniform bounds on the vertical traces of approximate
solutions, and in Subsections~\ref{S4.1}, \ref{S4.2} for the time-asymptotic analysis. In the present paper we identify a new functional,
see~\eqref{Vweightedgen}, that allows us to detect a {\em geometric decay property} in terms of the number of wave reflections for the 
homogeneous system mentioned above; we remark that no smallness of the total variation of the initial data is required at this level. 
This decay property allows us to control, under assumption~\eqref{Th-2assumption}, the possible increase of the functional which is due 
to the reflections produced by the damping term, leading to the time-exponential decay established in \eqref{Th-2exp}. 

An interesting research direction is to investigate the validity of Theorems~\ref{Th-1} and \ref{Th-2} 
in presence of a more general kernel $K(x,x')$ in the alignment term,
$$\int_\R K(x,x')\rho(x,t)\rho(x',t)\left(\vv(x',t)  - \vv(x,t) \right)\,dx'\,.$$
In particular, it would be interesting to consider
\begin{equation*}
    K(x,x')= \frac 1{ \left(1+ (x-x')^2\right)^{\frac\beta 2} }\,,\qquad \beta\ge 0
\end{equation*}%
as in Refs.~\cite{CuS1,HT08}. The analysis developed in the present paper, which is for $\beta=0$, aims to be a first step in this direction.

Although the assumption of an all-to-all interaction kernel $K=1$ simplifies the system and the nonlocal term 
turns into a local term, this special case still  possesses many obstacles in the analysis point of view, as already mentioned, 
in order to capture the existence of solutions and time-asymptotic flocking. 
In addition to that, the work on this special case indicates how the mechanism of the dissipative behavior of the solutions works 
and we expect this to be crucial in the extension of this analysis to general kernels.

The problem of uniqueness and stability of the solutions established in Theorem~\ref{Th-1} is challenging and it is beyond the scope of this paper.
However, we expect that the entropy condition inside the region of positive density in conjunction with the \emph{chosen} unique solution to the Riemann
problem that connects the vacuum region with the non-vacuum region, see condition~\eqref{ad-hoc vacuum}, are the two conditions that would guarantee the
uniqueness of the solutions established in Theorem~\ref{Th-1}. Certainly, the analysis would involve many obstacles and one should bring into play further
machinery involving appropriate stability functionals, in the spirit of Ref.~\cite{Bressan_Book}, Chapter 8. 

The structure of the paper is the following: In Section~\ref{newsection}, we first present the Riemann solution to~\eqref{eq:system_Eulerian_M-M1} around the vacuum 
that is admissible for the flocking model, motivate our notion of \emph{weak solution with concentration} and then recast the problem~\eqref{eq:system_Eulerian_M-M1}, \eqref{eq:init-data} in Lagrangian variables.
In Section~\ref{Sect:2}, we construct an approximate sequence of solutions using the front tracking algorithm, define appropriate Lyapunov functionals, 
show that the total variation in space and time of the approximate sequence remains bounded.  We conclude the section by proving the convergence to an entropy weak solution 
of the system in Lagrangian coordinates. Then, in Section~\ref{Sect:3}, we transform this analysis to the problem~\eqref{eq:system_Eulerian_M-M1} in Eulerian and then, prove Theorem~\ref{Th-1}. Last, in Section~\ref{Sect:4}, we establish decay estimates for the approximate sequence as time goes to infinity that allow us to capture the flocking behavior and establish Theorem~\ref{Th-2}. In Appendix, we include the proof of technical lemmas presented in Section~\ref{Sect:2}.

\Section{Set Up of the Problem}\label{newsection}

In this section, the aim is to construct an entropy weak solution \textrm{with concentration} to~\eqref{eq:system_Eulerian_M-M1} with the properties stated in Theorem~\ref{Th-1} using the wave front tracking algorithm. Since the solution satisfies the conservation of mass and momentum, i.e.~\eqref{cons-of-mass}--\eqref{cons-of-momentum} then the solution constructed would also be solution to system ~\eqref{eq:system_Eulerian}. Thus, from here and on, we study system~\eqref{eq:system_Eulerian_M-M1}, with \eqref{eq:def-M}, \eqref{eq:def-M1}.
However, 
before we proceed, we reduce the problem to zero average velocity. More precisely, by the definition of $\bar \vv$ at \eqref{def:vbar}, we perform the change of variables 
\begin{equation*}
x\mapsto x-\bar \vv \,t\,,\qquad \vv\mapsto \vv- \bar \vv
\end{equation*}
that allows us to reduce to the case of $M_1=0$, with $M_1$ being defined at \eqref{eq:def-M1}. Indeed, in the new variables called again $(x,t)$, the average of the momentum $\widehat\mm$ becomes zero: 
\begin{equation}\label{eq:zero-average-momentum}
\int_{I(t)} \mm(x,t) \,dx+P_b(t)-P_a(t) = 0\qquad \forall \,t\ge0
\end{equation}
and system \eqref{eq:system_Eulerian} or~\eqref{eq:system_Eulerian_M-M1} takes the form
\begin{equation}\label{eq:system_Eulerian_K=1}
\begin{cases}
\partial_t\rho +  \partial_x (\rho \vv)  = 0, &\\
\partial_t (\rho \vv)  +  \partial_x \left(\rho \vv^2 + p(\rho) \right) =  - M \rho \vv\, 
&
\end{cases} 
\end{equation}
for solutions that conserve mass and momentum. Therefore, from here and on, we can assume that $M_1=\bar \vv =0$ and consider
system~\eqref{eq:system_Eulerian_K=1}. 
Now, for system~\eqref{eq:system_Eulerian_K=1}, the integral identities of the entropy weak solution with concentration in the sense 
of Definition~\ref{entropy-sol} take the form~\eqref{S1:rho-eq-phi2} and
\begin{align}
\iint_{\Omega}&\left\{ \mm\phi_t
    + \left[\frac{(\mm)^2}{\rho}+p(\rho)  \right]\phi_x
    -M \mm \phi \right\}\,dx dt \nonumber\\
     &   - \int_0^\infty   \left[p(\rho(b(t)-,t)) \phi(b(t),t) -p(\rho(a(t)+,t)) \phi(a(t),t) \right] \,dt=0\;,\label{S1:m-eq-phi}
\end{align}
for all test functions $\phi\in C^\infty_0(\R\times (0,\infty))$.

The following subsections are structured as follows: First, we investigate the Riemann solution to~\eqref{eq:system_Eulerian_K=1} 
around the vacuum and select the admissible solution for the flocking model in Subsection~\ref{S2new}. Next, in Subsection~\ref{S2.1}, 
we describe the problem in Lagrangian coordinates and in Subsection~\ref{S2.2}, we present useful analysis on its Riemann problem. 

\subsection{Admissible Riemann solution around the vacuum}~\label{S2new}
In this subsection, we study the Riemann solution to system~\eqref{eq:system_Eulerian_K=1} having one of the two states being a vacuum. 
Without loss of generality, let us consider the data at $x=b_0=0$ with left state 
$(\rho,\mm)=(\rho_\ell,\mm_\ell)$, and right state $(\rho,\mm)=(0,0)$, taking $\rho_\ell>0$ and $\vv_\ell=\mm_\ell/\rho_\ell$.

Thanks to the assumption \eqref{gamma=1}, the pressure satisfies $p'(\rho)>0$ that ensures the strict hyperbolicity 
of the system \eqref{eq:system_Eulerian} for $\rho>0$, with characteristic speeds $$\mu_\pm=\vv\pm\sqrt{p'(\rho)} = \vv\pm \alpha.$$ 

For simplicity let's assume from here and on that $M=0$ and consider the homogeneous system of~\eqref{eq:system_Eulerian_K=1}
together with the initial data 
\begin{equation}\label{RH1-dataapprox}
(\rho,\mm)(x,0)=\left\{\begin{array}{ll}
(\rho_\ell,\rho_\ell\vv_\ell) & x<0 
\\
(\brho,\mm(\brho))& x>0\,.
\end{array}\right.
\end{equation}
Here $\mm(\brho)=\brho \,\vv(\brho)$ 
and $\bvv=\vv(\brho)$ have to be determined by the wave curve under consideration and then take the limit
$\brho\to0+$.

The Rankine-Hugoniot conditions are:
\begin{align}\label{RH1}
    \sigma(\rho_\ell-\brho) &=\rho_\ell \vv_\ell-\brho\, \bvv\\[2mm] \label{RH2}
    \sigma(\rho_\ell \vv_\ell-\brho \bvv) &=\rho_\ell \vv_\ell^2-\brho\, \bvv^2+p(\rho_\ell)-p(\brho) 
\end{align}
connecting the initial data~\eqref{RH1-dataapprox}. The shock wave curves for each family are given by the expressions:   
\begin{equation*}
 S_1:\quad 
   \vv (\brho)=
		\vv_\ell - \sqrt{\dfrac{\left(p(\brho)-p(\rho_\ell)\right) \left(\brho - \rho_\ell \right) }{\brho\rho_\ell}
		} , \qquad  0<\rho_\ell\le\brho\,,   
				\end{equation*}
\begin{equation}\label{S2reply} 
 S_2:\quad  \vv(\brho) =  
    \vv_\ell -\sqrt{\dfrac{\left(p(\brho)-p(\rho_\ell)\right) \left(\brho - \rho_\ell \right) }{\brho\rho_\ell}
		}  \qquad  0<\brho\le\rho_\ell\,, 
\end{equation} 
while, the rarefaction wave curves for each family are:
\begin{equation} \label{R1reply}  
 R_1:\quad 
   \vv (\brho)=
		\vv_\ell -\int_{\rho_\ell}^\brho \frac{1}{s}\sqrt{p'(s)}\, ds, \qquad \qquad 0<\brho\le\rho_\ell, 
				\end{equation}
\begin{equation*} 
 R_2:\quad  \vv(\brho) =  
    \vv_\ell +\int_{\rho_\ell}^\brho \frac{1}{s}\sqrt{p'(s)} \, ds, \qquad\qquad  0<\rho_\ell\le\brho\;. 
\end{equation*} 
Now, for $\rho_\ell>0$ fixed and letting $\brho$ tend to zero, there are only two cases to examine: either the solutions is a 2-shock 
and $\vv(\brho)$ is given by~\eqref{S2reply} or a 1-rarefaction and $\vv(\brho)$ is given by~\eqref{R1reply}.


First, for the case of the $1$-rarefaction, see \eqref{R1reply}, we observe that as $\brho\to0+$, the integral
\begin{equation}
\label{intinfty}-\int_{\rho_\ell}^\brho \frac{1}{s}\sqrt{p'(s)} ds\to\int_0^{\rho_\ell}\frac{\alpha}{s} ds 
\end{equation}
does not converge for $p(s)=\alpha^2 s$, i.e $\gamma=1$. As a consequence, the speed of the rarefaction becomes infinite. 
However, $\mm(\brho)$ tends to $0$ since $\vv(\brho)\simeq -\ln\brho$.

On the other hand, in the case of a 2-shock with $\vv(\brho)$ given by~\eqref{S2reply}, we see from \eqref{RH1} that
as $\brho\to0+$, 
the speed becomes $\sigma=\vv_\ell$. 
Then, from~\eqref{S2reply}, we observe that 
\begin{equation}\label{vinfty} \lim_{\brho\to0+} \vv(\brho)=-\infty,\end{equation}
while again
\begin{equation}\lim_{\brho\to0+} \mm(\brho)=\lim_{\brho\to0+} \brho\, \vv(\brho)=0\;. \end{equation}
The last follows from \begin{equation} \vv(\brho) \simeq -\sqrt{\dfrac{p(\rho_\ell)\rho_\ell}{\brho\,\rho_\ell}}=
-\frac{\sqrt{p(\rho_\ell)}}{\sqrt{\brho}}\to-\infty\,.\end{equation}
\\
Moreover, as $\brho\to0+$
\begin{equation}\label{RHenergyterms}
    -\brho \, \vv^2(\brho)+p(\rho_\ell)\to0\;.
\end{equation}
Hence, taking the limit on each side of~\eqref{RH2}, we see that 
\begin{equation*}
\sigma(\rho_\ell \vv_\ell-\brho\, \bvv)=\rho_\ell \vv_\ell^2-\brho\, \bvv^2+p(\rho_\ell)-p(\brho) \qquad \xrightarrow{\brho\to0+}
     \qquad \sigma\rho_\ell\vv_\ell=\rho_\ell\vv_\ell^2
\end{equation*}
which holds true since $\sigma=\vv_\ell$. Thus, the Rankine-Hugoniot conditions hold true in the limit as $\brho\to0+$
and they converge to a limit which is meaningful for the limiting solution 
 \begin{equation}\label{def:RP-sol-tilde-rho-m}
(\widetilde \rho,\widetilde \mm)(x,t):=\left\{\begin{array}{ll}
(\rho_\ell,\mm_\ell
)\,, &\quad x< t \vv_\ell  \\
(0,0)\,,&\quad x>t \vv_\ell\,.
\end{array}
\right.
\end{equation}
 
For our problem capturing the flocking behavior, we consider as admissible the solution being a $2$-shock and hence, 
satisfying the boundary condition~\eqref{ad-hoc vacuum}. 
We motivate this admissibility criterion by the fact that the speed $\sigma$ remains bounded, in contrast with the choice of the rarefaction 
as explained above. This is due to the choice of the pressure $p(\rho)$ to be a linear function of $\rho$. To further motivate this
admissibility criterion, we bring up the following simple case: If the initial data are constant on their support $I_0$, it is natural to
consider the solution as the initial data simply translated with the constant velocity $\vv$ 
(the value of $\vv_0$ in the support; 
here denoted by the $\vv_\ell$ on the left of $b_0$) that is represented by the choice of a shock; see~\eqref{eq:elem-solutions}. 
Thus the $1$-rarefaction solution is excluded. Similarly, at $x=a_0$, the Riemann problem with vacuum is solved by a $1$-shock. 
In other words, we choose the shock solution at both edges to satisfy condition~\eqref{ad-hoc vacuum} since the goal is to construct a weak solution having finite support. 

It is interesting to mention that in Liu-Smoller\cite{LS80}, the authors deal with a similar issue but in the context of gas dynamics that is different from flocking. 
More precisely, in Ref.~\cite{LS80} the shock solution is excluded in gas dynamics when the authors reach \eqref{vinfty} 
(see the beginning of Sect.2 in Ref.~\cite{LS80}) 
while they consider the variables $(\rho,\vv)$ as their states, in contrast to $(\rho,\mm)$, although the Rankine-Hugoniot conditions hold true in the $(\rho,\mm)$ variables. At the same time, they assume that 
\begin{equation}\label{intg=1infty}
\int_0^{\rho_\ell}\frac{1}{s}\sqrt{p'(s)} ds<\infty\,,
\end{equation}
which is satisfied by $\gamma>1$, in contrast to our case~\eqref{intinfty}. Under the assumption \eqref{intg=1infty}, the rarefaction curve $R_1$ intersects the $\vv-$axis in the $\rho-\vv$ plane. 
This immediately provides a rarefaction wave fan with the leading front having a finite speed and the rarefaction wave becomes the Riemann solution for the problem in Ref.~\cite{LS80}.


\begin{remark}
Let us clarify that the pressure is taken to have the form ~\eqref{gamma=1} as it turns out from the analysis in Ref.~\cite{KMT15}. In terms of our analysis, one could assume having isentropic pressure, i.e. $\gamma>1$, however several points should be adjusted in this work. The most critical ones are: (i) a limitation on the size of the total variation of the initial data should be added in contrast to having  large data in this paper; (ii) the Riemann solution for $\gamma>1$ around vacuum may consists of a rarefaction with finite speed (see Liu-Smoller\cite{LS80}) in contrast to the infinite speed. However, the rarefaction case may be chosen to be excluded again as here since it does not fit into the flocking behavior, reducing again the Riemann solution to a shock; (iii) the Lyapunov functionals incorporated in the proofs would not only involve the linear parts corresponding to the total variation but a combination of this with the quadratic Glimm-type functional. However, by taking into account these technical points, we expect that results similar to Theorems~\ref{Th-1} and~\ref{Th-2} hold for the isentropic case.
\end{remark}

Now we turn our attention to the proper notion of weak solution satisfied in the limit as $\brho\to0+$ by the solution obtained 
in the limit with the $2$-shock, that is, \eqref{def:RP-sol-tilde-rho-m}.
For $\brho \in (0,\rho_\ell)$, let
\begin{equation}\label{def:RP-sol-rho-m}
(\rho, \mm)(x,t)=\left\{\begin{array}{ll}
(\rho_\ell,\mm_\ell
)\,, & \quad x< \sigma t \\
(\brho, \mm(\brho))\,,&\quad  x>\sigma t
\end{array}
\right.
\end{equation}
be the solution to~\eqref{eq:system_Eulerian_K=1} with $M=0$ and \eqref{RH1-dataapprox} where $\mm(\brho) = \brho \, \vv(\brho)$ 
is obtained from \eqref{S2reply} and $\sigma$ given by \eqref{RH1}, \eqref{RH2}.     
As $\bar \rho\to 0$, $(\rho,\mm)$ converges to the function \eqref{def:RP-sol-tilde-rho-m}.
The weak formulation of  \eqref{eq:system_Eulerian_K=1}$_2$ with $M=0$, for $\brho>0$ reads as
\begin{equation*}
\iint \left\{\mm\,\phi_t + \left(\frac{\mm^2}{\rho} + p(\rho) \right)\phi_x \right\}\; dxdt=0
\end{equation*}
for all $\phi \in C_0^\infty(\R\times (0,+\infty))$. 
In the limit $\brho\to 0$, all the terms converge to the corresponding term of $(\widetilde \rho, \widetilde \mm)$, 
except $\mm^2/\rho$ which is undefined for $x > \vv_\ell t$. A direct computation shows that
\begin{equation*}
    \iint_{\{x> \sigma t \}} \frac{\mm^2}{\rho}\phi_x \; dxdt 
    ~~\xrightarrow{\bar \rho\to0+}~~ - p(\rho_\ell) \int_0^{+\infty} \phi( \vv_\ell t,t)\;dt \,.
\end{equation*}

Therefore, the limit function $(\widetilde \rho, \widetilde \mm)$ satisfies the integral identities \eqref{S1:rho-eq-phi2} and
\begin{equation*}
 \int_0^{+\infty} \left\{\int_{\R}\widetilde\mm \,\phi_t \right.  +  \left. p(\widetilde\rho)\phi_x \;dx + 
    \int_{x< t \vv_\ell}\left(\frac{\widetilde\mm^2}{\widetilde\rho}\right)\phi_x\;dx -p(\rho_\ell)\phi(v_\ell t,t) \right\}\; dt=0\,.
\end{equation*}
One can express this last identity by a generalized definition of the momentum that includes a \emph{Dirac delta} that is concentrated 
along the discontinuity $\gamma=\{(\vv_\ell t,t);\ t\ge 0\}$ as it is done with $\widehat \mm$ in~\eqref{def:m-hat} for the Cauchy problem.
We also recall that in the above computations of this subsection, we take $M=0$ and therefore, the source term is not present.

\subsection{Recasting into Lagrangian coordinates}\label{S2.1}
We perform the change of variables from Eulerian to Lagrangian coordinates
\begin{align*}
&(x,t) \mapsto \left(y=\chi(x,t),\t=t \right)\,,\\
&\chi(x,t) = \int_{-\infty}^x \rho(x',t)\,dx' \in [0,M]
\end{align*}
for solutions that conserve mass, i.e~\eqref{cons-of-mass}, and momentum, i.e.~\eqref{eq:zero-average-momentum},
with $M$ being the total mass (see \eqref{eq:def-M}). Since $\partial_x \chi= \rho\ge0$, then $x\mapsto  \chi(x,t)$ is non-decreasing 
and it maps $\R$ into $[0,M]$. Under the structure of the solution given in \eqref{solution-structure}, its pseudoinverse
$y\mapsto\chi^{-1}(y,t)$ is injective from $(0,M)$ to $I(t) = (a(t),b(t))$. In addition, the derivatives change as follows:
\begin{equation*}
\partial_\t = \partial_t + \vv \partial_x\,,\qquad \partial_y = \frac 1 \rho \partial_x = u \partial_x\,,\qquad u~\dot =~1/\rho\,,\qquad
v(y,t)\dot=\vv(x,t)\;.
\end{equation*}
Recalling the conservation of mass and momentum in \eqref{cons-of-mass} and \eqref{eq:zero-average-momentum}, 
we rewrite the two integral terms as follows:
\begin{align*}
&\int_\R \rho(x',t)\,dx' = \int_0^{M} \,dy' = M
\\
&\int_{I(t)} \mm(x',t)\,dx' +P_b(t)-P_a(t)= \int_0^{M} v(y',t) \,dy' +P_b(t)-P_a(t)= M_1 = 0 \,.
\end{align*}
Therefore we obtain the following system:
\begin{equation}\label{eq:system_Lagrangian}
\begin{cases}
\partial_\t u -  \partial_y  v  = 0, &\\
\partial_\t v  +  \partial_y (\alpha^2/u) =   - M v\,, & 
\\ 
\end{cases} 
\end{equation}
for the unknown $(u(y,t), v(y,t))$ while $(y,t)\in (0,M)\times [0,\infty)$\,.

\par\smallskip\noindent
Recalling \eqref{hyp-init_data}, the initial data $\rho_0$ is bounded and bounded away from 0; 
this ensures that the change of variable $$x\mapsto \chi(x,0)= \int_{a_0}^x \rho_0(x')\,dx'$$ 
is bi-Lipschitz continuous from $(a_0,b_0)$ to $(0,M)$\,.
Therefore, the conditions \eqref{hyp-init_data}, \eqref{hyp-init_data-v}, \eqref{eq:def-M1} with $M_1=0$,
given in terms of the initial data $(u_0,v_0)=(\rho_0^{-1},\vv_0)$, lead to:
\begin{equation}\label{eq:init-data-lagr}
(u_0,v_0)\in BV(0,M)\,,\qquad \essinf_{(0,M)} u_0 >0\,, \qquad  \int_0^M v_0(y)\, dy=0\,.
\end{equation}

Furthermore, at the boundaries $y=0$, $y=M$, we consider \emph{non-reflecting} boundary conditions for system~\eqref{eq:system_Lagrangian} of the following type:
\begin{equation}\label{eq:bc-lagrangian}
    \begin{aligned}
    y=0: & \mbox{ \emph{any} state $(u,v)(0+,t)\in (0,+\infty)\times \R$ is admissible} \\
    y=M: &  \mbox{ \emph{any} state $(u,v)(M-,t)\in (0,+\infty)\times \R$ is admissible\,. }
    \end{aligned}
\end{equation}
The choice of this type of boundary condition can be better illustrated in terms of the Eulerian variables $(\rho,\mm)$,
in the proof of Theorem~\ref{Th-1}, in the Section~\ref{Sect:3}.

We have thus reformulated our problem in Lagrangian variables and the aim is to construct entropy weak solution to the boundary problem~\eqref{eq:system_Lagrangian} with the data described above that conserve mass and momentum in order to hope for retrieving solutions to~\eqref{eq:system_Eulerian} or~\eqref{eq:system_Eulerian_M-M1} according to Theorem~\ref{Th-1}. As already mentioned in Introduction the equivalence of weak solutions between the systems is not immediate from Ref.~\cite{W87} due to the combination of the presence of vacuum and the finite mass.

\begin{theorem}\label{newthm}
Consider the initial-boundary problem~\eqref{eq:system_Lagrangian},~\eqref{eq:bc-lagrangian} with initial data $(u_0,v_0)$ satisfying \eqref{eq:init-data-lagr}. Then, there exists a function $(u,v):(0,M)\times [0,+\infty)\to  \R^2$ such that $t\mapsto (u,v)(\cdot,t) \in L^1\left(0,M\right)$ is continuous on $[0,+\infty)$, it satisfies 
\begin{equation}\label{eq:init-data-uv}
(u,v)(\cdot,0) = (u_0,v_0)    
\end{equation}
and it is a distributional solution to~\eqref{eq:system_Lagrangian} 
in the open set $(0,M)\times(0,+\infty)$\,.
Moreover, there exist positive constants $u_{inf}$, $u_{sup}$ such that for all $t>0$
\begin{equation}\label{eq:Linfty-bound-uv-new}
0<u_{inf}\le u(y,t) \le u_{sup}\,,\quad 
    |v(y,t)|\le \widetilde C_0
\qquad \mbox{for a.e.}\ y
\end{equation}   
for some constant $\widetilde C_0>0$. In addition, for every convex entropy $\tilde\eta$ for the system \eqref{eq:system_Lagrangian}, with corresponding entropy flux $\tilde q$, the following inequality holds in ${\DD}'((0,M)\times(0,+\infty))$:
\begin{equation*}
    \partial_t \tilde\eta(u, v) + \partial_x \tilde q(u, v)\le -M \tilde\eta_v v\,.
\end{equation*}
\end{theorem}

The proof of Theorem~\ref{newthm} is established at the end of Section~\ref{Sect:2}.

\subsection{Characteristic curves and solution to the Riemann problem}\label{S2.2}

To present the characteristic curves associated with system~\eqref{eq:system_Lagrangian}, let us first define the function
\begin{equation}\label{def:sigma}
\sigma(u) = - p(1/u) = -\alpha^2/u \,.  
\end{equation}
Then, characteristic speeds for system \eqref{eq:system_Lagrangian} are $\lambda=\pm \alpha/u$, and the ra\-re\-faction-shock curves, issued at a point $(u_\ell,v_\ell)$, are given by:

\smallskip\noindent
$\bullet$\quad curve of the \textbf{first} family:
\begin{equation} 
 \begin{cases}  
   u<u_\ell, \qquad v = 
		v_\ell - \sqrt{(u-u_\ell) \left(\sigma(u) - \sigma(u_\ell) \right) } = v_\ell - \alpha \left(  \sqrt{\frac{u_\ell}{u}}- \sqrt{\frac{u}{u_\ell}} \right) , \\  
   u>u_\ell, \qquad v =  
    v_\ell + \int_{u_\ell}^u \sqrt{\sigma'(s)}\, ds   = v_\ell + \alpha \ln \left(\frac u {u_\ell}\right);
 \end{cases}
    \label{RH-1}  
\end{equation} 

\medskip\noindent
$\bullet$\quad curve of the \textbf{second} family:
\begin{equation}   
 \begin{cases}
   u<u_\ell, \qquad v = 
   v_\ell - \int_{u_\ell}^u \sqrt{\s'(s)}\, ds   =  v_\ell - \alpha \ln \left(\frac u {u_\ell}\right), \\  
   u>u_\ell, \qquad v = 
		v_\ell - \sqrt{(u-u_\ell) \left(\sigma(u) - \sigma(u_\ell) \right) } = v_\ell -  \alpha \left(  \sqrt{\frac{u}{u_\ell}} - \sqrt{\frac{u_\ell}{u}} 
		 \right).  
 \end{cases} 
    \label{RH-2}  
\end{equation}  
\par\medskip\noindent

\begin{remark}(Propagation speed)\label{rem:2.1}
From \eqref{RH-1} and \eqref{RH-2}, the propagation speed $\Lambda$ of a shock depends only on $u_\ell$, $u$ and satisfies
\begin{equation*}
    |\Lambda| = \left|\frac{v(u)-v_\ell}{u-u_\ell}\right| = \frac{\alpha}{
    \sqrt{u u_\ell}}\,.
\end{equation*}
Similarly, let $(u_\ell,v_\ell)$, $(u,v)$ be connected by a rarefaction wave. By \eqref{RH-1}$_2$ and \eqref{RH-2}$_1$, the Rankine-Hugoniot speed $\Lambda$ 
corresponding to the (non-entropic) jump $(u_\ell,v_\ell)$, $(u,v)$ satisfies
$$
|\Lambda| = \left|\frac{v-v_\ell}{u-u_\ell}\right| =\alpha \frac{1}{\tilde u}\qquad \tilde u\in [\min\{u_\ell,u\}, \max\{u_\ell,u\} ]\,.
$$
\end{remark}

The two curves \eqref{RH-1}, \eqref{RH-2} can be conveniently parametrized, in a unified way, as follows: 
\begin{equation}\label{eq:lax13}
u \mapsto v(u; u_\ell,v_\ell) = v_\ell + 2\alpha h(\eps_j)\,,\qquad u>0\,,\quad j=1,2\;,
\end{equation}
where the 
$\eps_j$ 
are defined by
\begin{equation}\label{eq:strengths}
\eps_1=\frac{1}{2}\ln\left(\frac{u}{u_\ell}\right),\qquad \eps_2=\frac{1}{2}\ln\left(\frac{u_\ell}{u}\right)
\end{equation}
and the function $h$ is given by 
\begin{equation}\label{h}
h(\eps)= \begin{cases}
\eps& \mbox{ if } \eps \ge 0\,,\\
\sinh \eps& \mbox{ if } \eps < 0\,.
\end{cases}
\end{equation}
Notice that $\eps>0$ corresponds to the rarefaction curve, whereas $\eps<0$ corresponds to the shock curve. Also, assume that
the states $(u_\ell,v_\ell)$ and $(u,v)$ are connected by a simple wave of size $\eps$ with $j\in\{1,2\}$, that is \eqref{eq:lax13} holds. 
Then one has 
\begin{equation}\label{eq:strengths-deltav}
    |\eps|=\frac 12 \left|\ln(u) - \ln (u_\ell) \right|\,,\qquad |h(\eps)|=\frac 1 {2\alpha} |v-v_\ell|\,.
\end{equation}
We will refer to $\eps_j$, $j=1,2$ given in \eqref{eq:strengths}, as the \textit{strength} of a wave of the $j$-characteristic family\,. 

\smallskip
In the next proposition, stated for instance in Ref.~\cite{AC_SIMA_2008} (see also Ref.~\cite{Nishida68}), we solve the Riemann problem for the homogeneous system
\begin{equation}\label{eq:system-nosource-gamma1}
\partial_t u -  \partial_y  v  = 0, \qquad
\partial_t v  +  \partial_y  (\alpha^2/u) =  0
\end{equation}
and initial data
\begin{equation}\label{eq:Riemann-data}
(u,v)(x,0)=\begin{cases}
(u_\ell,v_\ell) & x<0\;,\\
(u_r,v_r) & x>0\,.
\end{cases}
\end{equation}

\begin{proposition}\label{prop:2.1} 
For any pair $(u_\ell,v_\ell)$, $(u_r,v_r)\in (0,+\infty)\times \R$, the Riemann problem  
\eqref{eq:system-nosource-gamma1}--\eqref{eq:Riemann-data} has a unique solution $(u,v)(x,t)\in (0,+\infty)\times \R$ that consists of simple Lax waves. 

Moreover, let $\eps_j$ be the strength of the $j=1$, $2$ wave. Then one has
\begin{align}\label{eq:one}
\eps_2 - \eps_1 &= \frac 12 \ln \left(\frac{u_\ell}{u_r} \right) =  \frac 12 \ln \left(\frac{p_r} {p_\ell}\right)\\
\label{eq:two}
h(\eps_1) + h(\eps_2) &= \frac{v_r - v_\ell}{2\alpha}
\end{align} 
where $p_{r,\ell} = p(u_{r,\ell}) = \alpha^2 / u_{r,\ell}$.

Finally, the following property holds,
\begin{equation}\label{eq:bound-on-size-of-Rp}
|\eps_1| + |\eps_2|\le \max\left\{ \frac 12 \left| \ln \left(\frac{u_r} {u_\ell}\right)\right|, \frac{|v_r - v_\ell|}{2\alpha} \right\}\,.
\end{equation}
\end{proposition}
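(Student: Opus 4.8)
The plan is to solve the Riemann problem by the classical Lax construction: follow the forward $1$-wave curve out of $(u_\ell,v_\ell)$ to an intermediate state $(u_m,v_m)$, then the forward $2$-wave curve out of $(u_m,v_m)$, and show that the latter meets $(u_r,v_r)$ for exactly one choice of $(u_m,v_m)$. Using the unified parametrization \eqref{eq:lax13}--\eqref{h}, the $1$-curve through $(u_\ell,v_\ell)$ is the set of states with $u_m=u_\ell e^{2\eps_1}$ and $v_m=v_\ell+2\alpha h(\eps_1)$, $\eps_1\in\R$, while $(u_r,v_r)$ lies on the $2$-curve through $(u_m,v_m)$ iff $u_m=u_r e^{2\eps_2}$ and $v_r=v_m+2\alpha h(\eps_2)$ for some $\eps_2\in\R$. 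Eliminating $(u_m,v_m)$ gives exactly \eqref{eq:one}, since $u_\ell e^{2\eps_1}=u_r e^{2\eps_2}$ forces $\eps_2-\eps_1=\tfrac12\ln(u_\ell/u_r)=\tfrac12\ln(p_r/p_\ell)$ (recall $p=\alpha^2/u$), and \eqref{eq:two}, since $v_r-v_\ell=2\alpha\big(h(\eps_1)+h(\eps_2)\big)$.

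It remains to show this pair has a unique solution $(\eps_1,\eps_2)$. Set $c_1:=\tfrac12\ln(u_\ell/u_r)$ and $c_2:=(v_r-v_\ell)/(2\alpha)$, so \eqref{eq:one} reads $\eps_2=\eps_1+c_1$; substituting into \eqref{eq:two} we must solve $F(\eps_1):=h(\eps_1)+h(\eps_1+c_1)=c_2$. Now $h$ from \eqref{h} is continuous and strictly increasing on $\R$ (indeed $h'(\eps)=1$ for $\eps\ge 0$ and $h'(\eps)=\cosh\eps>0$ for $\eps<0$, with matching one-sided derivatives at $0$), and $h(\eps)\to\pm\infty$ as $\eps\to\pm\infty$; hence $F$ is continuous, strictly increasing, and onto $\R$, so $F(\eps_1)=c_2$ has a unique root, determining $\eps_1$, then $\eps_2$, then $u_m=u_\ell e^{2\eps_1}>0$ and $v_m$. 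Each of the two resulting elementary waves is a Lax-admissible rarefaction when $\eps_j>0$ and a shock when $\eps_j<0$ by the construction of \eqref{RH-1}--\eqref{RH-2}; since all $1$-wave speeds are negative and all $2$-wave speeds positive (Remark~\ref{rem:2.1}), the two waves are separated in the $(x,t)$-plane and patch together to a genuine self-similar entropy solution with values in $(0,+\infty)\times\R$.

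Finally, for \eqref{eq:bound-on-size-of-Rp} I would argue by cases on the signs of $\eps_1,\eps_2$. If $\eps_1,\eps_2\ge 0$, then $|\eps_1|+|\eps_2|=h(\eps_1)+h(\eps_2)=(v_r-v_\ell)/(2\alpha)\ge 0$, which equals $|v_r-v_\ell|/(2\alpha)$. If $\eps_1,\eps_2\le 0$, then using $|\sinh\eps|\ge|\eps|$ we get $|\eps_1|+|\eps_2|\le|\sinh\eps_1|+|\sinh\eps_2|=|h(\eps_1)+h(\eps_2)|=|v_r-v_\ell|/(2\alpha)$. If the strengths have opposite signs, then $|\eps_1|+|\eps_2|=|\eps_2-\eps_1|=\tfrac12|\ln(u_\ell/u_r)|=\tfrac12|\ln(u_r/u_\ell)|$ by \eqref{eq:one}. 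In every case $|\eps_1|+|\eps_2|$ is bounded by the larger of $\tfrac12|\ln(u_r/u_\ell)|$ and $|v_r-v_\ell|/(2\alpha)$, which is \eqref{eq:bound-on-size-of-Rp}. The only slightly delicate points are checking that $h$ yields a $C^1$, strictly monotone, surjective parametrization so that $F$ is globally invertible (this is precisely what makes existence and uniqueness hold with \emph{no} smallness of $|u_r-u_\ell|$ or $|v_r-v_\ell|$, thanks to \eqref{gamma=1}), and keeping the sign bookkeeping straight in the case analysis together with the elementary inequality $|\sinh\eps|\ge|\eps|$; neither is a serious obstacle and the rest is a textbook Lax-type argument.
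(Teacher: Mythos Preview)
Your proof is correct and follows essentially the same approach as the paper. The derivation of \eqref{eq:one}--\eqref{eq:two} via the intermediate state and the case analysis for \eqref{eq:bound-on-size-of-Rp} (opposite signs $\Rightarrow$ use \eqref{eq:one}; same sign $\Rightarrow$ use \eqref{eq:two} together with $|\eps|\le |h(\eps)|$) match the paper's argument exactly; the only difference is that you supply an explicit existence/uniqueness argument via the monotonicity and surjectivity of $\eps_1\mapsto h(\eps_1)+h(\eps_1+c_1)$, whereas the paper simply cites this as known from \cite{AC_SIMA_2008,Nishida68}.
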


\begin{proof}
We report the proof of \eqref{eq:one}, \eqref{eq:two} for later use. If $(u^*,v^*)$ denotes the intermediate value of the solution 
to the Riemann problem, then 
$$
 \eps_1  = \frac{1}{2}\ln\left(\frac{u^*}{u_\ell}\right)\,,\qquad  \eps_2=\frac{1}{2}\ln\left(\frac{u^*}{u_r}\right)
$$
and therefore
$$
\eps_2 - \eps_1 = \frac{1}{2}\ln\left(\frac{u_\ell}{u_r}\right)\,,
$$
that proves \eqref{eq:one}\,. Regarding  \eqref{eq:two}, it is enough to apply \eqref{eq:lax13} and get
$$
 v^* = v_\ell + 2\alpha h(\eps_1)\,,\qquad v_r = v^*+ 2\alpha h(\eps_2)\,.
$$
In conclusion we find that
$$
v_r - v_\ell =  (v_r - v^*) + (v^* - v_\ell) = 2\alpha h(\eps_1) + 2\alpha h(\eps_2)
$$
and \eqref{eq:two} follows\,.

Finally, to prove \eqref{eq:bound-on-size-of-Rp}, we observe that:

\smallskip
- if $\eps_1\eps_2\le0$ then $|\eps_1| + |\eps_2| = |\eps_2 - \eps_1|$ and \eqref{eq:bound-on-size-of-Rp} follows thanks to \eqref{eq:one};

\smallskip

- if $\eps_1\eps_2 >0$, instead, we use \eqref{eq:two} and the fact that $|x|\le h(x)$ for all $x\in\R$.\\
The proof is complete\,.
\end{proof}


%
\Section{Global existence of weak solutions}\label{Sect:2}

In the following subsections, we prepare the ground to prove Theorem~\ref{newthm}. More precisely, in Subsection~\ref{S2.3}, we construct approximate solutions to system~\eqref{eq:system_Eulerian_K=1} using the 
front-tracking algorithm in conjunction with the operator splitting. Then in the following Subsections \ref{S2.4}--\ref{subsec:vert-traces}, we introduce the Lyapunov functionals that allow us to obtain uniform bounds on the total variation and also estimate the vertical traces. We conclude 
in Subsection~\ref{S2.8} with the convergence of the approximate solutions. The analysis is employed in Section~\ref{Sect:3} towards the proof of Theorem~\ref{Th-1}.

\subsection{Approximate solutions}\label{S2.3}
Here, we construct approximate solutions $(u^\nu, v^\nu)$, $\nu\in\mathbb{N}$, to the initial-boundary value problem \eqref{eq:system_Lagrangian}\,. 
The construction is the same as in Ref.~\cite{AmadoriGuerra01} and follows the standard application of the operator splitting scheme in conjunction 
with the front tracking algorithm, together with the adaptation to the non-reflecting boundary conditions at $y=0$, $y=M$. 
Below we describe the construction in three steps: 

\medskip
\paragraph{{\bf Step 1.}}\quad Choose approximate initial data $(u^\nu_0, v^\nu_0)$, $\nu\in\N$, which are piecewise constant and  satisfy
\begin{equation*}
\tv \left(u^\nu_0, v^\nu_0\right) \le \tv \left(u_0, v_0\right)\,,\qquad \| \left(u^\nu_0, v^\nu_0\right) -  \left(u_0, v_0\right)\|_\infty \le \frac 1 \nu\,.
\end{equation*}
For later use, define the approximate states at the boundaries
\begin{equation}\label{def:u0-nu_uM-nu}
\tilde{u}_0^\nu:=\displaystyle\lim_{y\to0+}u^\nu(y,0)\qquad 
\tilde{u}_M^\nu:=\displaystyle\lim_{y\to M-}u^\nu(y,0)
\end{equation}
and
\begin{equation}\label{def:v0-nu}
    \tilde{v}_0^\nu :=\displaystyle\lim_{y\to0+}v^\nu(y,0)\,.
\end{equation}
Observe that, by construction, $\tilde{u}_0^\nu$ and $\tilde{u}_M^\nu$ are both positive and, as 
${\nu\to\infty}$,
\begin{equation}\label{eq:limits-init-data}
\tilde{u}_0^\nu \to u_0(0+)\,,\qquad \tilde{u}_M^\nu \to  u_0(M-)\,.
\end{equation}

As a consequence of \eqref{eq:init-data-lagr}$_3$, one has that
\begin{equation*}
\left|\int_0^M v^\nu_0(y)\,dy\right| \le \frac M\nu \to 0 \qquad \nu\to\infty\,.
\end{equation*}

\medskip
\paragraph{{\bf Step 2.}}\quad Fix a time step $\DT=\DT_\nu>0$ and set $t^n=n\DT$, $n=0,1,2,\dots$. Then, fix also a parameter $\eta=\eta_\nu>0$, that controls: 
\begin{itemize}
\item the size of rarefaction fronts, and 
\item the errors in the speeds of shocks and rarefaction fronts.
\end{itemize}
Allow now both parameters $\DT_\nu$, $\eta_\nu$ to converge to 0 as $\nu\to\infty$.

\medskip
\paragraph{{\bf Step 3.}}\quad The approximate solution $(u^\nu, v^\nu)$ of~\eqref{eq:system_Lagrangian} is obtained as follows:
\begin{enumerate}
    \item[(i)] On each time interval $[t^{n-1},t^n)$, the approximate solution $(u^\nu, v^\nu)$ is the $\eta$-front tracking approximate solution to the homogeneous system of~\eqref{eq:system_Lagrangian} with initial data $(u^\nu(y,t^{n-1}+), v^\nu(y,t^{n-1}+))$. We recall that $\eta$-front tracking approximate solutions (see Def. 7.1 in Ref.~\cite{Bressan_Book}) are piecewise constant functions with discontinuities along finitely many lines in the $(t,y)$ half-plane and interactions between two incoming fronts. Indeed, by modifying the wave speeds slightly, by a quantity less than $\eta$, we can assume that there are only interactions of two incoming fronts. 
    \par\noindent
    To give more details of this construction, solve the Riemann problems at $t=t^{n-1}$ around each discontinuity point of the initial data $(u^\nu(y,t^{n-1}+), v^\nu(y,t^{n-1}+))$. Then, retain shocks as obtained in the Riemann solution, but approximate rarefactions by fans of fronts each one of them consisting of  two constant states connected by a single discontinuity of strength less than $\eta$. More precisely, each rarefaction of strength $\eps$ is approximated by $N$ rarefaction fronts, with $N=[\eps/\eta]+1$, each one of strength $\eps/N<\eta$ and with speed to be equal to the characteristic speed of the right state.  
   Then prolong the approximate solution until some wave fronts interact at a point, or some wave reaches one of the boundaries. 
    \par\noindent
  - At the interaction, there are exactly two incoming fronts as mentioned and the approximate solution is prolonged by solving the Riemann problem and approximating rarefactions, if they arise in the solution, as just described. Applying this Riemann solver at interaction times, obtain the approximate solution for $t^{n-1}\le t<t^n$.
   \par\noindent
  - At times $t=\tau$ that a front meets the boundaries, i.e. $y=0$ or $y=M$, then that wave front just exits the domain 
and it does not play a role in the further construction for $t>\tau$. In other words, we say that the boundary absorbs the wave fronts when the fronts reach the boundary. 
  
    \item[(ii)]  At the time $t=t^n$, apply the operator splitting technique to system~\eqref{eq:system_Lagrangian} and define
\begin{equation}\label{eq:u-v_fractional-step}
u^\nu(y,t^n+) = u^\nu(y,t^n-)\,,\qquad  v^\nu(y,t^n+) = v^\nu(y,t^n-) \left(1-M\DT\right)
\end{equation}
taking into account the source term. As in (i), by changing possibly the speed of the fronts, 
there is no interaction of fronts at the times steps $t^n$. 
\item[(iii)] Proceed by iteration of (i) and (ii), with $n\in\mathbb{N}$.
\end{enumerate}

In this way, we obtain the approximate solution as long as the iteration at Step 3 can be carried out. Actually, one can continue with this construction 
as long as the interactions do not accumulate. Let us mention that, by Proposition~\ref{prop:2.1}, we can always solve the Riemann problem around each interaction, 
even if the initial data are arbitrarily large.

It should also be noted that, in general when applying the Riemann solver, except of shocks and rarefaction fronts, there are also the so-called non-physical fronts. 
Here, we adopt the simplified version of front tracking algorithm for $2\times 2$ systems that does not require the presence of non-physical fronts as developed in Ref.~\cite{AmadoriGuerra01}.

\begin{remark}(Wave-fronts at the boundaries)\label{rem:boundary-fronts}\rm \quad
At time $t$, not only the fronts within the interval $(0,M)$ exist, but also the fronts that reached the boundary $y=0,M$ at times $\tau$, $\tau\le t$. 
These fronts that reached the boundary at $\tau$, $\tau\le t$, exist for all times greater than the exit time $\tau$, 
continue having the strength $\eps$ with which they touched the boundary at $\tau$ and are considered as standby fronts. 

Another way to interpret them is to fix a constant wave speed $\hat \lambda>0$, and proceed as follows: when a front 
reaches the boundary $y=0$ at time $\tau$, then prolong it on $y<0$ with speed $-\hat \lambda$, and similarly if a front 
reaches $y=M$, then prolong it on $y>M$  with constant speed $\hat \lambda$. 
\end{remark}

\begin{remark}\label{Rmk:2.3}
For the special case that the initial data are  
$$
\rho_0(x)=\bar\rho>0,\qquad \mm_0(x)= \vv_0(x)= 0,\qquad x\in[a_0,b_0]
$$
with $\bar\rho>0$ a positive constant, we have in Lagrangian coordinates
$$
u_0(y)=\frac{1}{\bar\rho},\qquad v_0(y)=0,\qquad y\in[0,M]\;.
$$
Note that from~\eqref{eq:def-of-m}, the initial bulk  is $q=0$ in this case and also the approximate solution remains constant for all times, i.e. $u^\nu(y,t)=\frac{1}{\bar\rho}$, $v^\nu(y,t)=0$, for all $t$ and $y\in[0,M]$.
\end{remark}

\subsection{The linear functionals}\label{S2.4} Here, we define Lyapunov 
functionals that assist in the proof of global existence to system~\eqref{eq:system_Lagrangian}.
From now on and until Lemma~\ref{Lemmauinfsup}, the analysis will concern any given approximate solution $(u^\nu,v^\nu)$ with $\nu$ fixed, and we will omit 
the $\nu$ dependence for simplicity. 

Fix a time $t$ at which no wave interaction occurs and different from times steps $t^n$. Let $\mathcal{J}(t)$ be the set of fronts that exist at time $t$ 
in the approximate solution constructed in Subsection~\ref{S2.3}. This consists of the fronts contained in $(0,M)$ together with the standby fronts that reached 
the boundary at times $\tau$, $\tau\le t$.

Next, let $\{y_j\}_1^N$ be the points in $(0,M)$ such that
\begin{equation}\label{eq:disc-points}
  0<y_1< y_2<\ldots< y_{N(t)}<M  
\end{equation}
and at those points 
$(u,v)(\cdot,t)$ is discontinuous, for some integer $N=N(t)$ depending on $t$. Also, let $\eps_j$ be the corresponding strength 
of the front located at $y_j$.  Then we define the \emph{total linear functional}
\begin{equation}\label{def:L}
L(t) = \sum_{\beta\in J(t)} |\eps_\beta|\,,
\end{equation}
and the \emph{inner linear functional}
\begin{equation}\label{def:Lin}
L_{in}(t) = \sum_{j=1}^{N(t)} |\eps_j|\,.
\end{equation}
The total linear functional $L(t)$ counts the total strength of the fronts that are present at time $t$ including those that already reached the boundary at $\tau$, $\tau\le t$,
and stay put retaining their strength unchanged, while the inner linear functional $L_{in}(t)$ counts only the total strength of the fronts that are present in $(0,M)$ at time $t$. 

We also define the \emph{outer linear functionals} $L_{0,out}(t)$ and $L_{M,out}(t)$, that are, the total strength of the standby fronts that reached the boundary 
$y=0$, $y=M$ respectively at times $\tau$, with $\tau\le t$. In fact, these functionals have initial value zero, i.e. $L_{0,out}(0+)=0= L_{M,out}(0+)$, and for $t>0$,
\begin{equation*}
L_{0,out}(t-) = \sum_{\tau<t} \Delta L_{0,out}(\tau)
\end{equation*}
with $\Delta L_{0,out}(\tau)=|\eps|$ if a front of strength $\eps$ reached the boundary $y=0$ at time $\tau$. A similar formula holds 
for $L_{M,out}(t-)$ along the boundary $y=M$. 
Then, immediately, we have
\begin{equation}\label{def:L-Lout}
L(t)=L_{in}(t)+L_{0,out}(t) + L_{M,out}(t),\quad \forall t\,.
\end{equation}


In next lemma, we show the relation between $L_{in}$ and the total variation in $x$ of the approximate solution.

\begin{lemma}\label{prop:equivalence-Lin}
The following identities hold,
\begin{equation}\label{eq:identity-tvlnu}
\frac 12 \tv \{\ln(u)(\cdot,t)\} =  \frac 12 \tv \{\ln(\sigma(u(\cdot,t))\} = L_{in}(t)\,,
\end{equation}
where $\sigma$ is defined at \eqref{def:sigma}, and
\begin{equation}\label{eq:tv-v}
\tv \{v(\cdot,t)\} \le 2\alpha \cosh(c)\, L_{in}(t)
\end{equation}
where $c=\max{|\eps|}$ is the maximum of the sizes of the waves on $(0,M)$ at time $t$.
\end{lemma}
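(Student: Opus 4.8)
The plan is to prove the two identities in \eqref{eq:identity-tvlnu} by direct summation over the fronts present in $(0,M)$, and then to derive the bound \eqref{eq:tv-v} from the parametrization of the wave curves. First I would fix a time $t$ at which no interaction occurs and which is not one of the splitting times $t^n$, so that $(u,v)(\cdot,t)$ is piecewise constant with jumps exactly at the points $y_1<\dots<y_{N(t)}$ of \eqref{eq:disc-points}. Writing $(u^-_j,v^-_j)$ and $(u^+_j,v^+_j)$ for the left and right states at $y_j$, each such jump is a single front of some family $k_j\in\{1,2\}$ and strength $\eps_j$, so by the definition \eqref{eq:strengths} of the strengths one has $|\eps_j|=\tfrac12|\ln(u^+_j)-\ln(u^-_j)|$, which is exactly \eqref{eq:strengths-deltav}$_1$. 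Since $\ln(u)(\cdot,t)$ is piecewise constant with these same jump points, $\tv\{\ln(u)(\cdot,t)\}=\sum_{j=1}^{N(t)}|\ln(u^+_j)-\ln(u^-_j)| = 2\sum_{j=1}^{N(t)}|\eps_j| = 2L_{in}(t)$, which gives the first identity after dividing by $2$.

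For the middle term, I would use the explicit form $\sigma(u)=-\alpha^2/u$ from \eqref{def:sigma}: then $\ln(-\sigma(u)) = \ln(\alpha^2) - \ln(u)$, so $\ln(\sigma(u))$ (up to the constant $\ln(\alpha^2)$ and the sign, which are irrelevant for total variation) has exactly the same jumps in absolute value as $\ln(u)$. Hence $\tv\{\ln(\sigma(u(\cdot,t)))\} = \tv\{\ln(u)(\cdot,t)\}$, closing the chain of equalities in \eqref{eq:identity-tvlnu}. One should note here that $u(\cdot,t)$ stays strictly positive along the approximate construction (this is implicit in the scheme and is made precise later, e.g. up to Lemma~\ref{Lemmauinfsup}), so all these logarithms are well defined.

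For \eqref{eq:tv-v}, I would again sum over the jump points: $\tv\{v(\cdot,t)\} = \sum_{j=1}^{N(t)} |v^+_j - v^-_j|$, and for each front \eqref{eq:strengths-deltav}$_2$ gives $|v^+_j - v^-_j| = 2\alpha\,|h(\eps_j)|$. From the definition \eqref{h} of $h$, one has $|h(\eps)|\le |\eps|\cosh(\eps)$ for all $\eps\in\R$: indeed for $\eps\ge0$, $|h(\eps)|=\eps\le\eps\cosh\eps$, and for $\eps<0$, $|h(\eps)|=|\sinh\eps|=|\eps|\,\bigl|\tfrac{\sinh\eps}{\eps}\bigr|\le |\eps|\cosh\eps$ since $|\sinh\eps|\le|\eps|\cosh\eps$. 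Therefore $|v^+_j-v^-_j| \le 2\alpha\cosh(\eps_j)\,|\eps_j| \le 2\alpha\cosh(c)\,|\eps_j|$ with $c=\max_j|\eps_j|$, and summing over $j$ yields $\tv\{v(\cdot,t)\} \le 2\alpha\cosh(c)\sum_j|\eps_j| = 2\alpha\cosh(c)\,L_{in}(t)$, as claimed.

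The only genuinely delicate point is the elementary inequality $|h(\eps)|\le|\eps|\cosh(\eps)$ on the shock branch — that is, $|\sinh\eps|\le|\eps|\cosh\eps$ — which follows from $\tanh|\eps|\le|\eps|$; everything else is bookkeeping of jumps of piecewise constant functions. I would state that inequality as a one-line lemma (or inline), and then the proof is a straightforward summation. Care should also be taken that the parametrization \eqref{eq:lax13}–\eqref{eq:strengths-deltav} applies to \emph{every} admissible front produced by the algorithm, including the rarefaction sub-fronts into which genuine rarefactions are split in Step 3(i); since each such sub-front is itself a simple wave of the form \eqref{eq:lax13} of small strength, this is automatic.
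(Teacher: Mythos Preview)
Your proof is correct and follows essentially the same route as the paper: both arguments reduce \eqref{eq:identity-tvlnu} to the identity $|\eps_j|=\tfrac12|\ln u^+_j-\ln u^-_j|$ from \eqref{eq:strengths-deltav}, and both derive \eqref{eq:tv-v} from $\tv\{v\}=2\alpha\sum_j|h(\eps_j)|$ together with the elementary bound $|\sinh x|\le |x|\cosh(c)$ for $|x|\le c$. Your remark that $\ln(\sigma(u))$ should be read as $\ln(-\sigma(u))=\ln(\alpha^2)-\ln(u)$ is a fair clarification of the notation, and the rest is indeed just bookkeeping over jumps of a piecewise constant function.
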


\begin{proof}
By recalling the identities in \eqref{eq:strengths-deltav} and the definition $\sigma(u)=-\alpha^2/u$, we deduce immediately that \eqref{eq:identity-tvlnu} holds, and that  

\begin{equation*}
    \tv \{v(\cdot,t)\} = 2\alpha \sum_{j=1}^{N(t)} |h(\eps_j)|\,.  
\end{equation*}
By the definition of the function $h$ in \eqref{h}, we use the elementary inequality $0\le \sinh(x)\le x \cosh(c) $ for $x\in[0,c]$, 
to find that
\begin{align*}
   \sum_{j=1}^{N(t)} |h(\eps_j)|\le \cosh(c)   \sum_{j=1}^{N(t)} |\eps_j| = \cosh(c) L_{in}(t)\,.
\end{align*}
This completes the proof of \eqref{eq:tv-v}.
\end{proof}

\subsection{Bounds on the total variation} 
Here, we establish bounds on the approximate solution valid for all times and independent of $\nu$ using the functionals already introduced. To study the time variation of the functionals, we refer to a result from Ref.~\cite{AmadoriGuerra01} on the change of a strength $\eps^-$ across a time step. For later convenience, we state and prove it under the current notation.

\begin{proposition}\label{S2:prop:estimate-time-step}
Let $\eps_2^-$ be the size of a 2-wave before a time step $t^n$, $n\ge 1$, and let $\eps_1^+$, $\eps_2^+$ be the sizes of the outgoing waves. 
\begin{enumerate}
\item[$(a)$] The following identities hold:
\begin{align}\label{eq:one-timestep}
\eps_2^+ - \eps_1^+ &= \eps_2^- 
\\
\label{eq:two-timestep}
h(\eps_1^+) + h(\eps_2^+) &= h(\eps_2^-)\left( 1 - M\DT \right)\,. 
\end{align} 
where $h$ is given at \eqref{h}.
\item[$(b)$] The following sign rules hold: $\eps_1^+ \eps_2^- <0$, $\eps_2^+\eps_2^->0$.
\end{enumerate}
A similar conclusion holds for any 1-wave of size $\eps_1^-$.
\end{proposition}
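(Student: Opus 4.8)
The plan is to exploit the fact that the fractional step~\eqref{eq:u-v_fractional-step} leaves $u$ unchanged and multiplies $v$ by $(1-M\DT)$, so that immediately after $t^n$ the data around an isolated incoming $2$-front form a new Riemann problem whose left and right states have the \emph{same} $u$-values as before the step and velocities rescaled by the factor $(1-M\DT)$. Part $(a)$ is then a direct application of Proposition~\ref{prop:2.1} to this new Riemann problem. Concretely, I would write the incoming $2$-wave as connecting $(u_\ell,v_\ell)$ to $(u_r,v_r)$; by the parametrization \eqref{eq:lax13}--\eqref{h} one has $\eps_2^-=\tfrac12\ln(u_\ell/u_r)$ and $v_r-v_\ell=2\alpha\,h(\eps_2^-)$. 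After the step the states become $(u_\ell,(1-M\DT)v_\ell)$ and $(u_r,(1-M\DT)v_r)$, which preserve the ratio $u_\ell/u_r$ and have velocity difference $(1-M\DT)(v_r-v_\ell)=2\alpha(1-M\DT)h(\eps_2^-)$. Substituting these into \eqref{eq:one} and \eqref{eq:two} gives precisely \eqref{eq:one-timestep} and \eqref{eq:two-timestep}.

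For part $(b)$ I would combine the two identities of $(a)$ with the elementary properties of $h$: it is strictly increasing on $\R$, $h(\eps)=\eps$ for $\eps\ge0$, and $h(\eps)=\sinh\eps<\eps<0$ for $\eps<0$; here one uses $0<1-M\DT<1$, which holds for $\nu$ large since $\DT_\nu\to0$. Consider first $\eps_2^->0$, so that $\eps_2^+=\eps_1^++\eps_2^-$ by \eqref{eq:one-timestep}. If $\eps_1^+\ge0$ then $h(\eps_1^+)+h(\eps_2^+)=2\eps_1^++\eps_2^-\ge\eps_2^->(1-M\DT)\eps_2^-$, contradicting \eqref{eq:two-timestep}; hence $\eps_1^+<0$. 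If moreover $\eps_2^+\le0$, then the left-hand side of \eqref{eq:two-timestep} is $<0$ while the right-hand side is $>0$, so in fact $\eps_2^+>0$. Consider next $\eps_2^-<0$. If $\eps_1^+\le0$ then $\eps_1^++\eps_2^-<0$ and, by monotonicity of $\sinh$, $h(\eps_1^+)+h(\eps_2^+)=\sinh\eps_1^++\sinh(\eps_1^++\eps_2^-)\le\sinh\eps_2^-<(1-M\DT)\sinh\eps_2^-$, again contradicting \eqref{eq:two-timestep}; hence $\eps_1^+>0$. If moreover $\eps_2^+\ge0$ then the left-hand side of \eqref{eq:two-timestep} is $>0$ and the right-hand side is $<0$, so $\eps_2^+<0$. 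In both cases this yields $\eps_1^+\eps_2^-<0$ and $\eps_2^+\eps_2^->0$.

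The case of an incoming $1$-wave $\eps_1^-$ is entirely symmetric: writing the incoming front as connecting $(u_\ell,v_\ell)$ to $(u_r,v_r)$, one has $\eps_1^-=\tfrac12\ln(u_r/u_\ell)$ and $v_r-v_\ell=2\alpha\,h(\eps_1^-)$, and applying Proposition~\ref{prop:2.1} to the rescaled data gives $\eps_1^+-\eps_2^+=\eps_1^-$, $h(\eps_1^+)+h(\eps_2^+)=(1-M\DT)h(\eps_1^-)$, together with $\eps_2^+\eps_1^-<0$ and $\eps_1^+\eps_1^->0$ by the same argument with the roles of the two families interchanged.

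I expect no genuine obstacle here: the content of $(a)$ is just Proposition~\ref{prop:2.1} applied to rescaled Riemann data, and the only mildly delicate point is the sign bookkeeping in $(b)$, where one must track which branch of $h$ is active and use the strict monotonicity of $\sinh$ together with $1-M\DT\in(0,1)$.
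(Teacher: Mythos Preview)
Your proof is correct. Part~$(a)$ is essentially identical to the paper's argument: both observe that the time step preserves the $u$-values and rescales the $v$-jump by $1-M\DT$, then read off \eqref{eq:one-timestep}--\eqref{eq:two-timestep} from Proposition~\ref{prop:2.1}.

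For part~$(b)$ the routes differ. The paper sets $x=\eps_2^-$, $y=\eps_1^+$, introduces
\[
F(x,y,s)=h(y)+h(x+y)-h(x)(1-Ms),
\]
notes that $\partial_y F>0$ with $F\to\pm\infty$ as $y\to\pm\infty$, and evaluates $xF(x,0,s)>0$ and $xF(x,-x,s)<0$ to trap the unique root $y(x,s)$ strictly between $0$ and $-x$. You instead do a direct case analysis on the sign of $\eps_2^-$ and the hypothetical sign of $\eps_1^+$, using the piecewise form of $h$ together with $0<1-M\DT<1$ to reach a contradiction in each forbidden case. Your argument is self-contained and slightly more elementary; the paper's implicit-function framing is a bit more structural and has the practical advantage that the same function $F$ and the bound $-x<y(x,s)<0$ (for $x>0$) are immediately reused in the proof of Proposition~\ref{prop:estimate-time-step}, where one needs quantitative control of $|\eps_1^+|$ in terms of $M\DT\,|\eps_2^-|$.
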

The proof is deferred to Appendix~\ref{subsec:app1}.

In addition we need to prove an estimate on the size of the reflected waves at the time step, that will depend on the parameters $q$ and $M$ given at~\eqref{eq:def-of-m}, and~\eqref{eq:def-M}, respectively. 
This is stated in the next proposition, that refines the statement of Proposition~\ref{S2:prop:estimate-time-step}.

\begin{proposition}\label{prop:estimate-time-step}
Let $q>0$ and $\eps_2^-$ the size of a 2-wave before a time step $t^n$,  $n\ge 1$, with $|\eps_2^-|\le q$. Suppose that $\eps_1^+$, $\eps_2^+$ 
are the sizes of the outgoing waves. Then
\begin{align}\label{time-step-reflected-wave}
c_1(q) &\le \frac{|\eps_1^+|}{{M}\DT |\eps_2^-| }  \le C_1^\pm(q) 
\end{align}
where
\begin{equation*}
c_1(q)= (1+\cosh(q))^{-1}\,,\qquad C_1^\pm(q) = 
\begin{cases} 
\frac{1} {2}  & \mbox{ if }\eps_2^->0\\
\frac{\cosh(q)} {2} & \mbox{ if }\eps_2^-<0\,.
\end{cases}
\end{equation*}
\smallskip
A similar conclusion holds for any 1-wave of size $\eps_1^-$.
\end{proposition}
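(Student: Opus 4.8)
The plan is to start from Proposition~\ref{S2:prop:estimate-time-step} and reduce, separately in the two sign cases, the asserted inequality to an elementary scalar estimate for $\sinh$. Set $\lambda:=M\DT$; one may assume $M\DT\le 1$ since $\DT_\nu\to0$. Recall from Proposition~\ref{S2:prop:estimate-time-step} that the outgoing sizes satisfy $\eps_2^+=\eps_1^++\eps_2^-$ and $h(\eps_1^+)+h(\eps_2^+)=h(\eps_2^-)(1-\lambda)$, together with the sign rules $\eps_1^+\eps_2^-<0$ and $\eps_2^+\eps_2^->0$. I would treat only the 2-wave $\eps_2^-$, since the 1-wave statement follows verbatim after the reflection $y\mapsto -y$ that interchanges the two characteristic families.

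First I would consider $\eps_2^->0$ (an incoming rarefaction). Then $\eps_1^+<0<\eps_2^+$, so in the relation $h$ acts as the identity on $\eps_2^\pm$ and as $\sinh$ on $\eps_1^+$; eliminating $\eps_2^+$ and setting $x:=-\eps_1^+=|\eps_1^+|>0$ yields the single identity
\begin{equation*}
\sinh x+x=\lambda\,|\eps_2^-|\,.
\end{equation*}
Since $\sinh x\ge x$, the left-hand side is at least $2x$, giving $|\eps_1^+|/(\lambda|\eps_2^-|)\le\tfrac12$, which is $C_1^\pm(q)$ in this case. Moreover $2x\le\lambda|\eps_2^-|\le q$, so $x\le q$ and hence $\sinh x=\int_0^x\cosh s\,ds\le x\cosh x\le x\cosh q$, whence the left-hand side is at most $(1+\cosh q)x$; this gives $|\eps_1^+|/(\lambda|\eps_2^-|)\ge(1+\cosh q)^{-1}=c_1(q)$.

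Then I would consider $\eps_2^-<0$ (an incoming shock). Here $\eps_1^+>0$ and $\eps_2^+<0$; writing $a:=|\eps_1^+|$ and $b:=|\eps_2^-|\in(0,q]$ one has $\eps_2^+=a-b$ with $a<b$, and the relation becomes
\begin{equation*}
a+\big(\sinh b-\sinh(b-a)\big)=\lambda\,\sinh b\,.
\end{equation*}
The key point is that $\sinh b-\sinh(b-a)=\int_{b-a}^{b}\cosh s\,ds$, which, since $0\le b-a<b\le q$, lies between $a$ and $a\cosh q$; likewise $b\le\sinh b\le b\cosh b\le b\cosh q$. Substituting the first bound into the identity gives $2a\le\lambda\sinh b\le(1+\cosh q)\,a$, i.e. $a/(\lambda\sinh b)\in\big[(1+\cosh q)^{-1},\tfrac12\big]$, and multiplying by $\sinh b/b\in[1,\cosh q]$ yields exactly $c_1(q)\le|\eps_1^+|/(\lambda|\eps_2^-|)\le\cosh q/2=C_1^-(q)$, as claimed.

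The main obstacle is the shock case: there the term $h(\eps_2^+)=\sinh(\eps_1^++\eps_2^-)$ is genuinely nonlinear in the unknown $\eps_1^+$, and one must exploit the integral (mean-value) form of $\sinh b-\sinh(b-a)$ to trap it between $a$ and $a\cosh q$ — which is precisely where the hypothesis $|\eps_2^-|\le q$ is used, keeping $\cosh$ bounded by $\cosh q$ on the relevant interval — and then pass between $\sinh b$ and $b$ to obtain the stated constants. In the rarefaction case the corresponding identity is linear apart from the single term $\sinh x$, so the estimates there are immediate.
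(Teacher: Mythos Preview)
Your proof is correct and follows essentially the same route as the paper's: in each sign case you derive the same scalar identity (matching the paper's \eqref{eq:implicit-time-step-x>0} and \eqref{eq:implicit-time-step-x<0}) and then use the mean-value/integral form of $\sinh$ to trap the nonlinear term between multiples of the unknown with constants $2$ and $1+\cosh q$. The only minor difference is how you ensure $|\eps_1^+|\le q$ in the rarefaction case: you use $2|\eps_1^+|\le M\DT\,|\eps_2^-|\le q$ via the extra assumption $M\DT\le 1$, whereas the paper gets $|\eps_1^+|<|\eps_2^-|\le q$ directly from the sign rule $-x<y<0$ established in Proposition~\ref{S2:prop:estimate-time-step}, which avoids any hypothesis on $\DT$.
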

The proof is deferred to Appendix~\ref{subsec:app2}.

\par\smallskip\noindent
\textbf{Notation.}\quad Here we introduce a notation that will be extensively used in the rest of the section. Given a function of time $t\mapsto G(t)$,
by $\Delta G$ we denote the change of the function $G$ around $t$, i.e. $\Delta G (t)=G(t+)-G(t-)$. Similarly, given a function of space $y\mapsto g(y)$,
we denote by $\Delta g(y)$ the variation of $g$ at $y$ as follows, $\Delta g(y)= g(y+)-g(y-)$.

\smallskip
The next lemma provides a uniform bound on the linear functional $L(t)$, 
and hence on the total variation of the approximate solutions, 
as soon as the approximate solution is defined.

\begin{lemma}\label{lem:bounds-on-bv}
  Suppose that the approximate solution $(u^\nu, v^\nu)=(u, v)$ is defined for $t\in[0,T]$. Then $L_{in}(t)$, $L(t)$ are 
  non-increasing in time, and
  \begin{equation}\label{LLin-decreases}
L_{in}(t)\le L(t)\le L(0+) \le  q\qquad
\forall\, t\in[0,T]\,,
\end{equation}
where $q$ is defined at \eqref{eq:def-of-m}.

Moreover, there exists a constant $C_1>0$ independent of $\nu$ and $t$ such that
\begin{equation}\label{eq:unif-bound-tv-v}
\tv \{v(\cdot,t)\} \le C_1 \,, \qquad t\in[0,T]\,.
\end{equation}
\end{lemma}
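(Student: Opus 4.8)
\textbf{Proof proposal for Lemma~\ref{lem:bounds-on-bv}.}

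The plan is to track the evolution of $L(t)$ through the two kinds of events that occur in the approximate solution: wave-front interactions inside $(0,M)$ during the intervals $[t^{n-1},t^n)$, and the fractional step at the times $t^n$. First, within a time interval between two steps, the approximate solution solves the homogeneous system, so I would invoke the classical interaction estimates for the isothermal system (as in \cite{AmadoriGuerra01, AC_SIMA_2008}): when two incoming fronts of sizes $\eps'$, $\eps''$ interact, the total strength does not increase, in fact $|\eps_1^+| + |\eps_2^+| \le |\eps'| + |\eps''|$ by Proposition~\ref{prop:2.1} (since the outgoing strengths satisfy \eqref{eq:one}--\eqref{eq:two} and $|x|\le h(x)$). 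Hence $L_{in}$ is non-increasing across interior interactions. When a front reaches the boundary $y=0$ or $y=M$, its strength is simply transferred from the $L_{in}$ ledger to $L_{0,out}$ or $L_{M,out}$, so by \eqref{def:L-Lout} the total $L(t)$ is unchanged. Thus, between the times $t^n$, both $L(t)$ and $L_{in}(t)$ are non-increasing, and $L(t)$ is actually constant except for interior cancellation.

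Second, and this is the crux, I would estimate $\Delta L(t^n)$ at a fractional step. By the operator-splitting formula \eqref{eq:u-v_fractional-step}, each wave of size $\eps^-$ present in $(0,M)$ at $t^n-$ is replaced, via a Riemann-type resolution with the damped velocity jump, by two outgoing waves of sizes $\eps_1^+$, $\eps_2^+$; standby fronts at the boundary are untouched. Proposition~\ref{S2:prop:estimate-time-step} gives $\eps_2^+ - \eps_1^+ = \eps_2^-$ together with the sign rule $\eps_1^+\eps_2^- < 0$, $\eps_2^+\eps_2^- > 0$, which forces $|\eps_2^+| = |\eps_2^-| - |\eps_1^+|$ and hence $|\eps_1^+| + |\eps_2^+| = |\eps_2^-|$. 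So at the level of a single wave the \emph{total strength is exactly conserved} across the step — no amplification — while a reflected wave of the opposite family of size $|\eps_1^+| \le C_1^\pm(q)\, M\DT\,|\eps_2^-|$ is produced (Proposition~\ref{prop:estimate-time-step}, whose hypothesis $|\eps_2^-|\le q$ will be available because $L_{in}(t^n-)\le q$ by the inductive bound). Summing over all interior waves, $\Delta L(t^n) = 0$, hence $L(t^n+) = L(t^n-)$, and combining with the monotonicity between steps we get $L(t)\le L(0+)$ for all $t$. The bound $L(0+)\le q$ then follows from Proposition~\ref{prop:2.1}: the initial data $(u_0^\nu,v_0^\nu)$ is piecewise constant, the Riemann problems solved at $t=0$ at each jump give outgoing strengths bounded, via \eqref{eq:bound-on-size-of-Rp}, by $\max\{\frac12|\Delta\ln u_0^\nu|,\frac1{2\alpha}|\Delta v_0^\nu|\}\le \frac12|\Delta\ln u_0^\nu| + \frac1{2\alpha}|\Delta v_0^\nu|$ at each point, and summing over all jumps gives $L(0+) \le \frac12\tv\{\ln u_0^\nu\} + \frac1{2\alpha}\tv\{v_0^\nu\} \le \frac12\tv\{\ln u_0\} + \frac1{2\alpha}\tv\{v_0\} = q$ using the TV-decreasing property of the approximate data and the definition \eqref{eq:def-of-m}; I should note that because rarefactions are split into fans of equal-sign fronts, this splitting does not change the sum of absolute strengths, so $L(0+)$ is unaffected. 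Since $L_{in}(t)\le L(t)$ trivially, \eqref{LLin-decreases} is established.

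Finally, \eqref{eq:unif-bound-tv-v} is immediate from \eqref{eq:tv-v} of Lemma~\ref{prop:equivalence-Lin}: $\tv\{v(\cdot,t)\}\le 2\alpha\cosh(c)\,L_{in}(t)$ with $c = \max|\eps|$ the maximal wave size on $(0,M)$; since every such $\eps$ satisfies $|\eps|\le L_{in}(t)\le q$, we have $c\le q$ uniformly in $\nu$ and $t$, so $\tv\{v(\cdot,t)\}\le 2\alpha\cosh(q)\,q =: C_1$, a constant depending only on $\alpha$ and $q$ (hence on the initial data), not on $\nu$ or $t$.

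The main obstacle I anticipate is the step-estimate bookkeeping: one must be careful that the fractional step \eqref{eq:u-v_fractional-step} is applied simultaneously to \emph{all} interior waves and that resolving the resulting (generally non-Riemann) piecewise-constant datum at $t^n+$ does not itself generate extra total strength beyond what Proposition~\ref{S2:prop:estimate-time-step} accounts for per wave — this is exactly where the exact identity $|\eps_1^+|+|\eps_2^+| = |\eps_2^-|$ (rather than a mere inequality with a constant $>1$) is essential, since otherwise the number of steps up to a fixed time would produce an exponentially growing bound. The role of the special pressure \eqref{gamma=1} is precisely to make this identity hold, as reflected in the affine form of \eqref{eq:two-timestep}.
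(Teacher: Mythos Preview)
Your proposal is correct and follows essentially the same approach as the paper's proof: both track $\Delta L$ and $\Delta L_{in}$ across interior interactions (using the Nishida-type identities for the isothermal system, cited in the paper as \cite{AmadoriGuerra01}), across fronts hitting the boundary (where strength passes from $L_{in}$ to $L_{out}$), and across the fractional step (where Proposition~\ref{S2:prop:estimate-time-step}\,(b) gives the exact conservation $|\eps_1^+|+|\eps_2^+|=|\eps_2^-|$); then both bound $L(0+)$ via \eqref{eq:bound-on-size-of-Rp} and deduce \eqref{eq:unif-bound-tv-v} from \eqref{eq:tv-v} with $c\le q$, obtaining $C_1=2\alpha q\cosh(q)$. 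The only cosmetic difference is that the paper explicitly separates same-family interactions (where $\Delta L_{in}\le 0$) from different-family interactions (where the fronts cross without change, so $\Delta L_{in}=0$), whereas you invoke a single inequality $|\eps_1^+|+|\eps_2^+|\le|\eps'|+|\eps''|$; your attribution of this to Proposition~\ref{prop:2.1} is slightly loose---the relevant identities are those in \eqref{tre-uno}--\eqref{tre-due} of the Appendix rather than the Riemann-problem estimate \eqref{eq:bound-on-size-of-Rp}---but the conclusion is the same.
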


\begin{proof}
Following the analysis in Ref.~\cite{AmadoriGuerra01}, both in between the time steps and at the time steps $t=t^n=n\DT$, one has
\begin{equation}\label{L-decreases}
\Delta L(t)\le 0 \qquad \Delta L_{in}(t)\le 0 \qquad \forall\, t >0
\end{equation}
as soon as the approximate solution is defined, that is, as soon as no accumulation of interaction points occur. In particular, 

$\bullet$ at interaction times when two waves of different families interact, then the fronts cross each other without changing their strengths and in such cases $\Delta L(t)=\Delta L_{in}(t)=0$; 

$\bullet$ at interaction times when two waves of the same family interact, then $\Delta L(t)=\Delta L_{in}(t)\le 0$. See also the forthcoming Remark~\ref{rem:L_in_xi=1}; 

$\bullet$ at the exit point $y=0$ or $y=M$, when a front reaches the boundary at $\tau$ and retains its strength $\eps$, but it transforms itself into a standby front, then $\Delta L(t)=0$, while $\Delta L_{in}(t)=-|\eps|$;

$\bullet$ last, at the time steps $t^n$, we use \eqref{eq:one-timestep} and {\it (b)} in Proposition~\ref{S2:prop:estimate-time-step} to get that
$$
|\eps_2^+| + |\eps_1^+| = |\eps_2^-|
$$
and hence that $\Delta L(t^n)=\Delta L_{in}(t^n)=0$. 

\smallskip
In view of the above, $L(t)$ and $L_{in}(t)$ are non-increasing, and it is clear by the definition that $L_{in}(t)\le L(t)$. 
At time $t=0+$, thanks to \eqref{eq:bound-on-size-of-Rp}, we can estimate $L(0+)$ and $L_{in}(0+)$ by
\begin{equation*}
L(0+)=L_{in}(0+) \le  \frac 12 \tv \{\ln(u_0)\} +  \frac 1{2\alpha} \tv \{v_0\} :=q
\end{equation*}
as in \eqref{eq:def-of-m}\,. This completes the proof of \eqref{LLin-decreases}. 

Finally we address \eqref{eq:unif-bound-tv-v}. By \eqref{LLin-decreases}, the size of any wave present at any $t>0$ is bounded by $q$. Then \eqref{eq:tv-v} is valid with $c=q$, that is
\begin{equation}\label{eq:tv-v_m}
\tv \{v(\cdot,t)\} \le 2\alpha \cosh(q)\, L_{in}(t)
\end{equation}
and therefore inequality \eqref{eq:unif-bound-tv-v} holds with $C_1 = 2\alpha q \cosh(q)$.
\end{proof}

\begin{remark}\label{rem:0}
By \eqref{def:L-Lout} and the monotonicity of $L(t)$ stated in Lemma~\ref{lem:bounds-on-bv}, we obtain 
\begin{equation*}
L_{in}(t)+L_{0,out}(t) + L_{M,out}(t) \le L_{in}(s)+L_{0,out}(s) + L_{M,out}(s)
\qquad \forall~t\ge s\,.
\end{equation*}
In particular, since $L_{in}(t)\ge 0$ and  $L_{0,out}$, $L_{M,out}$ are non-decreasing in time,
\begin{align*}
0\le\, & \underbrace{L_{0,out}(t) - L_{0,out}(s)}_{\ge 0} +  \underbrace{L_{M,out}(t) - L_{M,out}(s)}_{\ge 0}
      \le L_{in}(s) \qquad \forall~t\ge s\,.
\end{align*}
Proceeding as in Lemma~\ref{prop:equivalence-Lin}, we can obtain an estimate for the variation of\quad $t\to v(0+,t)$:
\begin{align*}
\tv\{ v(0+,\cdot); (s,t)\} &\le 2\alpha \cosh(q) \left(L_{0,out}(t) - L_{0,out}(s)\right)\\
&\le 2\alpha \cosh(q) L_{in}(s) \qquad \qquad \qquad \qquad \forall~t\ge s\,.
\end{align*}
In particular
\begin{equation}\label{eq:v-along-x=0}
\left| v(0+,t-) -  v(0+, s) \right|   \le  2\alpha \cosh(q) L_{in}(s)  \qquad \forall~t\ge s\,.
\end{equation}
\end{remark}

\begin{remark}\label{rem:1}
The a-priori uniform bound on the size of the waves will be a parameter in the subsequent analysis, based on Refs.~\cite{AC_SIMA_2008,ABCD_JEE_2015}, of the decay of the reflected waves.
\end{remark}

In the following lemma, we establish bounds on the approximate solutions, which are independent on $\nu$ and $t$. Note that the $\nu$ parameter appears as an index from here and on.

\begin{lemma}\label{Lemmauinfsup}
Having the initial data~\eqref{eq:init-data-lagr}, the approximate sequence $(u^\nu, v^\nu)$ to~\eqref{eq:system_Lagrangian} 
is defined for all times $t>0$, and the following holds.
\begin{itemize}
    \item[(i)] There exist $u^\nu_{inf}$ and $u^\nu_{sup}$ independent of 
$t$ such that
\begin{equation}\label{uinfsup}
0<u^\nu_{inf}\le u^\nu(y,t) \le u^\nu_{sup} \qquad \forall\, (y,t)\in(0,M)\times [0,+\infty)\,,
\end{equation}
and also, there exist $u_{inf}$ and $u_{sup}$ such that $0<u_{inf} \le u_{sup}$ and
\begin{equation}\label{eq:unif-bounds}
    \lim_{\nu\to\infty} u^\nu_{inf} = u_{inf}\,,\qquad
    \lim_{\nu\to\infty} u^\nu_{sup} = u_{sup}\,.
\end{equation}
\item[(ii)] There exists a constant $\widetilde C_0>0$ such that
\begin{equation}\label{eq:unif-bounds-v_nu}
    |v^\nu(y,t)|\le \widetilde C_0\qquad \forall\, y\,,\ t\,,\ \nu\,. 
\end{equation}
\end{itemize}
\end{lemma}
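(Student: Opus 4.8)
The plan is to establish the bound on $v^\nu$ first, then use it together with the TV estimates of Lemma~\ref{lem:bounds-on-bv} to control $u^\nu$, and finally observe that the bounds so obtained pass to the limit because they depend only on quantities that converge. For part (ii), I would control $v^\nu(y,t)$ via the average-zero property and the total variation bound: since $\int_0^M v_0^\nu\,dy \to 0$ by Step~1 and the damping step \eqref{eq:u-v_fractional-step} multiplies $v^\nu$ by $(1-M\DT)$ uniformly in $y$, the spatial average $\frac 1M\int_0^M v^\nu(y,t)\,dy$ stays bounded (it decays like $(1-M\DT)^{n}$ across time steps plus a small error, and in between time steps conservation of momentum for the homogeneous system keeps it constant). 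Hence there is a point $y_t$ where $|v^\nu(y_t,t)|$ is at most this bounded average, and then for any other $y$, $|v^\nu(y,t)| \le |v^\nu(y_t,t)| + \tv\{v^\nu(\cdot,t)\} \le |v^\nu(y_t,t)| + C_1$ by \eqref{eq:unif-bound-tv-v}. This gives \eqref{eq:unif-bounds-v_nu} with an explicit $\widetilde C_0$ independent of $\nu$ and $t$.

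For part (i), the key is the first identity in \eqref{eq:identity-tvlnu}: $\frac 12 \tv\{\ln u^\nu(\cdot,t)\} = L_{in}(t) \le q$ for all $t$, by Lemma~\ref{lem:bounds-on-bv}. Thus $\ln u^\nu(\cdot,t)$ has oscillation at most $2q$, so $\max_y u^\nu(y,t) / \min_y u^\nu(y,t) \le e^{2q}$ uniformly in $t$ and $\nu$. To turn this ratio bound into two-sided pointwise bounds I need one anchor; the natural choice is the boundary trace $u^\nu(0+,t)$ (or $u^\nu(M-,t)$), whose value is not directly controlled by the interior TV, so I would track it separately. Using the non-reflecting boundary condition, when a front of family $2$ of strength $\eps$ reaches $y=0$ it exits and the new boundary state changes $\ln u^\nu(0+,\cdot)$ by $2|\eps|$ (from \eqref{eq:strengths}); summing over all fronts that ever reach $y=0$ gives $|\ln u^\nu(0+,t) - \ln \tilde u_0^\nu| \le 2 L_{0,out}(t) \le 2q$ by Remark~\ref{rem:0} (and similarly at $y=M$). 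Since $\tilde u_0^\nu \to u_0(0+) > 0$ by \eqref{eq:limits-init-data}, the boundary trace is bounded above and below uniformly in $\nu$ and $t$; combining with the $e^{2q}$ oscillation bound yields \eqref{uinfsup} with $u^\nu_{inf}, u^\nu_{sup}$ independent of $t$.

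Finally, \eqref{eq:unif-bounds} follows because the explicit expressions for $u^\nu_{inf}$ and $u^\nu_{sup}$ obtained above are built from $\tilde u_0^\nu$, $\tilde u_M^\nu$, the essential infimum/supremum of $u_0^\nu$, and $q$; the first two converge to $u_0(0+)$, $u_0(M-)$, the TV approximation in Step~1 forces $\essinf u_0^\nu \to \essinf u_0 > 0$ and $\esssup u_0^\nu \to \esssup u_0 < \infty$, and $q$ is fixed. Hence the limits exist and define $u_{inf} \le u_{sup}$ with $u_{inf} > 0$. The statement that the approximate solution is defined for all $t > 0$ is a consequence of these uniform bounds: since $u^\nu$ stays in a fixed compact subinterval of $(0,+\infty)$ and $L(t)$ is non-increasing (Lemma~\ref{lem:bounds-on-bv}), the number of fronts and the interaction pattern cannot accumulate in finite time, so the front-tracking/operator-splitting construction of Subsection~\ref{S2.3} continues indefinitely. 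The main obstacle is the bookkeeping for the boundary trace $u^\nu(0+,t)$: one must verify carefully that every change in $\ln u^\nu(0+,\cdot)$ is accounted for by a front exiting at $y=0$ (and not, say, by the operator-splitting step, which by \eqref{eq:u-v_fractional-step} leaves $u^\nu$ untouched), so that the telescoping sum is genuinely controlled by $L_{0,out}(t) \le q$.
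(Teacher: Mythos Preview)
Your treatment of (i) matches the paper's Step~1: anchor $\ln u^\nu$ at the initial boundary value $\tilde u_0^\nu$, control its drift at $y=0$ by the total outgoing strength $L_{0,out}(t)$, and add the interior oscillation via $L_{in}(t)$. The paper combines these as $L_{0,out}(t)+L_{in}(t)\le L(t)\le q$ in a single inequality, obtaining $u^\nu\in[\tilde u_0^\nu e^{-2q},\tilde u_0^\nu e^{2q}]$, but the mechanism is the one you describe.

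Your argument for (ii), however, contains a genuine error. The claim that ``conservation of momentum for the homogeneous system keeps [the spatial average of $v^\nu$] constant'' between time steps is false on the bounded domain $(0,M)$ with absorbing boundaries. Integrating the second equation of \eqref{eq:system-nosource-gamma1} over $(0,M)$ (equivalently, summing the Rankine--Hugoniot relations over the interior fronts of the piecewise-constant approximation) yields
\[
\frac{d}{dt}\int_0^M v^\nu(y,t)\,dy \;\approx\; \frac{\alpha^2}{u^\nu(0+,t)} - \frac{\alpha^2}{u^\nu(M-,t)}\,,
\]
which is generically nonzero. (In the Eulerian formulation of Section~\ref{Sect:3} this boundary pressure flux is cancelled by the contributions at the free boundaries $x=a^\nu(t),\,b^\nu(t)$, where $\rho^\nu=0$; no such cancellation occurs in the Lagrangian strip.) Your recursion can be salvaged \emph{after} (i) is established, by bounding this pressure imbalance uniformly and feeding it into the damping iteration, but that reverses the order you announce and requires more than you wrote.

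The paper's Step~4 avoids momentum altogether and simply reuses the anchoring idea from (i) with $v$ in place of $\ln u$: the damping step $v\mapsto(1-M\DT)v$ only contracts $|v^\nu(0+,t)|$ toward $0$, while each front exiting at $y=0$ changes it by at most $2\alpha\cosh(q)|\eps|$, so $|v^\nu(0+,t)|\le |\tilde v_0^\nu| + 2\alpha\cosh(q)\,L_{0,out}(t)$; together with $\tv\{v^\nu(\cdot,t)\}\le 2\alpha\cosh(q)\,L_{in}(t)$ and $L_{0,out}+L_{in}\le q$, this gives \eqref{eq:unif-bounds-v_nu} directly.

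Finally, your one-sentence claim that interactions ``cannot accumulate in finite time'' skips two ingredients the paper makes explicit: a check that rarefactions born at time steps have size $\le\eta_\nu$ and hence are not subdivided (Step~2), and a reference to the front-counting analysis of \cite{AmadoriGuerra01} establishing that only finitely many interactions occur in each strip $[t^{n-1},t^n)$ (Step~3).
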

\begin{proof} The proof is divided into four steps.

\smallskip
\textbf{Step 1: A priori bounds on $u^\nu$.}
Suppose that the approximate solution $(u^\nu, v^\nu)$ is defined for $t\in[0,T]$. Let $\tilde{u}_0^\nu$, $\tilde{u}_M^\nu$ be the positive values defined at \eqref{def:u0-nu_uM-nu}. 

By construction one has 
\begin{align*}
\frac{1}{2} \left| \ln\left(\frac{u^\nu(y,t)}{\tilde{u}_0^\nu} \right)\right|
& \le \frac{1}{2} \left| \ln\left(\frac{u^\nu(0+,t)}{\tilde{u}_0^\nu} \right)\right| +  \frac{1}{2} \left|\sum_{0< y_j<y} \ln\left(\frac{u^\nu(y_j+,t)}{u^\nu(y_j-,t)}\right)\right| \\
& \le L_{0,out}(t) + L_{in}(t) \le L(t)
\,.
\end{align*}
Since $0\le L(t)\le q$ for all $t$, we get
$$
- q 
\le\frac{1}{2}\ln\left(\frac{u^\nu(y,t)}{\tilde{u}_0^\nu} \right)
\le q\qquad \forall\, 0<y<M
$$
and therefore
$$
\tilde{u}_0^\nu e^{-2q}\le u^\nu(y,t)\le\tilde{u}_0^\nu e^{2q},\qquad \forall \, t\in [0,T]
\,.
$$
Similarly, 
\begin{align*}
\frac{1}{2} \left|\ln\left(\frac{u^\nu(y,t)}{\tilde{u}_M^\nu} \right)\right|
& \le \frac{1}{2} \left| \ln\left(\frac{u^\nu(M-,t)}{\tilde{u}_M^\nu} \right)\right| +  \frac{1}{2} \left|\sum_{y< y_j<M} \ln\left(\frac{u^\nu(y_j+,t)}{u^\nu(y_j-,t)}\right)\right| \\
& \le L_{M,out}(t) + L_{in}(t) \le L(t)
\,,
\end{align*}
so we get
$$
\tilde{u}_M^\nu e^{-2q}\le u^\nu(y,t)\le\tilde{u}_M^\nu e^{2q},\qquad \forall \, t\in [0,T]
\,.
$$
In particular, we find that
\begin{equation}\label{eq:bound-on-u0-uM}
   e^{-2q} \le \frac{\tilde{u}_0^\nu}{\tilde{u}_M^\nu}\,,\quad  \frac{\tilde{u}_M^\nu}{\tilde{u}_0^\nu}\le    e^{2q}
\end{equation}
and
$$u^\nu(y,t) \in [u_{inf}^\nu,u_{sup}^\nu]\subset (0,+\infty)\;,$$
where we define 
$$
0< u_{inf}^\nu = e^{-2q} \max\{\tilde{u}_0^\nu,\tilde{u}_M^\nu\}\,,\qquad  
u_{sup}^\nu =  e^{2q} \min\{\tilde{u}_0^\nu,\tilde{u}_M^\nu\}\,. 
$$
This proves \eqref{uinfsup}. We observe that inequality $u_{inf}^\nu \le u_{sup}^\nu$ holds if
\begin{equation*}
    \frac{\max\{\tilde{u}_0^\nu,\tilde{u}_M^\nu\}}{\min\{\tilde{u}_0^\nu,\tilde{u}_M^\nu\}} \le  e^{4q}
\end{equation*}
which is valid because of \eqref{eq:bound-on-u0-uM}.

Next, recalling \eqref{eq:limits-init-data}, the limits in~\eqref{eq:unif-bounds} hold with
$$
u_{inf}:= e^{-2q} \max\{u_0(0+),{u}_0(M-)\} >0\,,\qquad   
u_{sup}:= e^{2q} \min\{u_0(0+),{u}_0(M-)\},$$
which are independent of $\nu$ and of time. 

\smallskip
\textbf{Step 2: Bound on the size of the rarefactions.}

By construction, at time $t=0+$, the size of each rarefaction is bounded by $\eta_\nu$ and we know that after the interaction with other waves, the size does not increase.

On the other hand, newly generated rarefactions at the time steps may result as a reflected wave of a shock. 
Then from estimate \eqref{time-step-reflected-wave}, we find that
$$
|\eps_1^+|\le C_1^-(q) {M}\DT |\eps_2^-| \,.
$$
Therefore, if we choose $\DT=\DT_\nu$, $\eta=\eta_\nu$ such that
\begin{equation}
     C_1^-(q) {M}\DT q  \le \eta\,,
\end{equation}
then every newly generated rarefaction has size $\le \eta$, since $|\eps_2^-|\le q$. As a consequence, the rarefaction would not be divided into two or more, according 
to the algorithm in Subsection~\ref{S2.3}. 

\smallskip
\textbf{Step 3: Finite number of interactions.}
Bounds in \eqref{uinfsup}, \eqref{eq:unif-bounds} show that the propagation speeds of the waves, see Remark~\ref{rem:2.1}, are uniformly bounded and that the ranges of values of each family are separated.
Thus, we can apply the analysis in Ref.~\cite{AmadoriGuerra01}, Sect. 3, and prove that, except for a finite number of interactions, there is at most one outgoing wave of each family for each interaction. 
It should be clarified that the presence of boundary does not interfere with the analysis at this stage of the proof, since no reflected waves appear. 
Therefore, we conclude that the approximate sequence $(u^\nu, v^\nu)$ is defined for all times $t>0$ and hence \eqref{uinfsup}, \eqref{eq:unif-bounds} are valid for all $t>0$.

\smallskip
\textbf{Step 4: Global bound on $v^\nu$.}
Finally, it remains to prove bound  \eqref{eq:unif-bounds-v_nu}. 

Let $\tilde{v}_0^\nu$ the value given at \eqref{def:v0-nu}. Similarly to the proof of \eqref{uinfsup}, we will take into account the standby waves to the boundary $y=0$ so that we can refer to the reference point $\tilde{v}_0^\nu$ which is independent of time.

Indeed, by recalling \eqref{eq:tv-v} and the proof of \eqref{eq:unif-bound-tv-v}, we obtain that
\begin{equation*}
|\tilde{v}_0^\nu - v^\nu(0+,t)| + \tv \{v^\nu(\cdot,t)\}\le 2\alpha \cosh(q)\, L(t) \le C_1 \,, \qquad t\ge 0
\end{equation*}
where $C_1$ is the constant at \eqref{eq:unif-bound-tv-v}. Then
\begin{equation*}
    |v^\nu(y,t)|\le |\tilde{v}_0^\nu| + C_1 \le \widetilde C_0\qquad  \forall\, y\,,\ t
\end{equation*}
for some $\widetilde C_0>0$ independent of $\nu$\,. This completes the proof of \eqref{eq:unif-bounds-v_nu}.
\end{proof}

\subsection{A weighted functional} \label{subsect:Lxi}
In this subsection, we present a functional that better captures the structure of the solution and in particular the amount of waves cancellation. 
This functional was introduced in Ref.~\cite{AmadoriGuerra01} and refined in Refs.~\cite{AC_SIMA_2008,ABCD_JEE_2015}. This is employed in the next subsection.

To begin with, we state a result (see Lemma 5.6 in Ref.~\cite{AC_SIMA_2008} and Lemma 5.4 in Ref.~\cite{ABCD_JEE_2015}) 
that provides a useful estimate on the size of the reflected wave produced at an interaction of waves. 

\begin{lemma}  
\label{lem:shock-riflesso}
Consider the interaction of two waves of the same family, of sizes $\alpha_i$ and $\beta_i$, $i=1$ or $2$, producing two outgoing waves $\eps_1$, $\eps_2$. 
Let $q>0$ and assume that $$|\alpha_i|\le q,\qquad |\beta_i|\le q\,.$$ Then the followings hold.
\begin{itemize}
\item[(a)] If the incoming waves are both shocks, then the resulting wave is a shock
and it satisfies $$\max \{|\alpha_i|,|\beta_i|\} < |\eps_i| < |\alpha_i| + |\beta_i|\;,$$ 
while the reflected wave is a rarefaction.

\medskip
\item[(b)] If the incoming waves have different signs, then 

- the reflected wave is a shock, 

- both the amount of shocks and the amount of rarefactions of the $i^{th}$ family decrease across
the interaction; 

- and, finally, for 
\begin{equation}\label{def:c(m)}
c(q) := \frac{\cosh(q)-1}{\cosh(q)+1}    
\end{equation}
one has
\begin{equation}\label{eq:chi_def}
|\eps_j| \le c(q) \cdot \min\{|\alpha_i|,|\beta_i|\}\,,\qquad j\ne i\,.
\end{equation}
\end{itemize}
\end{lemma}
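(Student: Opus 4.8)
The plan is to work entirely within the explicit wave-curve parametrization set up in Subsection 2.2, namely \eqref{eq:lax13}--\eqref{h}, and reduce the interaction to the two scalar identities obeyed by the strengths. Consider, say, two incoming $1$-waves of sizes $\alpha_1$ and $\beta_1$ (the $2$-family case is symmetric); let the outgoing $1$-wave have size $\eps_1$ and the reflected $2$-wave have size $\eps_2$. Composing the two curves \eqref{eq:lax13} and then solving the resulting Riemann problem, one obtains — exactly as in Proposition~\ref{prop:2.1}, formulas \eqref{eq:one}--\eqref{eq:two} — the relations
\begin{align*}
\eps_2 - \eps_1 &= -(\alpha_1 + \beta_1)\,, \\
h(\eps_1) + h(\eps_2) &= h(\alpha_1) + h(\beta_1)\,,
\end{align*}
since the product of the two $1$-curves shifts $\tfrac12\ln u$ by $\alpha_1+\beta_1$ and shifts $v/(2\alpha)$ by $h(\alpha_1)+h(\beta_1)$. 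First I would record these two identities as the backbone of the whole argument; everything else is a case analysis on the signs of $\alpha_1,\beta_1$ driven by the convexity/concavity properties of $h$.

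For part (a), assume $\alpha_1,\beta_1<0$ (both shocks). Using $h=\sinh$ on the negatives and the superadditivity-type inequality $\sinh(a+b) < \sinh a + \sinh b$ for $a,b<0$, together with monotonicity of $h$, one shows $\eps_1<0$ with $\max\{|\alpha_1|,|\beta_1|\}<|\eps_1|<|\alpha_1|+|\beta_1|$; then the first identity forces $\eps_2 = \eps_1 + |\alpha_1|+|\beta_1| > 0$, i.e. the reflected wave is a rarefaction. For part (b), assume $\alpha_1>0>\beta_1$ (opposite signs). The goal is the bound \eqref{eq:chi_def}: $|\eps_2| \le c(q)\min\{|\alpha_1|,|\beta_1|\}$ with $c(q)=(\cosh q-1)/(\cosh q+1)$. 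Here I would substitute $\eps_1 = \eps_2 + (\alpha_1+\beta_1)$ into the second identity to get a single scalar equation for $\eps_2$, namely $h(\eps_2 + \alpha_1 + \beta_1) + h(\eps_2) = h(\alpha_1) + h(\beta_1)$, show that it has a unique root, determine its sign (it is a shock, so $\eps_2<0$), and then bound $|\eps_2|$ by a first-order Taylor/mean-value estimate. Concretely: write the left-hand side minus the right-hand side as a function $F(\eps_2)$, note $F(0) = h(\alpha_1+\beta_1) - h(\alpha_1) - h(\beta_1)$, estimate this residual using $|\alpha_1|,|\beta_1|\le q$ and the Lipschitz bounds $1 \le h'(\xi) \le \cosh q$ on the relevant range, and divide by $|F'(\xi)| \ge 1 + $ (something), extracting the factor $(\cosh q - 1)/(\cosh q + 1)$. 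The statement that shocks and rarefactions of the $i$-th family both decrease is then a consequence of the sign of $\eps_1$ relative to $\alpha_1,\beta_1$ obtained along the way.

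Since the Lemma is quoted from \cite{ABCD_JEE_2015}, the cleanest route is to say that the identities above are precisely those derived there and refer to that computation for the sharp constant; but if a self-contained argument is wanted, the main obstacle is the sharpness of $c(q)$ in \eqref{eq:chi_def} — getting the right constant rather than merely \emph{some} constant requires tracking the extremal configuration, which is when one incoming wave has size exactly $\pm q$ and the interaction is as "unbalanced" as allowed. The sign rules and the qualitative monotonicity claims are comparatively routine once the two scalar identities are in hand; the quantitative estimate is where care is needed, and I would handle it by the single-variable reduction $F(\eps_2)=0$ described above rather than by manipulating both outgoing strengths simultaneously.
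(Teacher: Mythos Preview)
The paper does not actually prove this lemma: it is stated with a citation to \cite[Lemma~5.4]{ABCD_JEE_2015} and used as a black box, so there is no ``paper's own proof'' to compare against. That said, your two scalar identities are precisely the ones the authors themselves record later, in Appendix~\ref{subsec:app3} (equations \eqref{tre-uno}--\eqref{tre-due}), when they prove the \emph{next} lemma (Lemma~\ref{lem:Delta-L-xi}); so your backbone is exactly the right one, and the sign analyses you sketch for part~(a) and for the qualitative claims in part~(b) go through essentially as you describe. One small point: your sign convention for the first identity is correct for $i=1$ (you get $\eps_2-\eps_1=-(\alpha_1+\beta_1)$), while the paper writes it for $i=2$ as $\eps_2-\eps_1=\alpha_2+\beta_2$; these are consistent.

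The only place to be careful is the quantitative bound \eqref{eq:chi_def}. Your plan --- reduce to the single equation $F(\eps_2)=0$ with $F(0)=h(\alpha_1+\beta_1)-h(\alpha_1)-h(\beta_1)$ and apply a mean-value argument --- is the right shape, but a naive Lipschitz bound $1\le h'\le\cosh q$ will not by itself produce the sharp factor $c(q)=(\cosh q-1)/(\cosh q+1)$. To extract exactly this constant you need to split according to the sign of $\alpha_1+\beta_1$ and, in each case, bound $|F(0)|$ by $(\cosh q -1)\cdot\min\{|\alpha_1|,|\beta_1|\}$ while bounding $F'$ from below by $1+\cosh(\cdot)\ge 2$ on the relevant interval; the Remark following the lemma in the paper indicates that \cite{ABCD_JEE_2015} in fact obtains the finer constant $c(|\text{shock}|)$ rather than $c(q)$, which then implies \eqref{eq:chi_def} by monotonicity of $c(\cdot)$. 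You already flag this as the delicate step, and that assessment is accurate; the rest of your outline is sound.
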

\begin{remark}
The result of Lemma 5.4 in Ref.~\cite{ABCD_JEE_2015} provides an estimate which is slightly stronger than \eqref{eq:chi_def}, where $c(q)$ is replaced by $
c(\cdot)$ computed at the modulus of the size of the shock. For instance, if 
$\alpha_i < \beta_i$, then $c=c(|\alpha_i|)$. Because of the assumptions above, 
$c(|\alpha_i|)\le c(q)$ and hence \eqref{eq:chi_def} holds, which is sufficient for our purposes in this article.
\end{remark}

\medskip
Next, we define the weighted functional
\begin{equation}\label{def:Lxi}
L_\xi(t) = \sum_{j=1,\ \eps_j>0}^{N(t)} |\eps_j| + \xi \sum_{j=1,\ \eps_j<0}^{N(t)} |\eps_j|\qquad \xi\ge1\,.
\end{equation}
In other words, all shocks are multiplied by the weight $\xi$ which is greater than $ 1$. In the case of $\xi=1$, the functional coincides with $L_{in}(t)$.

It is immediate to verify that $L_\xi(t)$ is globally bounded, since 
   $L_{in}(t)\le L_\xi(t) \le \xi L_{in}(t)$ for all $t$
and hence, by Lemmas~\ref{lem:bounds-on-bv} and~\ref{Lemmauinfsup}, 
\begin{equation}\label{bound-on-Lxi}
    L_\xi(t) \le \xi L_{in}(0+)\qquad \forall \, t>0\,.
\end{equation}
In the following lemma, we recall a result from Ref.~\cite{ABCD_JEE_2015}, that here applies at every time $t$ which is not a time step. 

\begin{lemma}\label{lem:Delta-L-xi}
Let $q>0$ and assume that $L(0+)\le q$\,.
For $c(q)$ given in \eqref{def:c(m)}, if 
\begin{equation}\label{bounds-on-xi}
1\le \xi\le \frac 1 {c(q)}\,, 
\end{equation}
then $\Delta L_\xi(t)\le 0$ for any $t>0$, $t\not = t^n$ for $n\ge 1$. 
In particular, let $t$ be an interaction time between two waves of the same family, and denote by $\eps_{refl}$ the size of the reflected wave.
Then one has
\begin{equation}\label{eq:refined-decay-Lxi}
 \Delta L_\xi(t) + \left(\xi-1\right)|\eps_{refl}| \le 0\,.
\end{equation}
\end{lemma}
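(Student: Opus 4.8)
The plan is to track the jump $\Delta L_\xi(t)$ at each $t\neq t^n$ at which the piecewise constant approximate solution changes: splitting a rarefaction into a fan of fronts leaves $L_\xi$ (and $L_{in}$) unchanged, so the only relevant events are $(i)$ a front reaching $y=0$ or $y=M$, $(ii)$ an interaction of two fronts of different families, and $(iii)$ an interaction of two fronts of the same family. It is convenient to write $L_\xi=L_{in}+(\xi-1)L^-$, where $L^-(t)$ is the total strength of the shock-fronts lying in $(0,M)$ at time $t$, and to recall that $\Delta L_{in}(t)\le 0$ always. In case $(i)$ the front leaves the interval and becomes a standby front, so $L_\xi$ drops by the nonnegative weight of that front and no wave is reflected; in case $(ii)$, by uniqueness of the Riemann solution (Proposition~\ref{prop:2.1}) — equivalently, because the relations \eqref{eq:one}--\eqref{eq:two} determine the outgoing pair uniquely, $h$ being strictly increasing — the two fronts cross each other with unchanged strengths, so $\Delta L_\xi(t)=0$ and again nothing is reflected. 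Hence it suffices to analyze case $(iii)$.

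By symmetry assume the incoming fronts are of the first family, with sizes $\alpha_1,\beta_1$ satisfying $|\alpha_1|,|\beta_1|\le q$ (Lemma~\ref{lem:bounds-on-bv}); since two rarefaction fronts of the same family never meet, Lemma~\ref{lem:shock-riflesso} leaves exactly two possibilities. Write $\eps_1,\eps_2$ for the outgoing $1$- and $2$-waves, so $\eps_{refl}=\eps_2$, and recall from \eqref{eq:one} that $\eps_2-\eps_1=-(\alpha_1+\beta_1)$. If both incoming fronts are shocks (Lemma~\ref{lem:shock-riflesso}$(a)$), then $\eps_1<0$, $\eps_2>0$, so $|\eps_1|+|\eps_2|=\eps_2-\eps_1=|\alpha_1|+|\beta_1|$; therefore $\Delta L_{in}(t)=0$ and $\Delta L^-(t)=|\eps_1|-(|\alpha_1|+|\beta_1|)=-|\eps_{refl}|$, whence $\Delta L_\xi(t)=-(\xi-1)|\eps_{refl}|$ and \eqref{eq:refined-decay-Lxi} holds with equality, for every $\xi\ge 1$.

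The decisive case is one incoming shock and one incoming rarefaction, of sizes $s$ and $r$; put $\delta=|\eps_{refl}|$. By Lemma~\ref{lem:shock-riflesso}$(b)$, $\eps_{refl}$ is a shock and $\delta\le c(q)\min\{s,r\}$; combined with $\eps_2=-\delta$ and $\eps_2-\eps_1=-(\alpha_1+\beta_1)=s-r$ this yields $\eps_1=(r-s)-\delta$, so the transmitted wave is a rarefaction of size $(r-s)-\delta$ when $\delta<r-s$ and a shock of size $\delta-(r-s)$ otherwise. Substituting into $L_\xi$ (the incoming contribution being $\xi s+r$) one obtains, after a short computation,
\begin{equation*}
\Delta L_\xi(t)+(\xi-1)|\eps_{refl}|=\begin{cases}\,2(\xi-1)\delta-(\xi+1)s, & \text{if }\ \delta<r-s,\\[1mm] \,(3\xi-1)\delta-(\xi+1)r, & \text{if }\ \delta\ge r-s.\end{cases}
\end{equation*}
In the first line $r>s$, so $\delta\le c(q)s$ and the right-hand side is $\le s\bigl(2(\xi-1)c(q)-(\xi+1)\bigr)$; in the second line $\delta\le c(q)r$, so it is $\le r\bigl((3\xi-1)c(q)-(\xi+1)\bigr)$. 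Both brackets are affine in $\xi$, strictly negative at $\xi=1$, and $\le 0$ at $\xi=1/c(q)$ because $0<c(q)<1$ forces $c(q)+\tfrac1{c(q)}\ge 2$ and $2c(q)+\tfrac1{c(q)}>1$; hence they are $\le 0$ throughout $1\le\xi\le 1/c(q)$. This proves \eqref{eq:refined-decay-Lxi}, and since $(\xi-1)|\eps_{refl}|\ge 0$ also $\Delta L_\xi(t)\le 0$; together with cases $(i)$--$(ii)$ this gives $\Delta L_\xi(t)\le 0$ at every $t\neq t^n$.

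I expect the main obstacle to be the bookkeeping in the shock/rarefaction interaction: one must correctly determine the type of the transmitted wave (governed by the sign of $r-s-\delta$, not merely by whether $s\gtrless r$), must use the exact identity \eqref{eq:one} rather than a mere cancellation inequality — the weaker estimate ``transmitted shock $<\max\{s,r\}$'' does not close the argument — and must then check that the constraints on $\xi$ extracted from the two branches are both implied by, and together force, precisely $\xi\le 1/c(q)$; that is, that $c(q)$ is exactly the reflection ratio for which the extra weight carried by shocks is sustained by the cancellation they produce.
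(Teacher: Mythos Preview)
Your proof is correct and follows essentially the same approach as the paper's: the same case analysis on interaction types, driven by the identity \eqref{eq:one} and the reflection bound \eqref{eq:chi_def} from Lemma~\ref{lem:shock-riflesso}. The only notable difference is the algebra in the shock/rarefaction case: you parametrize by $(s,r,\delta)$ and check an affine-in-$\xi$ bracket at the endpoints $\xi=1$ and $\xi=1/c(q)$, whereas the paper manipulates directly and in fact obtains the slightly stronger bound $\Delta L_\xi + \xi(\xi-1)|\eps_{refl}|\le 0$ there (using, in the SR$\to$SR sub-case, also the decrease of the transmitted rarefaction from Lemma~\ref{lem:shock-riflesso}(b), which your argument bypasses).
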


For convenience of the reader, we present the proof in  Appendix~\ref{subsec:app3}. 

\begin{remark}\label{rem:L_in_xi=1}
For $\xi=1$, the inequality \eqref{eq:refined-decay-Lxi} reduces to $\Delta L_{in}(t)\le 0$, \eqref{L-decreases}.

Also observe that, from \eqref{bounds-on-xi}, the range of values for $\xi$ shrinks as $q$ increases, being however defined for every $q>0$.
\end{remark}

Next, we examine the behavior of the  $L_\xi$ across the time steps, for $\xi>1$.

While in between time steps the functional $L_\xi$ decays, at the time steps it may well increase. 
Indeed, let's denote by $\eps^\pm$ the size of a wave before and after the time step, and by $\eps_{refl}$ the size of the new "reflected" wave. 
Recalling Proposition~\ref{S2:prop:estimate-time-step} and the sign rule, we have that 
$\eps^+\cdot \eps^->0$ and $\eps_{refl}\cdot \eps^-<0$.

Therefore
\begin{align*} \nonumber
\Delta L_\xi(t^n) &= \sum_{\eps^->0} \left[ \xi |\eps_{refl}| + |\eps^+| - |\eps^-| \right]  
+ \sum_{\eps^- <0} \left[  |\eps_{refl}| + \xi \left( |\eps^+| - |\eps^-| \right) \right] \\  \nonumber
&= \sum_{\eps^->0}  (\xi-1) |\eps_{refl}|  + \sum_{\eps^- <0} \underbrace{(1-\xi)}_{<0}   |\eps_{refl}| \\
&\le \sum_{\eps^->0}  (\xi-1) |\eps_{refl}| \\
&\le  \frac{M}2  \DT \, (\xi-1)\sum_{\eps^->0}|\eps^-|\;,
\end{align*}
where we used \eqref{time-step-reflected-wave}. Since the last sum is bounded by $L_{in}(t)$, we conclude that
\begin{align} 
\Delta L_\xi(t^n) \le \frac{M}2  \DT \, (\xi-1) L_{in}(t^n)\,.
\label{eq:Delta-L-xi-time-step}
\end{align}

\subsection{Estimates on vertical traces}\label{subsec:vert-traces}
In this subsection, we control the variation of the approximate solutions in time for fixed $y$. Given $y\in (0,M)$ and $t>0$, in a similar spirit as in the previous subsection, we consider the functional
\begin{equation*}
    W^\nu_y(t) = \frac 12 \tv\{ \ln (u^\nu)(y,\cdot); (0,t)\}\,. 
\end{equation*}
By the definition of the sizes at \eqref{eq:strengths}, and as in \eqref{prop:equivalence-Lin}, this quantity is the sum of the strengths
$|\eps|$ of the waves that cross $y$ within the time interval $(0,t)$. The definition of $W^\nu_y$ is inspired by Ref.~\cite{CG_2016}. 

In the next lemma, we provide a uniform bound for $W^\nu_y$ on bounded intervals of time, independently of $y\in [0,M]$. We set 
$$
u^\nu(0,t) = \lim_{y\to 0+} u^\nu(y,t)\,,\qquad u^\nu(M,t) = \lim_{y\to M-} u^\nu(y,t)\,.
$$ 

\begin{lemma} There exist $\widetilde C_1(q)$,  $\widetilde C_2(q)$ such that, for every $y\in[0,M]$, $\nu\in \N$ and $T>0$, 
the following holds:
\begin{align}\label{eq:bound-on-TV_at_y}
    W^\nu_y(T)\le  \widetilde C_1  L_{in}(0) + \widetilde C_2 M \int_0^{T} L_{in}(t)\,dt \,.   
\end{align}
\end{lemma}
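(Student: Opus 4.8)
The plan is to estimate $W^\nu_y(T)$ — the total strength of all wave-fronts crossing the vertical line $\{y\}\times(0,T)$ — by tracking three sources of such crossings: (i) fronts present in the initial data, (ii) fronts newly created by interactions in the interior, and (iii) fronts newly created at the time steps $t^n$ by the damping source. Since every front that crosses $y$ at some time either was already present at $t=0+$ (hence contributes at most its strength, bounded by the decrease of $L_{in}$) or was generated in the interior by an interaction of two waves of the same family (as a reflected wave), or was generated at a time step $t^n$ as a reflected wave, the quantity $W^\nu_y(T)$ is controlled by $L_{in}(0)$ plus the total strength of all reflected waves created in $(0,T)$. The first step, therefore, is to write $W^\nu_y(T)\le L_{in}(0)+\sum_{\text{interactions in }(0,T)}|\eps_{refl}| + \sum_{t^n\le T}\sum |\eps_{refl}(t^n)|$ — this is the bookkeeping/combinatorial step that reduces the bound to controlling reflected waves only, and it holds uniformly in $y$ because each such front crosses any fixed $y$ at most once.

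Next I would bound the total strength of reflected waves from interactions. For this I use Lemma~\ref{lem:Delta-L-xi}: choosing $\xi$ with $1<\xi\le 1/c(q)$, inequality \eqref{eq:refined-decay-Lxi} gives $(\xi-1)|\eps_{refl}|\le -\Delta L_\xi(t)$ at every interaction time, while \eqref{eq:Delta-L-xi-time-step} controls the jumps of $L_\xi$ across the time steps by $\tfrac M2 \DT(\xi-1)L_{in}(t^n)$. Summing over all interaction times in $(0,T)$ and all time steps, and using that $L_\xi\ge 0$ together with the uniform bound \eqref{bound-on-Lxi}, telescoping yields
$$
(\xi-1)\sum_{\text{interactions}}|\eps_{refl}| \le L_\xi(0+) + \frac M2 (\xi-1)\sum_{t^n\le T}\DT\, L_{in}(t^n) \le \xi L_{in}(0+) + \frac M2 (\xi-1)\sum_{t^n\le T}\DT\, L_{in}(t^n)\,,
$$
so $\sum_{\text{interactions}}|\eps_{refl}| \le \frac{\xi}{\xi-1}L_{in}(0+) + \frac M2 \sum_{t^n\le T}\DT\, L_{in}(t^n)$. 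The reflected waves produced at the time steps are bounded directly by Proposition~\ref{prop:estimate-time-step}: each satisfies $|\eps_{refl}|\le C_1^-(q)\,M\DT\,|\eps^-|$, and summing the incoming strengths over all fronts present at $t^n$ gives $\sum|\eps_{refl}(t^n)|\le C_1^-(q)\,M\DT\, L_{in}(t^n)$, hence $\sum_{t^n\le T}\sum|\eps_{refl}(t^n)|\le C_1^-(q)\,M\sum_{t^n\le T}\DT\,L_{in}(t^n)$. Finally I replace the Riemann sum $\sum_{t^n\le T}\DT\,L_{in}(t^n)$ by the integral $\int_0^T L_{in}(t)\,dt$, which is legitimate since $L_{in}$ is non-increasing (Lemma~\ref{lem:bounds-on-bv}) so the Riemann sum over a uniform partition is bounded by $\int_0^T L_{in}(t)\,dt$ up to a term $\DT\,L_{in}(0)$ that is absorbed into the $L_{in}(0)$ contribution. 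Collecting the three contributions gives \eqref{eq:bound-on-TV_at_y} with $\widetilde C_1(q)=1+\frac{\xi}{\xi-1}$ (plus the absorbed constant) and $\widetilde C_2(q)=\tfrac12 + C_1^-(q)$, both depending only on $q$.

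The main obstacle I expect is the bookkeeping in step (i): making precise, at the level of the front-tracking construction, that a front crossing $y$ can be unambiguously attributed to the initial data, to a single interior interaction, or to a single time step, and that in the first case its strength is dominated by a term that telescopes against the decrease of $L_{in}$ rather than being double-counted. One has to be careful that a front may be created, then merge and re-emerge, so the cleanest argument is to charge $W^\nu_y$ against the \emph{creation} events of fronts (each front is created exactly once, at $t=0$, at an interaction, or at a time step) rather than against the fronts existing at a given instant; the boundary plays no adverse role here because absorbed fronts simply stop contributing. The rest — applying the already-proven Propositions and Lemma~\ref{lem:Delta-L-xi}, telescoping, and comparing sum to integral — is routine given the estimates established earlier in this section.
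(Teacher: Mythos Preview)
Your overall strategy is sound and, once carried out, yields exactly the constants the paper obtains: $\widetilde C_1 = 2 + \tfrac{1}{\xi-1}$ (take $\xi = 1/c(q)$) and $\widetilde C_2 = \tfrac{1+\cosh q}{2}$. The steps bounding $\sum_{\text{interactions}}|\eps_{refl}|$ via telescoping $L_\xi$, bounding the time-step reflections via Proposition~\ref{prop:estimate-time-step}, and replacing $\sum_n \DT\, L_{in}(t^n)$ by $\int_0^T L_{in}$ are all correct.

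The gap is precisely in the step you flag as the main obstacle, the bookkeeping inequality
\[
W^\nu_y(T)\le L_{in}(0)+\sum_{\text{interactions}}|\eps_{refl}| + \sum_{t^n\le T}\sum |\eps_{refl}(t^n)|\,.
\]
Your proposed justification (``charge $W^\nu_y$ against creation events'') does not work as stated. If you declare the transmitted/merged wave at a same-family interaction to be a \emph{continuation} of one of its parents, then at an $SS\!\to\! RS$ interaction the transmitted shock satisfies $|\eps_i|>\max\{|\alpha_i|,|\beta_i|\}$, so its strength when it eventually crosses $y$ can exceed its strength at any single ``creation'' in its genealogy; the inequality does not follow. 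If instead you declare \emph{both} outgoing waves to be newly created, then your right-hand side must include all transmitted waves as well as all reflected ones, which is far too large.

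The clean device that makes the bookkeeping rigorous is the \emph{approaching functional}
\[
A_y(t)=\sum_{\substack{\text{fronts at time }t\\ \text{approaching }y}} |\eps|\,,
\]
and this is exactly what the paper introduces. One checks that $W^\nu_y+A_y$ is unchanged when a front crosses $y$, does not increase at boundary absorptions or at interactions/time-steps on the side of $y$ where waves are approaching, and increases by at most $|\eps_{refl}|$ at interactions/time-steps on the other side. Summing and using $A_y(0)\le L_{in}(0)$, $A_y(T)\ge 0$ yields your bookkeeping inequality. The paper goes one step further and folds the $L_\xi$ telescoping into the same computation, working directly with the single monotone-up-to-time-steps functional $W^\nu_y + A_y + \tfrac{1}{\xi-1}L_\xi$; your version separates the two stages, which is equally valid once the $A_y$ device is in place.
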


\begin{proof} 
Let $y\in (0,M)$ be fixed. Let $A_y(t)$ denote the sum of the sizes of the waves that, at time $t$, are approaching the position $y$; 
that is, the waves of the $2^{nd}$ family, located to the right of $y$ and the waves of the $1^{st}$ family, located to the left of $y$.
Then consider
\begin{equation*}
    t\mapsto W^\nu_y(t) + A_y(t) +  \kappa 
     L_\xi(t)\,, \qquad \kappa=\frac1{\xi-1}\,.
\end{equation*}
We claim that the following holds,

\textit{(1)}\quad if $t\not = t^n$ then
\begin{equation}\label{Delta-W-one}
\Delta\left(W^\nu_y + A_y + \kappa L_\xi\right)(t) \le 0;    
\end{equation}

\textit{(2)}\quad at every time $t=t^n$, one has
\begin{equation}\label{Delta-W-two}
\Delta\left(W^\nu_y + A_y + \kappa L_\xi\right)(t^n) \le \DT\,L_{in}(t^n) \, M\,\widetilde C_2\,,\qquad 
\widetilde C_2 = \frac{\cosh(q) + 1}2  \,.
\end{equation}

{\bf Proof of \eqref{Delta-W-one}.} The sum $W^\nu_y(t) + A_y(t) + \kappa L_\xi(t)$ is piecewise constant in time and can change value only in the following cases.

\smallskip{}
{\bf a.}\quad If a wave front of size $\eps$ crosses the position $y$, then \eqref{Delta-W-one} holds because
$$
\Delta W^\nu_y(t)=|\eps|\,,\quad  \Delta A_y(t) = - |\eps|\,,\quad  \Delta L_\xi(t)=0;
$$

\smallskip{}
{\bf b.}\quad If a wave front of size $\eps$ reaches $y=0$ or $y=M$, then 
$$
\Delta W^\nu_y(t)=0= \Delta A_y(t)\,,\quad  \Delta L_\xi(t)\le-  |\eps|;
$$
and~\eqref{Delta-W-one} holds.

\smallskip{}
{\bf c.}\quad If at time $t$ an interaction between two wave fronts occur at a point $\not = y$, then 
$\Delta W^\nu_y(t)=0$. Moreover, if the interacting waves are approaching $y$, then 
$\Delta A_y(t)\le 0$ and $\Delta L_\xi(t)\le 0$. 

On the other hand, if the interacting waves are not approaching $y$, then the reflected wave is approaching $y$ and hence
$\Delta A_y(t)> 0$. More specifically, in the notation of Lemma~\ref{lem:shock-riflesso}, 
$$
\Delta A_y(t) = |\eps_j|\,, \qquad \Delta L_\xi(t)\le - (\xi-1)|\eps_j|;
$$
thanks to the choice of $\kappa$ we get immediately \eqref{Delta-W-one}.

Finally, if the interaction occurs at the point $y$, then
$$
\Delta W^\nu_y(t)= |\eps_i|\,,\quad  \Delta A_y(t)= - |\alpha_i| - |\beta_i|\,,\quad  \Delta L_\xi(t)\le  - (\xi-1)|\eps_j|.
$$
We easily find that
$$
\Delta W^\nu_y(t) + \Delta A_y(t)\le   |\eps_i|+|\eps_j| - |\alpha_i| - |\beta_i|= \Delta L_{in}(t)\le 0 
$$
and hence \eqref{Delta-W-one} holds in all cases.

\smallskip
{\bf Proof of \eqref{Delta-W-two}.} If $t=t^n$,  we claim that
\begin{equation}\label{eq:DeltaW-case_d}
    \Delta W^\nu_y(t) + \Delta A_y(t)\le \DT\, C_1^-(q)M L_{in}(t)\,.
\end{equation}
Indeed, if no wave reaches the position $y$ at time $t^n$, then 
$$
\Delta W^\nu_y(t)=0  \,,\quad  \Delta A_y(t)\le \DT\, C_1^-(q)M L_{in}(t)
$$
with $C_1^-(q) = \cosh(q)/2$, see Proposition~\ref{prop:estimate-time-step}.

On the other hand, if a wave of size $\eps$ reaches $y$ at time $t^n$, then
$$
\Delta W^\nu_y(t)=|\eps'|  \,,\quad  \Delta A_y(t)\le -|\eps| +  \DT\, C_1^-(q)M \, L_{in}(t),
$$
where $\eps'$ is the size of the transmitted wave. Recalling Proposition~\ref{S2:prop:estimate-time-step}, one has that 
$|\eps'|< |\eps|$ and hence \eqref{eq:DeltaW-case_d} holds also in this case. This concludes the proof of  \eqref{eq:DeltaW-case_d}.

Regarding $\Delta L_\xi(t^n)$, this is bounded as in \eqref{eq:Delta-L-xi-time-step}. Then, we combine \eqref{eq:DeltaW-case_d} 
and \eqref{eq:Delta-L-xi-time-step} to obtain
\begin{equation*}
\Delta\left(W^\nu_y + A_y + \kappa L_\xi\right)(t^n) \le \DT\,L_{in}(t^n) \, M \left(C_1^-(q) + \frac 12 \underbrace{\kappa (\xi -1)}_{=1}  \right)
\end{equation*}
and hence \eqref{Delta-W-two} holds with $\widetilde C_2 \, \dot = \,C_1^-(q) + \frac 12 = \frac{\cosh(q) + 1}2$\,.

\smallskip
Finally, by combining \eqref{Delta-W-one} and \eqref{Delta-W-two}, we find that
\begin{align*}
    W^\nu_y(T)&\le  W^\nu_y(T) + A_y(T) + \kappa L_{\xi}(T)\\
    &= \underbrace{W^\nu_y(0)}_{=0} + \underbrace{A_y(0)}_{\le L_{in}(0)} 
    + \kappa \underbrace{L_{\xi}(0)}_{\le \xi  L_{in}(0)} + \sum_{0<t<T}\Delta\left(W^\nu_y + A_y + \kappa L_{\xi}\right)(t)\\
    &\le \left(1 + \frac{\xi}{\xi-1}\right)L_{in}(0)  + \sum_{j=1}^n \Delta\left(W^\nu_y + A_y + \kappa L_{\xi}\right)(t^j)\\
    &\le  \left(2 + \frac{1}{\xi-1}\right)L_{in}(0) +  \widetilde C_2 M \int_0^{t^n} L_{in}(t)\,dt 
\end{align*}
with $n$: $t^n < T \le t^{n+1}$, where we used that $L_{in}(t)$ is non-increasing and then
$$
\DT\,L_{in}(t^n) \le \int_{t^{n-1}}^{t^n} L_{in}(t)\,dt\,.
$$
The inequalities above hold for every $\xi$ that satisfies exactly bound \eqref{bounds-on-xi}. Therefore, we can take 
$\xi=c(q)^{-1} = (\cosh(q) +1)/(\cosh(q) -1 )$ and get
$$
2 + \frac{1}{\xi-1} = \frac{3+\cosh(q)}{2}\, =:\, \widetilde C_1 \ge 1\,.
$$
Now we use the bound $\int_0^{t^n} L_{in} \le \int_0^T L_{in}$ and conclude that \eqref{eq:bound-on-TV_at_y} holds for $0<y<M$.

Finally, we consider the case of $y=0$ and $y=M$. Recalling the definition of $L_{0,out}(t)$, $L_{M,out}(t)$ and \eqref{def:L-Lout}, we find that
\begin{equation*}
    L_{0,out}(t)= \frac 12 \tv\{ \ln (u^\nu)(0,\cdot); (0,t)\}= W^\nu_0(t)\,,
\end{equation*}
and similarly $L_{M,out}(t)= W^\nu_M(t)$. Employing~\eqref{LLin-decreases}, we get
\begin{equation}\label{bound-tv-y0-yM}
   W^\nu_0(t) + W^\nu_M(t) \le L(t)\le L(0) = L_{in}(0)\qquad t\ge 0\,.
\end{equation}
Since $\widetilde C_1\ge 1$, then \eqref{eq:bound-on-TV_at_y} is valid also for $y=0$ and $y=M$.
\end{proof}

In the following lemma, we establish the uniform bounds on the total variation of $(u^\nu,v^\nu)$ with respect to time, 
as well as their stability of the $L^1$ norm on bounded intervals of time. As done for $u^\nu$, we also define 
$$v^\nu(0,t) = \lim_{y\to 0+}v^\nu(y,t) \qquad v^\nu(M,t)= \lim_{y\to M-}v^\nu(y,t).$$
\begin{lemma}\label{lem:vertical-bounds}
For every $T>0$, there exist positive constants $C$ and  $L$ independent of $\nu\in\N$ such that for all $y\in[0,M]$
\begin{align}\label{bound-bv-time}
    \tv\{u^\nu(y,\cdot);[0,T)]\} \le C\,,\quad
     \tv\{v^\nu(y,\cdot);[0,T)]\} \le C\,,
\end{align}
and 
\begin{align}\label{bound-integral-time}
   \int_0^T \left| v^\nu(y_1,t) -  v^\nu(y_2,t)\right|\, dt \le L \left| y_2 - y_1  \right|\qquad \forall\, y_1\,,\, y_2\in [0,M]\,.
\end{align}
An inequality analogous to \eqref{bound-integral-time} holds for $u^\nu$.
\end{lemma}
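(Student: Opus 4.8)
The plan is to deduce both conclusions from the bound~\eqref{eq:bound-on-TV_at_y} on the vertical functional $W^\nu_y$, the uniform bounds of Lemma~\ref{Lemmauinfsup}, and the structure of the operator-splitting scheme, together with an elementary use of the two equations in~\eqref{eq:system_Lagrangian}.

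\textit{Step 1: total variation in time at a fixed $y$.} Since $L_{in}$ is non-increasing and $L_{in}(t)\le q$, estimate~\eqref{eq:bound-on-TV_at_y} gives $W^\nu_y(T)=\frac12\tv\{\ln u^\nu(y,\cdot);(0,T)\}\le \widetilde C_1 q+\widetilde C_2 M q T$ for every $y\in[0,M]$ and $\nu\in\N$. By Lemma~\ref{Lemmauinfsup}, $u^\nu$ takes values in a fixed compact subinterval of $(0,+\infty)$ independent of $\nu$, on which $u\mapsto\ln u$ is bi-Lipschitz; hence $\tv\{u^\nu(y,\cdot);(0,T)\}\le 2u_{sup}\,W^\nu_y(T)$, which gives the first bound in~\eqref{bound-bv-time}. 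For $v^\nu$, the variation of $t\mapsto v^\nu(y,t)$ comes from two sources: each wave-front crossing the line $y$ contributes $|\Delta v^\nu|=2\alpha|h(\eps)|\le 2\alpha\cosh(q)|\eps|$, and these crossings sum to $\le 2\alpha\cosh(q)W^\nu_y(T)$; and at each time step $t^n<T$ there is a jump $v^\nu(y,t^n+)-v^\nu(y,t^n-)=-M\DT\,v^\nu(y,t^n-)$, which is $\le M\DT\,\widetilde C_0$ by~\eqref{eq:unif-bounds-v_nu} and, summed over the $O(T/\DT)$ time steps, is $\le M\widetilde C_0(T+\DT)$. Adding the two terms gives the second bound in~\eqref{bound-bv-time}, with $C$ independent of $\nu$ and $y$. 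The cases $y=0,M$ are treated in the same way, using $W^\nu_0(T)=L_{0,out}(T)$, $W^\nu_M(T)=L_{M,out}(T)$ and~\eqref{bound-tv-y0-yM}.

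\textit{Step 2: the $L^1$-in-time, Lipschitz-in-space estimates.} Fix $0\le y_1<y_2\le M$ and set $G(t)=\int_{y_1}^{y_2}u^\nu(y,t)\,dy$, $H(t)=\int_{y_1}^{y_2}v^\nu(y,t)\,dy$. By subadditivity of the total variation under integration in $y$ and Step~1, $\tv\{G;(0,T)\}\le\int_{y_1}^{y_2}\tv\{u^\nu(y,\cdot);(0,T)\}\,dy\le C(y_2-y_1)$ and, likewise, $\tv\{H;(0,T)\}\le C(y_2-y_1)$. Integrating the first equation of~\eqref{eq:system_Lagrangian} over $y\in(y_1,y_2)$ gives $G'(t)=v^\nu(y_2,t)-v^\nu(y_1,t)+E(t)$ for a.e.\ $t$, where $E(t)$ is the error produced by the approximate rarefaction fronts present in the strip at time $t$; integrating in $t$, $\int_0^T|v^\nu(y_2,t)-v^\nu(y_1,t)|\,dt\le\tv\{G;(0,T)\}+\int_0^T|E(t)|\,dt$, which is~\eqref{bound-integral-time} once the error term is shown to be $\le\mathrm{const}\cdot(y_2-y_1)$. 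Integrating the second equation of~\eqref{eq:system_Lagrangian} over $y\in(y_1,y_2)$, and noting that, because of the operator splitting, the source acts only through the time-step jumps $\Delta H(t^n)=-M\DT\,H(t^n-)$ and therefore does not affect the absolutely continuous part of $\partial_t H$, one finds that this a.c.\ part equals $(\alpha^2/u^\nu)(y_1,t)-(\alpha^2/u^\nu)(y_2,t)$ up to a rarefaction error; hence $\int_0^T|(\alpha^2/u^\nu)(y_2,t)-(\alpha^2/u^\nu)(y_1,t)|\,dt\le\tv\{H;(0,T)\}+\int_0^T|E(t)|\,dt\le C(y_2-y_1)+\int_0^T|E(t)|\,dt$, and since $u\mapsto\alpha^2/u$ is bi-Lipschitz on the range of $u^\nu$ this gives the analogue of~\eqref{bound-integral-time} for $u^\nu$. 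The cases $y_1=0$ or $y_2=M$ follow by letting $y_1\to0+$, $y_2\to M-$ and invoking dominated convergence together with the piecewise-constant structure of $(u^\nu,v^\nu)$.

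\textit{Main obstacle.} The delicate point, and the only place where some care is needed, is to bound $\int_0^T|E(t)|\,dt$ by $\mathrm{const}\cdot(y_2-y_1)$ \emph{uniformly in $\nu$}, rather than merely by $C(y_2-y_1)+o(1)$. The error $E$ is supported on the rarefaction fronts, with density of order $\eps_f^2$ along a rarefaction front of size $\eps_f$. The estimate works because: (i) every rarefaction front has size $\le\eta_\nu\le1$ and never increases along its trajectory; (ii) by Remark~\ref{rem:2.1} and Lemma~\ref{Lemmauinfsup} its speed has modulus $\ge\alpha/u_{sup}>0$, so it spends time at most $(y_2-y_1)u_{sup}/\alpha$ inside a $y$-strip of width $y_2-y_1$; and (iii) the total strength of rarefaction fronts ever created on $[0,T]$ is bounded uniformly in $\nu$ — those created at the time steps are controlled by~\eqref{time-step-reflected-wave} (total $\le C_1^-(q)MqT$, see Proposition~\ref{prop:estimate-time-step}), while those created at same-family interactions are summably controlled by balancing the decay~\eqref{eq:refined-decay-Lxi} of $L_\xi$ against its time-step increase~\eqref{eq:Delta-L-xi-time-step}. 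Combining (i)--(iii), $\int_0^T|E(t)|\,dt$ is bounded by $\eta_\nu$ times $(y_2-y_1)$ times a $\nu$-independent constant; in particular it is $\le\mathrm{const}\cdot(y_2-y_1)$ for all $\nu$ and tends to $0$ as $\nu\to\infty$.
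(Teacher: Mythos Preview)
Your Step~1 matches the paper's argument essentially verbatim.

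Step~2 is where you diverge. The paper does not integrate the PDE at all; it argues purely kinematically. Since every wave-front satisfies $|y'(t)|\ge\alpha/u^\nu_{sup}$, each front can be reparametrized as $t=t(y)$ with $|t'(y)|\le u^\nu_{sup}/\alpha$, and then the standard inequality ``$L^1$-Lipschitz constant $\le$ (TV bound)$\times$(speed bound)'' applies with the roles of $y$ and $t$ interchanged, using the bound on $\tv\{v^\nu(y,\cdot);[0,T)\}$ from Step~1. The fractional-step jumps lie along the horizontal lines $t=t^n$ and so contribute nothing to $v^\nu(y_2,t)-v^\nu(y_1,t)$ for $t\ne t^n$. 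No approximation error $E(t)$ appears, because the argument uses only that $(u^\nu,v^\nu)$ is piecewise constant with discontinuities of bounded slope in the $(y,t)$ plane; it never invokes the equations.

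Your route---integrate the PDE over the strip, then bound the defect---can be made to work, and your control of the rarefaction part of $E$ is correct: the total creation strength of rarefaction fronts is indeed uniformly summable via \eqref{time-step-reflected-wave} and \eqref{eq:refined-decay-Lxi}--\eqref{eq:Delta-L-xi-time-step}, and each contributes error density $O(\eps_f^2)\le\eta_\nu|\eps_f|$. The gap is that $E(t)$ also contains the shock speed-modification error. The scheme permits altering any front speed by up to $\eta_\nu$ to avoid coincident interactions, and for a shock of size $|\eps|$ this contributes $|\Delta u|\cdot O(\eta_\nu)=O(|\eps|\,\eta_\nu)$ to $E(t)$ with $|\eps|$ possibly of order $q$, not $\eta_\nu$. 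Summing over shocks in the strip gives only $\int_0^T|E_{\text{shock}}|\,dt\le C\eta_\nu T$, which lacks the factor $(y_2-y_1)$ you need for \eqref{bound-integral-time} to hold uniformly in $\nu$. One can close this by arguing that such modifications may be confined to a set of times of arbitrarily small measure, but that is an extra step you have not supplied. The paper's kinematic argument sidesteps the issue entirely and is considerably shorter.
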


\begin{proof}
The first inequality in \eqref{bound-bv-time} follows by \eqref{eq:bound-on-TV_at_y} and from the uniform bounds on $u^\nu$, given at \eqref{uinfsup} and \eqref{eq:unif-bounds}.
Indeed, by using the mean value theorem for the function $\ln (u)$, we find that
\begin{align*}
 \tv\{u^\nu(y,\cdot);[0,T)]\} &\le 2 u_{sup}^\nu W^\nu_y(T)\\
 &\le  2 u_{sup}^\nu \, q \left( \widetilde C_1 + \widetilde C_2 M T \right)\,.
\end{align*}
where we used the bound $L_{in}(t)\le q$ for all $t\ge 0$. By means of \eqref{eq:unif-bounds}, then the first inequality in \eqref{bound-bv-time} holds for a suitable $C>0$ 
that is independent of $\nu$.

To prove the second inequality in \eqref{bound-bv-time}, we proceed as in \eqref{eq:tv-v_m} but, in addition, we have to take into account the variation at the time steps. 
Therefore let $n$ be the integer defined by $t^n < T \le t^{n+1}$, then
\begin{align*}
\tv \{v^\nu(y,\cdot);[0,T)\} &\le 2\alpha \cosh(q)\,  W^\nu_y(T) + \sum_{j=1}^n |\Delta v^\nu(y,t^j)|\\
&\le 2\alpha \cosh(q)\, q \left( \widetilde C_1  + \widetilde C_2 M T \right) +  M \|v^\nu\|_\infty n \DT\,.
\end{align*}
Thanks to the $L^\infty$ bound on $v^\nu$ in~\eqref{eq:unif-bounds-v_nu}, the last term here above is bounded by $ M \widetilde C_0 T$. Then, we conclude that there exists a constant $C=C(T)>0$
such that both inequalities in \eqref{bound-bv-time} hold true.

Let's now turn to the stability property \eqref{bound-integral-time}, concerning the map $y\mapsto \int_0^T v^\nu(y,t)dt$. By Remark~\ref{rem:2.1} and Lemma~\ref{Lemmauinfsup}, the propagation speed of a wave $y(t)$ satisfies 
$|y'(t)|\ge \alpha/u^\nu_{sup}$; therefore we can reparametrize it with $t=t(y)$, and its propagation speed is bounded by $u^\nu_{sup}/\alpha$.
The fractional step variation of $v^\nu$ does not affect the $y$-variation of the integral in $t$, since it induces discontinuities on $v^\nu$ across straight lines, which are constant in $y$.

Thanks to the uniform bound on the $t$-total variation, one has
\begin{equation*} 
\left\|v^\nu\left(y_1,\cdot \right) - v^\nu\left(y_2,\cdot \right)\right\|_{L^1(0,T)} \le  L \,|y_2 - y_1| \qquad \forall\, y_1\,, y_2 \in [0,M]
\end{equation*}
where $L$ is the product of a bound on $\sup_y \tv\{v^\nu(y,t); [0,T)\}$ and a bound on the propagation speed. 
We remark that $L$ may depend on $T$ and can be chosen independently of $\nu$. A similar procedure leads to an analogous estimate for $u^\nu$.
This concludes the proof of Lemma~\ref{lem:vertical-bounds}.
\end{proof}

\subsection{Convergence of approximate solutions}\label{S2.8}
In this subsection, we show a compactness property of the sequence of approximate solutions $(u^{\nu}, v^{\nu})(y,t)$, constructed in Subsection~\ref{S2.3},
as $\nu\to\infty$ and $\DT=\DT_\nu\to 0$. In particular, up to a subsequence, we find in the limit an entropy weak solution of the problem \eqref{eq:system_Lagrangian}-\eqref{eq:init-data-lagr}-\eqref{eq:bc-lagrangian}, in the sense specified here below.

\begin{theorem}\label{Th-1-lagr} Assume that $(u_0,v_0)\in BV(0,M)$ and satisfies \eqref{eq:init-data-lagr}:
$$
\essinf_{(0,M)} u_0 >0\,, \qquad  \int_0^M v_0 =0\,.
$$
Then there exists a subsequence of $(u^{\nu}, v^{\nu})$ 
which converges, as $\nu\to\infty$, to a function $(u,v)$ in $L^1_{loc}\left((0,M)\times[0,+\infty) \right)$, which
satisfies the following properties:

\begin{itemize}
\item [a)] the map $t\mapsto (u,v)(\cdot,t)\in L^1(0,M)$ is well-defined and it is Lipschitz continuous in the $L^1$--norm. Moreover it satisfies 
\begin{equation*}
(u,v)(\cdot,0)=(u_0,v_0)\,,     
\end{equation*}
and for all $t>0$
\begin{equation}\label{eq:Linfty-bound-uv}
0<u_{inf}\le u(y,t) \le u_{sup}\,,\quad 
    |v(y,t)|\le \widetilde C_0
\qquad \mbox{for a.e.}\ y
\end{equation}   
with $u_{inf}$, $u_{sup}$ given in \eqref{eq:unif-bounds} and $\widetilde C_0$ as in  \eqref{eq:unif-bounds-v_nu}\,. 
In particular, as $\nu\to\infty$ 
\begin{equation}\label{eq:conv-int_u_dy}
    \int_0^y u^\nu(y',t)\, dy'  ~\to~ \int_0^y u (y',t)\, dy'\qquad \forall\, y\in(0,M)\,,\ t\ge 0\,.
\end{equation}

\item [b)] For every $T>0$, the map $y\mapsto (u,v)(y,\cdot) \in L^1(0,T)$ is well defined on $[0,M]$ and Lipschitz continuous, with Lipschitz constant $\widetilde L$ possibly depending on $T$. In particular $v(0+,t)$ 
is well defined in $L^1_{loc}([0,+\infty))$ and, as $\nu\to\infty$, 
\begin{equation}\label{eq:conv-int_v-0_dt}
     \int_0^t v^\nu(0+,s) \, ds ~\to~ \int_0^t v(0+,s) \, ds\qquad \forall\, t>0\,.
\end{equation}
\end{itemize}
\end{theorem}

\begin{proof} 
To prove the convergence, we show that the sequence $(u^{\nu},v^{\nu})$ satisfies the assumptions of Helly's compactness theorem (see Th. 2.3 in Ref.~\cite{Bressan_Book}). 
More precisely, we employ a variant of that theorem, since we need to pass to the limit in the traces of $(u^{\nu},v^{\nu})$ both in $y$ and $t$. The proof of this variant follows the same strategy of Th. 2.3 in Ref.~\cite{Bressan_Book}, and relies on the pointwise convergence of $(u^{\nu},v^{\nu})(\cdot,t)$ for all rational $t$  and of $(u^{\nu},v^{\nu})(y,\cdot)$ for all rational $y$.

Then, we show that compactness assumptions hold. Thanks to Lemmas~\ref{lem:bounds-on-bv}--\ref{Lemmauinfsup}, the quantities $\tv\{u^{\nu}(\cdot,t)\}$, $\tv\{v^{\nu}(\cdot,t)\}$ are uniformly bounded in $t\ge 0$ and $\nu\in\N$, and the same holds for the $L^\infty$ norm of $(u^{\nu},v^{\nu})$ in $(y,t)$. 
The Lipschitz dependence on time of the map
$
t\mapsto\int_0^M (u^{\nu}, v^{\nu})(y,t)dy
$ can be proved similarly to the proof of \eqref{bound-integral-time}. 
More precisely, one can prove that
\begin{equation}\label{eq:int-y-lip-cont-t}
\left\|u^\nu\left(\cdot,t_2 \right) - u^\nu\left(\cdot,t_1 \right)\right\|_{L^1(0,M)}
\le \widetilde L \,|t_2 - t_1| \qquad 
\end{equation}
where $\widetilde L$ is the product of a bound on $\tv\{u^\nu(\cdot,t); (0,M)\}$ and a bound on the propagation speed. 
Thanks to Lemmas~\ref{prop:equivalence-Lin}--\ref{Lemmauinfsup}, the constant $\widetilde L$ can be chosen independently of $y$, $t$ and $\nu$.

Taking also into account of Lemma~\ref{lem:vertical-bounds}, by Helly's theorem there exists a subsequence of $(u^{\nu},v^{\nu})$ that converges to a limit $(u,v)(y,t)$ in $L^1_{loc}((0,M)\times [0,+\infty))$ and such that
\begin{align*}
&(u^{\nu},v^{\nu})(\cdot,t)\to (u,v)(\cdot,t)\quad\mbox{ in } L^1(0,M)
\,,\quad\forall\, t \\
&(u^{\nu},v^{\nu})(y,\cdot)\to (u,v)(y,\cdot)\quad\mbox{ in } L^1(0,T)
\,, \quad\forall\, y\in (0,M)\,,\ T>0.
\end{align*}
As a consequence, the convergence in \eqref{eq:conv-int_u_dy} holds for every $y\in(0,M)$. Similarly, the convergence in \eqref{eq:conv-int_v-0_dt} holds, thanks also to \eqref{bound-integral-time}. Moreover, bounds in \eqref{eq:Linfty-bound-uv} hold as well.
\end{proof}

{\bf Proof of Theorem~\ref{newthm}.}\quad
Let $(u,v)\in L^1_{loc}\left((0,M)\times[0,+\infty) \right)$ be the function obtained in Theorem~\ref{Th-1-lagr}. 
Then it satisfies the initial data as in \eqref{eq:init-data-uv}, as well as the property \eqref{eq:Linfty-bound-uv-new} 
because of \eqref{eq:Linfty-bound-uv}. Therefore it remains to show that $(u,v)$ is a distributional solution to 
\begin{equation}\label{eq:system_Lagrangian-2}
\begin{cases}
\partial_\t u -  \partial_y  v  = 0, &\\
\partial_\t v  +  \partial_y (\alpha^2/u) =   - M v & 
\\ 
\end{cases} 
\end{equation}
on $(0,M)\times(0,+\infty)$, and that for every convex entropy $\tilde\eta$ for the system \eqref{eq:system_Lagrangian-2}, 
with corresponding entropy flux $\tilde q$, the following inequality holds in ${\DD}'((0,M)\times(0,+\infty))$:
\begin{equation*}
    \partial_t \tilde\eta(u, v) + \partial_x \tilde q(u, v)\le -M \tilde\eta_v v\,.
\end{equation*}%
Since the test functions are supported in the open set $(0,M)\times(0,+\infty)$ and hence the boundaries are not touched, 
then the proof is the same as for the Cauchy problem in Ref.~\cite{AmadoriGuerra01} and we omit it.

\Section{Proof of Theorem~\ref{Th-1}} \label{Sect:3}
This section is devoted to the proof of Theorem~\ref{Th-1}. The structure is the following: Approximate solutions to system~\eqref{eq:system_Eulerian} in Eulerian variables and the approximate boundaries $a^\nu$, $b^\nu$ are defined in Subsection~\ref{Sect:3.1}. Convergence of the free boundaries and uniform bounds are obtained in Subsections~\ref{Sect:3.2} and~\ref{Sect:3.3} respectively. Next, the admissibility conditions are investigated and the total mass and momentum are estimated in Subsections~\ref{Sect:3.4} and~\ref{Sect:3.5} at the level of the approximate sequence. In Subsection~\ref{Sect:3.6}, the proof of Theorem~\ref{Th-1} is completed by combining the previous analysis.

\subsection{Change of variables, from Lagrangian to Eulerian} 
\label{Sect:3.1}
For each $\nu\in\N$, define the approximate boundaries:
\begin{equation}\label{def:a-nu}
    a^\nu(t)\, \dot = \,a_0 + \int_0^t v^\nu(0+,s) ds\,.
\end{equation}
and
\begin{equation}\label{def:b-nu}
    b^\nu(t)~ \dot = ~ 
    a^\nu(t) + \int_0^M u^\nu\left(y,t \right) dy \,.
\end{equation}
These functions have the role of \textit{free boundaries} that delimit the region where the density $\rho^\nu$ is positive, in the Eulerian variables. See Figure~\ref{fig:setting}.

\begin{figure}[htbp]
{\centering \hfill

\scalebox{0.8}{
\includegraphics{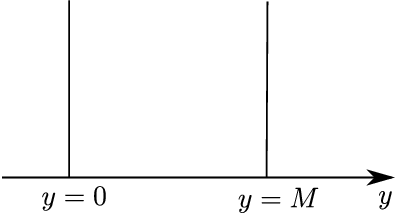}
\quad \includegraphics{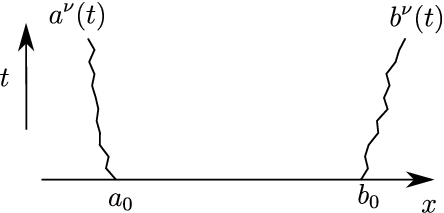}}\par}
\caption{On the left, the domain in Lagrangian 
variables for $(u^\nu,v^\nu)$ and on the
right: the domain in Eulerian variables for $(\rho^\nu,\vv^\nu)$.}
 \label{fig:setting}
\end{figure}

\begin{lemma}\label{S3:lemma1} Let
\begin{equation}\label{eq:def-of-inverse-chi}
    x^\nu(y,t) := a^\nu(t) + \int_0^y u^\nu\left(y',t \right) dy'
    \,,\qquad y\in (0,M),\ t\ge 0
\end{equation}
and
\begin{align}
I^\nu(t) &=~  \left(a^\nu(t), b^\nu(t)\right)\,, \nonumber\\
\Omega^\nu   &=~ \{ (x,t);\ t\ge 0\,,\  x\in I^\nu(t) \}\subset \R\times[0,+\infty)\,. \nonumber
\end{align}
Then the map
\begin{align*}
    (y,t)&\mapsto \left(x^\nu(y,t),t\right) \\
  (0,M)\times [0,\infty)&\mapsto \Omega^\nu  
\end{align*}
is invertible, and
\begin{align}
\label{eq:bi-Lipsch_x}
   u^\nu_{inf} |y_2-y_1| \le |x^\nu(y_2,t)- x^\nu(y_1,t)|&\le u^\nu_{sup} |y_2-y_1|\,, \qquad y_1,\ y_2\in (0,M)\,,\\[1mm]
\label{eq:xnu-Lipsch_t}
    |x^\nu(y,t_2)- x^\nu(y,t_1)| &\le L \, |t_2-t_1|\qquad \qquad  
    t_1,\ t_2\ge 0
\end{align}
for some constant $L>0$ independent of $y\in(0,M)$, $\nu$, $t$. Moreover, $a^\nu(\cdot)$, $b^\nu(\cdot)$ are Lipschitz continuous on $[0,\infty)$, uniformly with respect to $\nu$. 
\end{lemma}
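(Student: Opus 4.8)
The plan is to obtain all four assertions of the lemma as direct consequences of the uniform bounds on $u^\nu$ and $v^\nu$ proved in Lemmas~\ref{lem:bounds-on-bv}--\ref{Lemmauinfsup}, together with the time-Lipschitz $L^1$ estimate~\eqref{eq:int-y-lip-cont-t} used in the proof of Theorem~\ref{Th-1-lagr}; no new interaction analysis at the level of the front-tracking scheme is required. The only point to watch is that every Lipschitz constant produced be independent of $\nu$, $y$ and $t$, for which one systematically replaces the $\nu$-dependent bounds $u^\nu_{inf}$, $u^\nu_{sup}$ by their $\nu$-uniform counterparts $u_{inf}$, $u_{sup}$ from~\eqref{eq:unif-bounds} whenever a $\nu$-free estimate is needed.

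First I would establish~\eqref{eq:bi-Lipsch_x}. For fixed $t$, the map $y\mapsto x^\nu(y,t)$ defined in~\eqref{eq:def-of-inverse-chi} is Lipschitz in $y$, and for $y_1<y_2$ one has
\[
x^\nu(y_2,t)-x^\nu(y_1,t)=\int_{y_1}^{y_2}u^\nu(y',t)\,dy',
\]
so the bounds $0<u^\nu_{inf}\le u^\nu\le u^\nu_{sup}$ of~\eqref{uinfsup} give~\eqref{eq:bi-Lipsch_x} immediately and show, in particular, that $y\mapsto x^\nu(y,t)$ is strictly increasing. Since $x^\nu(0,t)=a^\nu(t)$ and, by~\eqref{def:b-nu}, $x^\nu(M,t)=a^\nu(t)+\int_0^M u^\nu(y,t)\,dy=b^\nu(t)$, this map is a continuous, strictly increasing bijection of $(0,M)$ onto the open interval $I^\nu(t)=(a^\nu(t),b^\nu(t))$. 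Appending the identity in the time variable, $(y,t)\mapsto(x^\nu(y,t),t)$ is therefore a bijection of $(0,M)\times[0,\infty)$ onto $\Omega^\nu$, whose inverse is $(x,t)\mapsto\big((x^\nu(\cdot,t))^{-1}(x),\,t\big)$.

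Next I would prove the time-Lipschitz bound~\eqref{eq:xnu-Lipsch_t}. For $t_1\le t_2$, split
\[
x^\nu(y,t_2)-x^\nu(y,t_1)=\big(a^\nu(t_2)-a^\nu(t_1)\big)+\int_0^y\big(u^\nu(y',t_2)-u^\nu(y',t_1)\big)\,dy'.
\]
By~\eqref{def:a-nu} the first term equals $\int_{t_1}^{t_2}v^\nu(0+,s)\,ds$, hence is bounded in absolute value by $\widetilde C_0\,|t_2-t_1|$ thanks to the uniform $L^\infty$ bound~\eqref{eq:unif-bounds-v_nu} on $v^\nu$ (which applies to the boundary trace $v^\nu(0+,\cdot)$ as well). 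The second term is bounded in absolute value by $\|u^\nu(\cdot,t_2)-u^\nu(\cdot,t_1)\|_{L^1(0,M)}\le\widetilde L\,|t_2-t_1|$ by~\eqref{eq:int-y-lip-cont-t}, with $\widetilde L$ independent of $\nu$. This yields~\eqref{eq:xnu-Lipsch_t} with $L=\widetilde C_0+\widetilde L$. The Lipschitz continuity of $a^\nu$, uniformly in $\nu$, is exactly the first-term estimate, and that of $b^\nu=x^\nu(M,\cdot)$ follows from~\eqref{eq:xnu-Lipsch_t} evaluated at $y=M$; both constants are $\nu$-free.

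I do not expect a genuine obstacle here: the substance of the lemma is contained in the already-established a priori bounds, and the present statement merely packages them into geometric information about the change of coordinates. The only subtle, and minor, point is bookkeeping — tracking the $\nu$-uniformity of the constants $u_{inf}$, $u_{sup}$, $\widetilde C_0$, $\widetilde L$ — together with the elementary observations that $x^\nu(\cdot,t)$ is continuous (so that surjectivity onto $I^\nu(t)$ follows from the intermediate value theorem) and that the integrals defining $x^\nu$, $a^\nu$, $b^\nu$ are absolutely continuous in their respective variables.
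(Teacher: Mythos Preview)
Your proposal is correct and follows essentially the same approach as the paper's own proof: the bi-Lipschitz bound in $y$ comes directly from~\eqref{uinfsup}, the time-Lipschitz bound from splitting $x^\nu(y,t_2)-x^\nu(y,t_1)$ into the $a^\nu$-part (controlled by $\widetilde C_0$ via~\eqref{eq:unif-bounds-v_nu}) and the $\int_0^y u^\nu$-part (controlled by $\widetilde L$ via~\eqref{eq:int-y-lip-cont-t}), yielding the same constant $L=\widetilde C_0+\widetilde L$. The paper's proof is slightly terser but the content and the constants are identical.
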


\begin{proof}
From the bounds on $u^\nu$ in \eqref{uinfsup}, we deduce that the map
\begin{align*}
 y&\mapsto x^\nu(y,t)\\
(0,M) &\mapsto I^\nu(t)
\end{align*}
is Lipschitz continuous, strictly increasing, bijective  and it satisfies \eqref{eq:bi-Lipsch_x}.

Concerning \eqref{eq:xnu-Lipsch_t}, the definition \eqref{eq:def-of-inverse-chi} yields
\begin{align*}
     |x^\nu(y,t_2)- x^\nu(y,t_1)| &\le |a^\nu(t_2)- a^\nu(t_1)| +  \int_0^y \left| u^\nu\left(y',t_2 \right) - u^\nu\left(y',t_1 \right)\right| dy'\\
     &\le \left( \widetilde C_0 + \widetilde L \right)|t_2 - t_1| \;,
\end{align*}
where we used the bound on $\|v^\nu\|_\infty$ in \eqref{eq:unif-bounds-v_nu},
and \eqref{eq:int-y-lip-cont-t}. Then \eqref{eq:xnu-Lipsch_t} holds for 
$L = \widetilde C_0 + \widetilde L$\,.  In particular, for 
$y=0$ we find that $a^\nu(t)$ is Lipschitz continuous with Lipschitz constant $\widetilde C_0$, and for $y\to M$, we deduce that also $b^\nu(t)$ is Lipschitz continuous on $[0,+\infty)$ with constant $\widetilde C_0 + \widetilde L$.
\end{proof}

For every $t\ge 0$, define the map
\begin{align*}
(x,t)&\mapsto y=\chi^\nu(x,t)\\
I^\nu(t) \times[0,\infty)&\mapsto (0,M)  
\end{align*}
as the inverse of $y\mapsto x^\nu(y,t)$\,.
Then define $\rho^\nu$, $\vv^\nu$, $\mm^\nu$ as follows,
\begin{align}
\label{def-rho-v-m-in}
x\in I^\nu(t):& \quad 
\begin{cases}
\rho^\nu(x,t) = \{u^\nu (\chi^\nu(x,t),t)\}^{-1}
\,,&\\[1mm]
\vv^\nu(x,t)=v^\nu (\chi^\nu(x,t),t)\,,&\\[1mm]
\mm^\nu(x,t)  = \rho^\nu(x,t) \vv^\nu(x,t)\,,
\end{cases}
    \\[2mm]
   \label{def-rho-m-out}
     x\not\in I^\nu(t):& \qquad  \rho^\nu(x,t) =0  ~= \mm^\nu(x,t)\,. 
\end{align}
The specific value of $\vv^\nu$ outside $I^\nu(t)$ is not relevant.

About the measures $dy$ on $(0,M)$ and $dx$ on $I^\nu(t)$ for $t\ge0$, they are mutually absolutely continuous and we have
\begin{equation*}
    dx = \frac{\partial x^\nu}{\partial y} dy = u^\nu(y,t)dy\,,\qquad dy = \frac{1}{u^\nu(\chi^\nu(x,t),t)} dx  =  \rho^\nu(x,t) dx\,.
\end{equation*}
Therefore, we have the identities for all $\nu$ and $t\ge 0$,
\begin{align}\label{eq:constant-mass-nu}
    &\int_{I^\nu(t)} \rho^\nu(x,t) \, dx = \int_0^M dy = M\\
    &\int_{I^\nu(t)} dx = \int_0^M  u^\nu(y,t)\, dy = b^\nu(t) - a^\nu(t)\,. 
    \label{eq:support-length}
\end{align}

\subsection{Convergence of the free boundaries}\label{Sect:3.2}

Next, we consider the limiting behavior of $a^\nu$, $b^\nu$ as $\nu\to\infty$. We define
\begin{equation}\label{def:a-b}
    a(t)\, \dot = \,a_0 + \int_0^t v(0+,s) ds\,,\qquad  b(t)~ \dot = ~ 
    a(t) + \int_0^M u\left(y',t \right) dy' \,,
\end{equation}
with $(u,v)$ being the solution to system~\eqref{eq:system_Lagrangian-2} obtained in Theorem~\ref{Th-1-lagr}. 
The convergence result is stated in the following lemma.

\begin{lemma}\label{lem:conv-a-b}
As $\nu\to\infty$,
\begin{equation*}
    a^\nu(\cdot)\to a(\cdot)\,,\qquad b^\nu(\cdot)\to b(\cdot)
\end{equation*}
uniformly on compact subsets of $[0,+\infty)$.
\end{lemma}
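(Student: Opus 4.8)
The plan is to deduce the uniform convergence of the free boundaries from the convergence results already established in Theorem~\ref{Th-1-lagr}, exploiting the integral representations \eqref{def:a-nu}--\eqref{def:b-nu} and \eqref{def:a-b}. The key point is that both $a^\nu$ and $b^\nu$ are Lipschitz continuous on $[0,+\infty)$ with a Lipschitz constant independent of $\nu$ (by Lemma~\ref{S3:lemma1}), so it suffices to prove pointwise convergence on a dense set of times and then invoke the Arzel\`a--Ascoli theorem to upgrade to uniform convergence on compact subsets.

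First I would treat $a^\nu$. By \eqref{def:a-nu} and \eqref{def:a-b},
\begin{equation*}
a^\nu(t) - a(t) = \int_0^t \left( v^\nu(0+,s) - v(0+,s) \right) ds\,,
\end{equation*}
so $|a^\nu(t)-a(t)| \le \| v^\nu(0+,\cdot) - v(0+,\cdot)\|_{L^1(0,t)}$. By part b) of Theorem~\ref{Th-1-lagr}, $v^\nu(0+,\cdot)\to v(0+,\cdot)$ in $L^1(0,T)$ for every $T>0$ (this is exactly the content of \eqref{eq:conv-int_v-0_dt} together with the uniform Lipschitz bound in time of the traces), hence $a^\nu(t)\to a(t)$ for every $t\ge0$, and in fact uniformly on $[0,T]$. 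Next, for $b^\nu$, subtract the defining formulas to get
\begin{equation*}
b^\nu(t) - b(t) = \left( a^\nu(t) - a(t) \right) + \int_0^M \left( u^\nu(y,t) - u(y,t) \right) dy\,.
\end{equation*}
The first term tends to $0$ uniformly on compacts by the previous step; for the second, part a) of Theorem~\ref{Th-1-lagr} gives $u^\nu(\cdot,t)\to u(\cdot,t)$ in $L^1(0,M)$ for \emph{every} $t\ge 0$, so $\int_0^M (u^\nu(y,t)-u(y,t))\,dy \to 0$ pointwise in $t$. Thus $b^\nu(t)\to b(t)$ pointwise.

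To conclude, I would note that the families $\{a^\nu\}$ and $\{b^\nu\}$ are uniformly Lipschitz on $[0,+\infty)$ (Lemma~\ref{S3:lemma1}) and pointwise convergent, hence equicontinuous and pointwise bounded on every compact interval; by Arzel\`a--Ascoli the convergence is uniform on compact subsets of $[0,+\infty)$, and the (necessarily unique) limits are $a(\cdot)$ and $b(\cdot)$. The only mild subtlety — the ``hard part'' such as it is — is making sure the convergence $u^\nu(\cdot,t)\to u(\cdot,t)$ in $L^1(0,M)$ and $v^\nu(0+,\cdot)\to v(0+,\cdot)$ in $L^1(0,T)$ are invoked with the correct quantifiers (the former for all $t$, the latter for all $T$, both already guaranteed by Helly's theorem and the trace estimates in Lemma~\ref{lem:vertical-bounds}); once these are in hand the argument is routine.
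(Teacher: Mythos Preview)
Your proposal is correct and follows essentially the same approach as the paper: establish pointwise convergence of $a^\nu(t)$ and $b^\nu(t)$ from the integral convergences \eqref{eq:conv-int_u_dy}--\eqref{eq:conv-int_v-0_dt}, then upgrade to local uniform convergence via the uniform Lipschitz bound from Lemma~\ref{S3:lemma1}. The paper phrases the last step simply as ``pointwise convergence plus uniform Lipschitz implies local uniform convergence'' rather than invoking Arzel\`a--Ascoli explicitly, but the content is identical.
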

\begin{proof} By \eqref{eq:conv-int_u_dy} and \eqref{eq:conv-int_v-0_dt}, for every $t>0$ and $\nu\to\infty$, one has
\begin{equation*}
    \int_0^t v^\nu(0+,s) ds\to \int_0^t v(0+,s) ds\,,\qquad 
    \int_0^M u^\nu\left(y',t \right) dy'\to \int_0^M u\left(y',t \right) dy'\,.
\end{equation*}
Therefore,  $a^\nu(t)\to a(t)$ and $b^\nu(t)\to b(t)$ pointwise. Since $a^\nu(\cdot)$ and $b^\nu(\cdot)$ are Lipschitz continuous on $[0,+\infty)$ uniformly in $\nu$ and $t$, then the pointwise convergence implies the local uniform convergence on $[0,\infty)$.
\end{proof}

\subsection{Uniform bounds on approximate solutions}
\label{Sect:3.3}

Here, we deduce some uniform bounds on the $L^\infty$ norm and the total variation of $\rho^\nu$, $\vv^\nu$, 
that follow from the corresponding bounds on $u^\nu$, $v^\nu$.

From \eqref{uinfsup}, we find immediately that
\begin{equation*}
    0<(u^\nu_{sup})^{-1}\le \rho^\nu(x,t) \le (u^\nu_{inf})^{-1} \qquad \forall\, x\in I^\nu(t) \,, \quad t\ge 0
\end{equation*}
while, from \eqref{eq:identity-tvlnu} and \eqref{LLin-decreases}, we deduce that
\begin{equation*}
    \frac 12 \tv \{\ln(\rho^\nu)(\cdot,t);I^\nu(t)\} = L_{in}(t) \le L_{in}(0+)\le q\,.
\end{equation*}
Therefore, we conclude that 
\begin{align*}
     \tv \{\rho^\nu(\cdot,t);\R \} &\le
      \tv \{\rho^\nu(\cdot,t); I^\nu(t)\} + 2 \|\rho^\nu(\cdot,t)\|_\infty\\
     &\le u^\nu_{sup} \tv \{\ln(\rho^\nu)(\cdot,t);I^\nu(t)\} + \frac 2  {u^\nu_{inf}} 
     \end{align*}
and hence it is uniformly bounded in terms of $\nu$ and $t$\,.

About $\vv^\nu$, from its definition \eqref{def-rho-v-m-in} it is clear that the $L^\infty$ bounds are the same valid for $v^\nu(\cdot,t)$. 
Moreover,  thanks to \eqref{eq:unif-bound-tv-v}, we obtain
\begin{align*}
\tv \{\vv^\nu(\cdot,t);I^\nu(t)\} = \tv \{v^\nu(\cdot,t);(0,M)\} \le C_1 \,.
\end{align*}%
As well, the total variation in space of the \textit{momentum} $\mm^\nu$ given in \eqref{def-rho-v-m-in} and \eqref{def-rho-m-out},
is uniformly bounded in terms of $\nu$ and $t$  thanks to the bounds above and the $L^\infty$ bounds on $\rho^\nu$ and $\vv^\nu$\,.

\subsection{Admissibility conditions and weak formulation} 
\label{Sect:3.4}
Let $0<y_1^\nu(t)<\ldots<y_{N^\nu(t)}^\nu(t)<M$ be the location of discontinuities for $(u^\nu,v^\nu)(\cdot,t)$, as in 
\eqref{eq:disc-points}. Define
\begin{equation*}
    x^\nu_j(t) = a^\nu(t) + \int_0^{y_j^\nu(t)} u^\nu\left(y',t \right) dy'\qquad j=1,\ldots,N^\nu(t)
\end{equation*}
and 
\begin{equation*}
x^\nu_0(t) = a^\nu(t)\,,\qquad x^\nu_{N^\nu(t)+1}(t) = b^\nu(t)\,, 
\end{equation*}
which are the discontinuity locations of $(\rho^\nu,\mm^\nu)$ over $\R$. 

In the next lemma, we show that the Rankine-Hugoniot conditions are approximately satisfied across the piecewise linear curves $x^\nu_j(t)$.
\begin{lemma}\label{lem:RH-cond-rhov}
There exists a constant $C>0$ independent of $j=0,\ldots,N^\nu(t)+1$, $t$ and $\nu$ such that
\begin{equation}\label{RH-rho-tilde v}
    \frac{d}{dt} x^\nu_j(t) = \frac{\Delta \mm^\nu}
    {\Delta \rho^\nu}\left(x^\nu_j(t)\right) + \O(1)\eta_\nu\,, \quad {j=0,\dots,  N^\nu(t)+1}
\end{equation}
\begin{equation}\label{RH-rho-tilde m}
    \frac{d}{dt} x^\nu_j(t) = \frac{\Delta \left( \frac{(\mm^\nu)^2}{\rho^\nu}+p(\rho^\nu)\right)
    }{\Delta \mm^\nu}\left(x^\nu_j(t)\right) + \O(1)\eta_\nu\,, \quad {j=1,\dots,  N^\nu(t)}
\end{equation}
and $|\O(1)|\le C\,$.
\end{lemma}
\begin{proof} First we prove \eqref{RH-rho-tilde v} for all $j=0,\ldots,N^\nu(t)+1$ by dividing the proof into the following three cases.

$\bullet$ \fbox{$j=0$}\quad First we consider the case $j=0$, that is, $x^\nu_0(t) = a^\nu(t)$. Recalling \eqref{def:a-nu}, one finds that
\begin{align}
\frac{d}{dt} x^\nu_0(t) & = v^\nu(0+,s) = \vv^\nu (x^\nu_0(t)+,t)\nonumber\\
& =  \frac{\mm^\nu(x^\nu_0+)  - \mm^\nu (x^\nu_0-)}{\rho^\nu (x^\nu_0+ )- \rho^\nu (x^\nu_0 - ) }\;, 
\label{eq:approx-speed-a_nu-b_nu}
\end{align}
where we used \eqref{def-rho-v-m-in}, \eqref{def-rho-m-out}. Hence \eqref{RH-rho-tilde v} holds for $j=0$, with no error term.

\smallskip
$\bullet$ \fbox{$j=1,\ldots,N^\nu(t)$}\quad
Using \eqref{eq:def-of-inverse-chi} and~\eqref{eq:approx-speed-a_nu-b_nu}, the propagation speed of $x^\nu_j$ satisfies
\begin{align*}
\frac{d}{dt} x^\nu_j(t) = v^\nu(0+,t) + 
 \frac{d}{dt} \int_{0}^{y_j^\nu(t)} u^\nu(y,t)dy\,. 
\end{align*}
Let's focus on the last term. For the discontinuity at $y^\nu_\ell(t)$, we define $\lambda^\nu_\ell$ to be the exact speed given by the Rankine-Hugoniot condition for system~\eqref{eq:system_Lagrangian-2}:
\begin{equation}\label{eq:RH-v}
- \Delta v^\nu = \lambda^\nu_\ell \, \Delta u^\nu
\,,\quad 
\Delta(\alpha^2/u^\nu) = 
\lambda^\nu_\ell \, \Delta v^\nu
\qquad \mbox { at } y= y^\nu_\ell\,.    
\end{equation}
By construction of the scheme, one has that
\begin{equation*}
    |(y^\nu_j)' - \lambda^\nu_j| \le \eta_\nu\qquad \forall j\,.
\end{equation*}
Therefore, we have
\begin{align*}
 \frac{d}{dt} \int_{0}^{y_j^\nu(t)} u^\nu(y,t)dy &=  u^\nu(y^\nu_j-) \, (y_j^\nu)' - \sum_{\ell=1}^{j-1}  \Delta u^\nu(y^\nu_\ell) (y^\nu_\ell)' \\
 &= (I) + (II)\;,
\end{align*}
where
\begin{align*}
    (I)&=   u^\nu(y^\nu_j-) \, \lambda^\nu_j - \sum_{\ell=1}^{j-1} \Delta u^\nu(y^\nu_\ell)\lambda^\nu_\ell 
    \,,\\
    (II)&=   u^\nu(y^\nu_j-)\{(y_j^\nu)' - \lambda^\nu_j \} - \sum_{\ell=1}^{j-1}\Delta u^\nu(y^\nu_\ell) \{(y^\nu_\ell)'- \lambda^\nu_\ell  \}
\,. \end{align*}

About $(I)$, with the help of \eqref{eq:RH-v}, we find that
\begin{align*}
    (I)&=   \sum_{\ell=1}^{j-1} \left[ \Delta v^\nu(y^\nu_\ell)
    + u^\nu(y^\nu_j-) \, \lambda^\nu_j\right]
    \\
&=   - v^\nu(y^\nu_{1}-) + v^\nu(y^\nu_{j-1}+) + u^\nu(y^\nu_j-) \, \lambda^\nu_j
\\
&= - v^\nu(0+)  + v^\nu(y^\nu_{j}-)  \underbrace{- u^\nu(y^\nu_j-) \, 
\frac{\Delta v^\nu}{\Delta u^\nu}(y^\nu_j)}_{=(*)}
\,.
\end{align*}
Now, one has that
\begin{align*}
(*)&=  - \frac 1 {\rho^\nu(x^\nu_j-)} \, 
\frac{\Delta \vv^\nu(x^\nu_j)}{\Delta \frac1{\rho^\nu (x^\nu_j)}}\\
&=  \rho^\nu(x^\nu_j+)\, \frac{\Delta \vv^\nu(x^\nu_j)}{\Delta \rho^\nu (x^\nu_j)}\,.
\end{align*}
Therefore, 
\begin{align*}
    (I)&= - v^\nu(0+)+ \frac{v^\nu(y^\nu_{j}-)  
    \Delta \rho^\nu (x^\nu_j) + \rho^\nu(x^\nu_j+)\Delta \vv^\nu(x^\nu_j)
    }{\Delta \rho^\nu (x^\nu_j)}\;,
\end{align*}    
that leads to
\begin{align*}
\frac{d}{dt} x^\nu_j(t) &=  \frac{\vv^\nu(x^\nu_{j}+) \rho^\nu (x^\nu_j+) - \vv^\nu(x^\nu_j-)\rho^\nu(x^\nu_j-)
    }{\Delta \rho^\nu (x^\nu_j)}
+ (II)\,.
\end{align*}
Thanks to the bounds on $u^\nu$ in \eqref{uinfsup} and on the relation $L_{in}(t) = \frac 12 \tv \{\ln(u^\nu)(\cdot,t)\}$, we deduce that there exists a constant $C>0$ independent of $j$, $t$ and $\nu$ 
such that
\begin{equation}\label{eq:bound-on-speed-error}
\left| (II)\right| \le C \eta_\nu \,,
\end{equation}
and this yields~\eqref{RH-rho-tilde v} for $0<j<N^\nu+1$.

\smallskip
$\bullet$ \fbox{$j=N^\nu+1$}\quad
We proceed as above,
and deduce that 
\begin{align*}
\frac{d}{dt} b^\nu(t) &= v^\nu(M-,t) + (II)\;,
\end{align*}
where $(II)$ is the term that takes into account of the error in the speed, and satisfies the same bound as \eqref{eq:bound-on-speed-error}.
As for $j=0$, we use \eqref{def-rho-v-m-in}, \eqref{def-rho-m-out} to find that
\begin{equation*}
    v^\nu(M-,t) = \vv^\nu(b^\nu(t)-,t)
    = \frac{\mm^\nu (b^\nu(t)+)  - \mm^\nu(b^\nu(t)-) }{\rho^\nu (b^\nu(t)+ )- \rho^\nu (b^\nu(t)-)}\,.
\end{equation*}
Hence \eqref{RH-rho-tilde v} holds for $j=N^\nu(t)+1$.

Let's turn our attention to \eqref{RH-rho-tilde m}. Thanks to \eqref{eq:RH-v} and by direct calculations, one finds that
\begin{equation*}
     \frac{\Delta \left({v^\nu/u^\nu}\right)}{\Delta \left(1/u^\nu\right)} = \frac{\Delta \left({[(v^\nu)^2 + \alpha^2]/u^\nu}\right)}{\Delta \left(v^\nu/u^\nu\right)}\,,
\end{equation*}
and then
\begin{equation*}
    \frac{\Delta \left({\rho^\nu \vv^\nu}\right)}{\Delta \rho^\nu} = \frac{\Delta \left({\rho^\nu(\vv^\nu)^2 + \alpha^2 \rho^\nu}\right)}
    {\Delta \left(\rho^\nu \vv^\nu\right)}\,.
\end{equation*}
Hence \eqref{RH-rho-tilde m} follows immediately from \eqref{RH-rho-tilde v}.
The proof is complete.
\end{proof}

\smallskip

\paragraph{\textbf{\underline{Shocks and rarefactions}}}

It is well known that the ra\-re\-faction-shock curves for system~\eqref{eq:system_Lagrangian-2}, 
issued at a point $(u_\ell,v_\ell)$ with $u_\ell>0$, can be translated into the ra\-re\-faction-shock curves for 
system~\eqref{eq:system_Eulerian} issued at $(\rho_\ell=(u_\ell)^{-1},\vv_\ell=v_\ell)$. In more detail, 

\smallskip\noindent
$\bullet$\quad curve of the \textbf{first} family:
\begin{equation} 
 \begin{cases}  
   \rho>\rho_\ell, \qquad \qquad  \vv = \vv_\ell - \alpha \frac{|\rho- \rho_\ell|}{\sqrt{\rho\rho_\ell}}, \\  
   0<\rho<\rho_\ell, \qquad \vv = 
    \vv_\ell + \alpha \ln \left(\frac {\rho_\ell} {\rho}\right);
 \end{cases}
    \label{RH-1-euler}  
\end{equation} 

\medskip\noindent
$\bullet$\quad curve of the \textbf{second} family:
\begin{equation}   
 \begin{cases}
   \rho>\rho_\ell, \qquad  \qquad 
   \vv = \vv_\ell - \alpha\ln\left(\frac{\rho_\ell}{\rho}\right), 
   \\  
   0<\rho<\rho_\ell, \qquad \vv = \vv_\ell - \alpha \frac{|\rho- \rho_\ell|}{\sqrt{\rho\rho_\ell}}\,.
    \label{RH-2-euler}  
    \end{cases}
\end{equation} 
It is clear that, when passing from the  $(u^\nu,v^\nu)$ to the $(\rho^\nu,\vv^\nu)$ approximate solutions, shocks translate into shocks and rarefactions into rarefactions. 

The discontinuities at $x=a^\nu(t)$ and at $x=b^\nu(t)$ separate the half-plane $\mathbb{R}\times[0,\infty)$ into the region 
\begin{equation}\label{def:Omega-nu}
\Omega^\nu\dot =\{(x,t):\ t\ge 0\,,\ x\in I^\nu(t) \}\;,    
\end{equation}
where $\rho$ is uniformly positive and  the external region $\mathbb{R}\times[0,\infty)\setminus\Omega^\nu $ with $\rho=0$. We claim that 
$a^\nu$ and $b^\nu$ can be interpreted as a 1-shock and a 2-shock, respectively. 

Indeed, let $(\rho_\ell,\vv_\ell)$ with $\rho_\ell>0$ and consider $(\rho, \vv(\rho))$ from \eqref{RH-2-euler}$_2$. The discontinuity with
$(\rho_\ell,\vv_\ell)$  on the left, and $(\rho, \vv(\rho))$ on the right, is a 2-shock. By letting $\rho\to 0$, we find that
$$
\rho \vv(\rho) = \rho \vv_\ell - \alpha \sqrt{\rho}\frac{|\rho- \rho_\ell|}{\sqrt{\rho_\ell}} \to 0\qquad  \mbox{as }\rho\to 0\,.
$$
Hence, for any value $(\rho, \rho\vv)$ with $\rho>0$ at $x=b^\nu(t)-$, the jump to $(0,0)$ can be interpreted as a 2-shock 
with propagating speed $\vv_\ell$.
 
At $x=a^\nu(t)$ the argument is analogous, however we have to consider the inverse 1-shock curve: for a fixed state $(\rho,\rho\vv)$ 
on the right of the discontinuity, the value $(\rho_\ell,\vv_\ell)$ on the left is obtained from \eqref{RH-1-euler}$_1$:
$$
 \vv_\ell = \vv + \alpha \frac{|\rho- \rho_\ell|}{\sqrt{\rho\rho_\ell}}\,,\qquad 0<\rho_\ell<\rho\,.
$$
As $\rho_\ell\to 0$ and for $(\rho,\rho\vv)$ fixed,
$$
 \rho_\ell \vv_\ell = \rho_\ell \vv + \alpha \sqrt{\rho_\ell}\frac{|\rho- \rho_\ell|}{\sqrt{\rho}}\to 0\,.
$$
Therefore, at $x=a^\nu(t)$ the discontinuity with left state $(0,0)$ and any value $(\rho, \rho\vv)$ with $\rho>0$ as a right state, 
can be interpreted as a 1-shock with propagating speed $\vv$.

\begin{remark}
In Ref.~\cite{LS80}, the problem of the vacuum state for isentropic gas dynamics has been considered, under assumptions on $p(\rho)$ 
that include powers $\rho^\gamma$ with $\gamma>1$; in particular, it is assumed that $\sqrt{p'(\rho)}/\rho$ is integrable at $\rho=0$.
In our case this property does not hold; as a consequence, $\vv(\rho)$ diverges as $\rho\to 0$ in
\eqref{RH-1-euler}, \eqref{RH-2-euler}, while $\rho \vv(\rho)$ converges to 0. 
\end{remark}

\smallskip
\paragraph{\textbf{\underline{Weak formulation}}} We conclude this subsection by providing 
the weak formulation of the equations satisfied by our approximate solution $(\rho^\nu,\mm^\nu)$. For any test function $\phi\in C^\infty_0\left(\R\times(0,\infty)\right)$ we define
\begin{align}
    & \RR^\nu := 
    \iint_{\R\times\R_+} \left\{\rho^\nu\phi_t + \mm^\nu\phi_x \right\}\; dxdt\,,\label{sec4.3:Rnu}\\
    &
    \MM^\nu := 
    \iint_{\Omega^\nu}\left\{ \mm^\nu\phi_t
    + \left[p(\rho^\nu) + \frac{(\mm^\nu)^2}{\rho^\nu} \right]\phi_x
    -M\mm^\nu \phi \right\}\,dx dt \nonumber\\
       & \qquad \qquad  - \int_0^\infty   \left[p(\rho_b^\nu(t)) \phi(b^\nu(t),t) -p(\rho_a^\nu(t)) \phi(a^\nu(t),t) \right] \,dt\;,\label{sec4.3:Mnu}
\end{align}
where
\begin{equation}\label{def:rhonu-b-a}
\rho_b^\nu(t)=\rho^\nu(b^\nu(t)-,t),\qquad \rho_a^\nu(t)=\rho^\nu(a^\nu(t)+,t)\;.
\end{equation}
We claim that 
\begin{align}\label{S4.3limits}
    \lim_{\nu\to\infty} \RR^\nu= 0 = \lim_{\nu\to\infty} \MM^\nu\,.
\end{align}
The proof of the claim follows by standard techniques used in the front-tracking method and for this reason, we show only the analysis for the limit of $\MM^\nu$ that is interesting because it involves the second integral with the pressure terms along the boundaries, which is not usually present. As the following computations indicate, this arises in $\MM^\nu$ due to the vacuum. So, following the notation in Lemma~\ref{Sect:3.4}, we write 
$$\iint_{\Omega^\nu}\left\{ \mm^\nu\phi_t
    + \left[\frac{(\mm^\nu)^2}{\rho^\nu}+p(\rho^\nu)  \right]\phi_x
    -M\mm^\nu \phi \right\}\,dx dt=I_1^\nu+I_2^\nu\;,
    $$
where
\begin{align*}I_1^\nu:=&\int_0^T\sum_{j=1}^{N^\nu(t)} \left[(x_j^\nu)'\Delta\mm^\nu-\Delta\left( \frac{(\mm^\nu)^2}{\rho^\nu}+p(\rho^\nu)\right)
\right]\phi(x_j(t),t)\,dt\\
&+ \int_0^T\left[  (b^\nu(t))' (-\mm^\nu(b^\nu(t)-,t))+\frac{(\mm^\nu)^2}{\rho^\nu}(b^\nu(t)-,t)+p(\rho^\nu_b(t)) \right]\phi(b^\nu(t),t)\,dt
\\
&+\int_0^T\left[ (a^\nu(t))' (\mm^\nu(a^\nu(t)+,t))- \frac{(\mm^\nu)^2}{\rho^\nu}(a^\nu(t)+,t)-p(\rho^\nu_a(t)) \right]\phi(a^\nu(t),t)\,dt\;,
\end{align*}
and
\begin{align*}I_2^\nu:=&\sum_n\int_\R \left[\mm^\nu(x,t^n-)-\mm^\nu(x,t^n+)\right]\phi(x,t^n) \,dx- \iint_{\Omega^\nu} M\mm^\nu(x,t) \phi (x,t)\,dxdt
\\
\stackrel{\eqref{eq:u-v_fractional-step}}{=}& M\sum_{n=0}^\infty \int_{t^n}^{t^{n+1}} \int_{\R} \left[ \mm^\nu(x,t^n-) \phi(x,t^n)- \mm^\nu(x,t) \phi (x,t) \right]\,dxdt\;.
\end{align*}
We note that the last two terms in $I_1^\nu$ correspond to the terms $j=N^\nu(t)+1$ and $j=0$, respectively and are different from the first term because these touch the boundaries of $\Omega^\nu$. Also the first term in $I_2^\nu$ accounts to the jump of $\mm^\nu$ across time steps. From Lemma~\ref{lem:RH-cond-rhov}, we deduce
\begin{align*}
I_1^\nu=&\int _0^\infty \left[  p(\rho^\nu_b(t)) \phi(b^\nu(t),t) - p(\rho^\nu_a(t)) \phi(a^\nu(t),t)\right]\,dt\\
&+\O(1)\|\phi\|_\infty \,T\,\sup_{0<t<T}\left\{ \tv \left\{\mm^\nu(\cdot,t);\R\right\}\right\}\cdot  \eta_\nu\;,
\end{align*}
while
\begin{align*}
I_2^\nu&\le  M\|\phi\|_{\infty} \sum_{n=0}^N \int_{t^n}^{t^{n+1}}\int_{-L}^L|\mm^\nu(x,t^n-)-\mm^\nu(x,t)| dxdt \\
&+  M\|\phi_t\|_{\infty}  \,T\,\sup_{0<t<T}\left\{ \int_\R |\mm^\nu(x,t)|\,dx\right\}\cdot  \DT_\nu\;,
\end{align*}
where we use that the test function has support within $[-L,L]\times (0,T)$. Using that $t\mapsto \int_\R\mm^\nu(t)$ is Lipschitz continuous, we get immediately that $I_2^\nu$ tends to zero and
\begin{align*}
I_1^\nu-\int _0^\infty \left[  p(\rho^\nu_b(t)) \phi(b^\nu(t),t) - p(\rho^\nu_a(t)) \phi(a^\nu(t),t)\right]\,dt\rightarrow0
\end{align*}
as $\nu\to\infty$. This establishes the claim that $\MM^\nu\to 0$ as $\nu\to\infty$.

\begin{remark}
We remark that the weak formulation given above can be expressed in terms of the distribution
\begin{equation}\label{def:m-hat2}
    \widehat \mm^\nu(\cdot, t) := \mm^\nu (\cdot,t) + \delta_{b^\nu(t)} P_b^\nu(t) - \delta_{a^\nu(t)} P_a^\nu(t)\,,\quad t>0\,.
\end{equation}
where
\begin{equation}\label{def:Pnu}
    P^\nu_b(t) :=  \int_0^t e^{-M(t-s)}p(\rho_b^\nu(s))\, ds\,,\quad P^\nu_a(t) :=  \int_0^t e^{-M(t-s)} p(\rho_a^\nu(s))\, ds\,,
\end{equation}
as follows: for any test function $\phi\in C^\infty_0\left(\R\times(0,\infty)\right)$, it holds
\begin{align*}
    & \RR^\nu = 
    \iint_{\R\times\R_+} \left\{\rho^\nu\phi_t + \mm^\nu\phi_x \right\}\; dxdt\longrightarrow 0\,,\\
    &
    \MM^\nu = 
    \int_0^\infty <\widehat \mm^\nu, \phi_t(\cdot,t)-M\phi(\cdot,t)> dt + \iint_{\Omega^\nu} \left[\frac{(\mm^\nu)^2}{\rho^\nu}+p(\rho^\nu)  \right]\phi_x\;dx dt\longrightarrow 0\;,
\end{align*}
as $\nu\to \infty$. The above identity for $\MM^\nu$ can be shown by direct computations and one observes that the Dirac deltas
in~\eqref{def:m-hat2} absorb the second term in $\MM^\nu$ present in the expression~\eqref{sec4.3:Mnu}, while $\RR^\nu$ remains unchanged.
\end{remark}

\subsection{Conservation of mass and momentum}\label{Sect:3.5}

In this subsection we address the questions of conservation of mass and momentum for the approximate solutions $\rho^\nu$, $\mm^\nu$ 
as defined in \eqref{def-rho-v-m-in}, \eqref{def-rho-m-out}. While the first one follows by construction, see \eqref{eq:constant-mass-nu},
for the momentum one needs to take into account the two singular terms that appear as the result of the limit $\rho\to 0$ described in the
Subsection~\ref{Sect:3.4}.

Let's define the functional
\begin{align}\label{def:Jnu}
\II^\nu(t) & := \int_{a^\nu(t)}^{b^\nu(t)} \mm^\nu  (x,t)\,dx 
+ P^\nu_b(t)-  P^\nu_a(t)
\end{align}
using~\eqref{def:Pnu}, that is interpreted as the \textit{approximate total momentum} and observe that
\begin{align*}
Q^\nu(t):=\int_0^t e^{-M(t-\tau)}\left[ p(\rho_b^\nu(\tau)) - p(\rho_a^\nu(\tau))\right]\;d\tau\le\alpha^2 \frac{1}{M} \sup_t \tv \{\rho^\nu(\cdot,t);\R \}
\end{align*}
which implies that $|Q^\nu(t)|$ remains bounded for all times from \S~\ref{Sect:3.3}. 

In the next lemma we show that, for every fixed $t\ge 0$, $\II^\nu(t)$ approaches 0 as $\nu\to+\infty$.

\begin{lemma}\label{lem:int-of-v} 
For every $t\ge 0$, one has  
\begin{equation}\label{eq:time-estimate-total-momentum}
\left|\II^\nu(t)\right|\le e^{-M t}\cdot e^{M \Delta t_\nu}\cdot \frac {M}\nu  + \frac{\widetilde C}M (\eta_\nu +\DT_\nu) + 
\widetilde C \eta_\nu \DT_\nu +\widetilde C \, \DT_\nu
\end{equation}
for a suitable constant $\widetilde C>0$, which is independent of $t$ and $\nu$.
\end{lemma}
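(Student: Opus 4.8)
The plan is to transfer $\II^\nu(t)$ to the Lagrangian side, where it becomes the average of the velocity, and to follow that average through the front–tracking/operator–splitting construction.

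\emph{Step 1: rewriting $\II^\nu$.} By the change of variables of Lemma~\ref{S3:lemma1} (under which $dx=u^\nu\,dy$, $\rho^\nu=1/u^\nu$ and $\vv^\nu=v^\nu$) one has
$$
\II^\nu(t)=\int_{a^\nu(t)}^{b^\nu(t)}\rho^\nu(x,t)\vv^\nu(x,t)\,dx=\int_0^M v^\nu(y,t)\,dy=:m^\nu(t),
$$
so it is enough to estimate $m^\nu$. From the choice of the approximate initial data in Step~1 of Subsection~\ref{S2.3} one already has $|m^\nu(0)|=\bigl|\int_0^M v_0^\nu\bigr|\le M/\nu$.

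\emph{Step 2: evolution between time steps (the crux).} On each interval $(t^{n-1},t^n)$ the pair $(u^\nu,v^\nu)$ is the $\eta_\nu$–front tracking solution of the homogeneous system $\partial_t u-\partial_y v=0$, $\partial_t v+\partial_y(\alpha^2/u)=0$; hence $m^\nu$ is piecewise affine there, and where it is differentiable $\frac{d}{dt}m^\nu(t)=-\sum_j \dot y_j\,\Delta v^\nu(y_j,t)$, the sum running over the fronts inside $(0,M)$. Across a shock front the Rankine--Hugoniot relation gives $\dot y_j\,\Delta v^\nu(y_j)=\Delta(\alpha^2/u^\nu)(y_j)$ up to the $O(\eta_\nu)$ error in the front speed allowed by the algorithm; across a rarefaction front of size $\eps_j<\eta_\nu$ the same identity holds up to $O(\eps_j^2)$ (the discrepancy between the characteristic speed of the right state and the chord speed). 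Summing, the interior jumps of $\alpha^2/u^\nu$ telescope, the boundary traces produce no net contribution because of the non-reflecting boundary conditions at $y=0,M$ — equivalently, because in Eulerian variables the free boundaries $a^\nu(t),b^\nu(t)$ carry no momentum flux, the content of the $1$-shock/$2$-shock interpretation of Subsection~\ref{Sect:3.4} — and the remaining errors are controlled by $\sum_j|\eps_j|\le L_{in}(t)\le q$ (Lemma~\ref{lem:bounds-on-bv}) together with $\eps_j<\eta_\nu$ for rarefactions. One concludes
$$
\Bigl|\tfrac{d}{dt}m^\nu(t)\Bigr|\le \widetilde C\,\eta_\nu,\qquad t^{n-1}<t<t^n,
$$
with $\widetilde C$ depending only on $\alpha$ and $q$.

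\emph{Step 3: time steps and iteration.} By \eqref{eq:u-v_fractional-step}, $m^\nu(t^n+)=(1-M\DT_\nu)\,m^\nu(t^n-)$. Writing $b_n:=|m^\nu(t^n-)|$ and combining with Step~2 gives, for $\nu$ large enough that $0<1-M\DT_\nu<1$, the recursion $b_{n+1}\le(1-M\DT_\nu)b_n+\widetilde C\eta_\nu\DT_\nu$; iterating and using $b_0\le M/\nu$ and $\sum_{k\ge0}(1-M\DT_\nu)^k=1/(M\DT_\nu)$ yields $b_n\le(1-M\DT_\nu)^n\frac M\nu+\frac{\widetilde C}{M}\eta_\nu$. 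For $t^n\le t<t^{n+1}$ one then has $|m^\nu(t)|\le (1-M\DT_\nu)b_n+\widetilde C\eta_\nu\DT_\nu$, and bounding $(1-M\DT_\nu)^{\,n+1}\le e^{-Mt^{n+1}}\le e^{M\DT_\nu}e^{-Mt}$ gives exactly \eqref{eq:time-estimate-total-momentum}.

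The routine parts are Steps~1 and~3 and the bookkeeping of the $\eta_\nu$–errors; the one genuinely delicate point is the assertion in Step~2 that the homogeneous front–tracking flow conserves $\int_0^M v^\nu\,dy$ up to rate $O(\eta_\nu)$, i.e.\ that the traces of the flux $\alpha^2/u^\nu$ at $y=0,M$ contribute nothing. This is precisely where the non-reflecting boundary conditions — and, in Eulerian variables, the vanishing of the momentum flux through the free boundaries $a^\nu(t),b^\nu(t)$ (Subsection~\ref{Sect:3.4}) — are essential, and is the step that needs the most care.
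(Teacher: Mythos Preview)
Your Steps~1 and~3 are fine and match the paper's iteration. The real problem is Step~2: after the telescoping in Lagrangian coordinates you are left with the boundary contribution
\[
\frac{\alpha^{2}}{u^\nu}(0+,t)\;-\;\frac{\alpha^{2}}{u^\nu}(M-,t),
\]
and you assert this vanishes ``because of the non-reflecting boundary conditions.'' It does not. By Lemma~\ref{Lemmauinfsup} both traces lie in $[\alpha^{2}/u^\nu_{sup},\,\alpha^{2}/u^\nu_{inf}]$, hence are strictly positive, and nothing in the construction forces them to be equal; the non-reflecting conditions \eqref{eq:bc-lagrangian} merely absorb outgoing fronts and impose no relation whatsoever between $u^\nu(0+,t)$ and $u^\nu(M-,t)$. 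So as written your derivative estimate $|\tfrac{d}{dt}m^\nu|\le\widetilde C\eta_\nu$ is unjustified: the right-hand side you actually obtain is of order $1$, not $\eta_\nu$.

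The paper circumvents exactly this difficulty by staying in Eulerian coordinates. There the telescoping sum runs over \emph{all} discontinuities of $(\rho^\nu,\rho^\nu\vv^\nu)$ on $\R$, including the free boundaries $x_0^\nu=a^\nu(t)$ and $x_{N^\nu+1}^\nu=b^\nu(t)$ themselves (see Lemma~\ref{lem:RH-cond-rhov}); the momentum flux $\rho^\nu(\vv^\nu)^2+\alpha^{2}\rho^\nu$ vanishes identically on the vacuum side of those two discontinuities because $\rho^\nu=0$ there, so the full telescoping sum collapses. Your parenthetical remark that ``the free boundaries carry no momentum flux'' is the right physical intuition, but it is \emph{not} equivalent to your Lagrangian claim: the Eulerian cancellation comes from the vacuum region outside $[a^\nu,b^\nu]$, and precisely this region is collapsed to the two points $y=0$, $y=M$ under the Lagrangian change of variables, where the pressure $\alpha^2/u^\nu$ no longer vanishes. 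To make your approach go through you would have to reinstate the free-boundary discontinuities with their Eulerian speeds and fluxes, which amounts to redoing the paper's Step~1 in Eulerian form.
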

\begin{proof} 

\textbf{Step 1.}\quad Let $t\in(t^n,t^{n+1}$), with $t$ not being a time of interaction. As in  Lemma~\ref{lem:RH-cond-rhov}, let 
\begin{equation*}
\mu^\nu_j = \frac{\Delta \mm^\nu}{\Delta \rho^\nu}\left(x^\nu_j(t)\right) \,,\qquad j=0,\ldots,N^\nu(t)+1    
\end{equation*}
be the exact speed given by the Rankine-Hugoniot conditions, related to the pair of states in the $(\rho,\mm)$ variables that are 
on the left and right side of the discontinuity at $x_j(t)$. In this notation, we recall that
\begin{equation}\label{def:exact-speed-RH-ab}
    \mu^\nu_0 = \vv^\nu(a^\nu(t)+,t) \,,\qquad \mu^\nu_{N^\nu(t)+1} = \vv^\nu(b^\nu(t)-,t)\,.
\end{equation}

One has that
\begin{align*}
   \frac{d}{dt} 
   \int_{a^\nu(t)}^{b^\nu(t)} \mm^\nu (x,t)\,dx
   & = - 
   \sum_{j=0}^{N^\nu(t)+1} \Delta \mm^\nu(x^\nu_j)\, (x_j^\nu)' = (I) + (II)\;,
\end{align*}
where
\begin{align*}
    (I)&= - 
   \sum_{j=0}^{N^\nu(t)+1} \Delta\mm^\nu
   (x^\nu_j) \mu^\nu_j \;,
\end{align*}
and 
\begin{align*}
    (II)&= - \sum_{j=0}^{N^\nu(t)+1} \Delta \mm^\nu (x^\nu_j) \left[(x_j^\nu)'- \mu^\nu_j \right]\;.
\end{align*}
About $(II)$, we use \eqref{RH-rho-tilde v} to find that
\begin{align*}
    |(II)|&\le C\,\eta_\nu \, \sup_{t>0} \tv\{\mm^\nu (\cdot,t);\R\}\\
    & = \widetilde C\, \eta_\nu \;,
    \end{align*}
for some $\widetilde C>0$, which is independent on $t$ and $\nu$. 

About $(I)$, the Rankine-Hugoniot condition for \eqref{eq:system_Eulerian_K=1}$_2$ gives
\begin{equation*}
    \Delta\mm^\nu(x^\nu_j) \mu^\nu_j = \Delta\left(\mm^\nu \vv^\nu + \alpha^2 \rho^\nu \right)\,,\quad  j=1,\ldots, N^\nu(t)\,.
\end{equation*}
On the other hand, for $j=0$ and $j=N^\nu(t) +1$ one has 
\begin{align*}
    \Delta\mm^\nu (a^\nu(t),t) \mu^\nu_0 &= (\mm^\nu \vv^\nu) (a^\nu(t)+,t)\,,\\[1mm]
    \Delta\mm^\nu (b^\nu(t),t) \mu^\nu_{N^\nu(t) +1} &= - (\mm^\nu \vv^\nu) (b^\nu(t)-,t)\,,
\end{align*}
where we used the expression for the propagation speed in \eqref{def:exact-speed-RH-ab}\,.

Therefore, $(I)$ rewrites as a telescopic sum:
\begin{align*}
    (I)&= - \sum_{j=1}^{N^\nu(t)} \Delta\left(
    \mm^\nu \vv^\nu\right)(x^\nu_j) -    (\mm^\nu \vv^\nu) (a^\nu(t)+,t) +  (\mm^\nu \vv^\nu) (b^\nu(t)-,t)\\
   &= -  \alpha^2 \rho^\nu (b^\nu(t)-)  + \alpha^2 \rho^\nu (a^\nu(t)+) \,.
\end{align*}

Thus, we have
\begin{align*}
\frac{d}{dt} \II^\nu(t) & = (I) + (II) +
 \frac{d}{dt} Q^\nu(t)\\
 &=(I) + (II) +
p(\rho_b^\nu(t)) - p(\rho_a^\nu(t))
 - M Q^\nu(t)\\
    & = (II) - M Q^\nu(t) \le \widetilde C\, \eta_\nu - M Q^\nu(t)
\end{align*}
and hence for $t'<t''$, with $t',\ t''\in (t^{n-1},t^n)$, we deduce
\begin{equation}\label{eq:bound-int-rho-v}
     \II^\nu(t'') - \II^\nu(t')
    \le \widetilde C\, \eta_\nu\, (t'' - t') -M\int_{t'}^{t''} Q^\nu(t)\,dt 
    \qquad  t',\ t''\in (t^{n-1},t^n)\quad \forall\, n\ge 1\,.
\end{equation}

\smallskip
\textbf{Step 2.}\quad 
Now, let's denote
$$a_n = |\II^\nu(t^n+)|\qquad n\ge 0\,.$$
As a consequence of \eqref{eq:u-v_fractional-step}, one has that 
\begin{align*}
\II^\nu(t^n+)=(1-M\DT_\nu) \int_{a^\nu(t^n)}^{b^\nu(t^n)} \mm^\nu  (x,t^n-)\,dx + Q^\nu(t^n)
\end{align*}
since at time steps, the map $t\mapsto Q^\nu(t)$ is Lipschitz continuous. From Lemma~\ref{S3:lemma1}, this is also true for the approximate free boundaries $a^\nu$ and $b^\nu$.
 Hence, using~\eqref{eq:bound-int-rho-v}, we reach
\begin{align*}
&\II^\nu(t^n+)=(1-M\DT_\nu)[\II^\nu(t^n-) - Q^\nu(t^n)] + Q^\nu(t^n)\\
&~ \le (1-M\DT_\nu) \left[\II^\nu(t^{n-1}+)+ \widetilde C \eta_\nu\DT_\nu\right] - M \underbrace{\int_{t^{n-1}}^{t^n} \left[(1-M\DT_\nu) Q^\nu(t) - Q^\nu(t^n)\right]\,dt}_{(*)}.
\end{align*}
Now we compute the derivative of $Q^\nu$, which is 
\begin{equation*}
    (Q^\nu)'(t) = -M Q^\nu(t) + \left[ p(\rho_b^\nu(t)) - p(\rho_a^\nu(t))\right]\,.
\end{equation*}
Recalling that there exists $\widehat C$ such that $|Q^\nu(t)|\le \widehat C$ for all $t$ and that $\rho^\nu$ is globally bounded, we deduce that $Q^\nu$ is Lipschitz continuous over $[0,+\infty)$ and let $\widehat C_1$ be its Lipschitz constant. Therefore
\begin{equation*}
    |(*)|\le M (\DT_\nu)^2 \widehat C + \widehat C_1 (\DT_\nu)^2 \,.
\end{equation*}
Thus
\begin{align*}
    a_n 
      &\le \left(1-M\DT_\nu\right) a_{n-1} +  \widetilde C\, ( \eta_\nu +\DT_\nu)\, \DT_\nu\,,
\end{align*}
where we assumed that $\DT_\nu>0$ and it satisfies $1-M\DT_\nu>0$ and, for convenience, we let 
$M^2 \widehat C + M\widehat C_1 \le\widetilde C$.
Applying the iteration formula with $0<r<1$
\begin{equation*}
    a_n\le r a_{n-1} + b \quad\Rightarrow\quad a_n \le r^n a_0 + b \frac{1-r^n} {1-r} \le  r^n a_0 +  \frac{b} {1-r} \,,
\end{equation*}
we find that
\begin{equation*}
   a_n \le \left(1-M\DT_\nu\right)^n a_0 +  \frac{ \widetilde C} {M} \, (\eta_\nu +\DT_\nu)
   \le e^{-Mn\DT_\nu } a_0 +  \frac{ \widetilde C} {M} \, ( \eta_\nu +\DT_\nu)
    \,,\qquad n\ge 1\,.
\end{equation*}
Recalling Step 1 in Subsection~\ref{S2.3}, we find that $a_0\le M/\nu$ and therefore \eqref{eq:time-estimate-total-momentum} holds for $t=t^n+$.

\smallskip
\textbf{Step 3.}\quad Finally, let $t\in (t_n,t_{n+1})$. By using \eqref{eq:bound-int-rho-v}, we find that
\begin{align*}
\II^\nu(t)-\II^\nu(t^{n}+)&\le  \widetilde C\, \eta_\nu \,(t-t^n)- M\int_{t^n}^{t} Q^\nu(t)\,dt\\
&\le\widetilde C\, \eta_\nu \,\DT_\nu+ \widetilde C \DT_\nu
\end{align*}
since $Q^\nu(t)$ is uniformly bounded. Hence, using Step 2, we find that \eqref{eq:time-estimate-total-momentum} 
holds for every $t\in (t_n,t_{n+1})$, $n\ge 1$.
\end{proof}

\subsection{Conclusion: Proof of Theorem~\ref{Th-1}}\label{Sect:3.6}
Let $(u,v)(y,t)$ be the entropy weak solution of \eqref{eq:system_Lagrangian}-\eqref{eq:init-data-lagr}-\eqref{eq:bc-lagrangian}, 
established in Theorem~\ref{Th-1-lagr}, and let $(u^{\nu}, v^{\nu})$ be a subsequence which converges, as $\nu\to\infty$, to 
$(u,v)$ in $L^1_{loc}\left((0,M)\times[0,+\infty) \right)$. By Lemma~\ref{lem:conv-a-b}, the corresponding subsequence 
$(\rho^{\nu}, \mm^{\nu})$ defined on $\R\times [0,+\infty)$ converges, as $\nu\to\infty$, to a function $(\rho,\mm)$ in
$L^1_{loc}\left(\R\times [0,+\infty)\right)$. Having defined
$$
\Omega=\{ (x,t);\ t\ge 0\,,\  x\in (a(t),b(t))\}\subset \R\times[0,+\infty)\,,
$$
we observe that $\rho^\nu$, $\mm^\nu$ converge to 0 pointwise outside $\overline \Omega$ and hence $\rho(x,t)=\mm(x,t) =0$ on 
$\overline \Omega^c$\,. Let's set $ \widehat \mm(\cdot, t)$ according to~\eqref{def:m-hat} and $ P_b(t)$ and $ P_a(t)$ as given
in~\eqref{def:Pnu-intro}. From \eqref{eq:constant-mass-nu} and the convergence
$$
\int_{a^\nu(t)}^{b^\nu(t)} \rho^\nu(x,t) \,dx \to \int_{a(t)}^{b(t)} \rho(x,t) \,dx\qquad \nu\to\infty\,,
$$
we deduce that the total mass is conserved $\forall\, t$, i.e. \eqref{cons-of-mass} holds true.

Next, recall that we set $M_1=0$ (at the beginning of Sect.~\ref{Sect:2}). Therefore we need to prove that the total momentum 
is constantly equal to 0. First, we observe that 
\begin{equation*}
    P^\nu_b(t)\to P_b(t)\,,\qquad P^\nu_a(t)\to P_a(t)\;, \qquad \nu\to\infty \,
\end{equation*}
because of Theorem~\ref{Th-1-lagr}, b). From Lemma~\ref{lem:conv-a-b}, we obtain that
\begin{equation*}
\II^\nu(t)\longrightarrow
 \int_{a(t)}^{b(t)} \mm(x,t)\,dx +  P_b(t) - P_a(t)=<\widehat\mm(\cdot,t),\phi_1 >\,, \qquad \nu\to\infty \,,
\end{equation*}
with $\phi_1=\phi_1(x)$ a test function being equal to $1$ on $I(t)$.
On the other hand, by \eqref{eq:time-estimate-total-momentum}, one has that $\II^\nu(t) \to 0$ for every $t\ge0$. 
Thus the total momentum is conserved as well $\forall\, t$, i.e. \eqref{cons-of-momentum} is established. 

Thanks to the properties of $(\rho^{\nu}, \mm^{\nu})$ and~\eqref{S4.3limits}, one can show that $(\rho,\mm)$ is an entropy weak solution
with concentration of the Cauchy problem to~\eqref{eq:system_Eulerian_K=1} in the sense of Definition~\ref{entropy-sol}
satisfying~\eqref{S1:rho-eq-phi2} and \eqref{S1:m-eq-phi} and in particular, the structure \eqref{solution-structure} holds. 
Observe that due to conservation of mass and momentum, the integral identity \eqref{S1:m-eq-phi} yields \eqref{S1:m-eq-phi2}.

Moreover, \eqref{entropy-cond_rho-m} reads as
\begin{equation}\label{entropy-cond_rho-m_M1=0} 
       \partial_t \eta(\rho,\mm) + \partial_x q(\rho,\mm)\le - \eta_{\mm} M \mm
\end{equation}
due to conservation of mass and momentum, and the fact that $M_1=0$. Thanks to the analogous property in the $(u,v)$ variables, 
stated in Theorem~\ref{newthm}, we conclude that \eqref{entropy-cond_rho-m_M1=0} holds in the sense of distributions in the interior 
of $\Omega$, $\{ (x,t);\ t> 0\,,\  x\in (a(t),b(t))\}$.


\Section{Time-asymptotic flocking}\label{Sect:4}  

In this section, we prove Theorem~\ref{Th-2}. The aim is to introduce new supporting functionals assembled using the linear functionals of Section~\ref{subsect:Lxi}
that better capture the structure of the solution, especially as time goes to infinity.

\subsection{Decay estimates for the undamped system}\label{S4.1}
We give the definition of the generation order $g\ge 1$ of a wave present in the approximate solution contructed in Section~\ref{Sect:2}.
We assign inductively to each wave $\alpha$ a generation order $g_\alpha\ge 1$ as in Refs.~\cite{AC_SIMA_2008,ABCD_JEE_2015} according to the following procedure: First, at time $t=0$ each wave has order equal to $1$. Then, assume that two waves $\alpha$ and $\beta$ interact at time $t$ having generation order $g_\alpha$ and $g_\beta$, respectively. If $\alpha$ and $\beta$ belong to different families, then the outgoing waves of those families inherit the same order of the corresponding incoming waves. On the other hand, if $\alpha$ and $\beta$ belong to the same family, then the outgoing wave of that family takes the order $\min\{g_\alpha,\,g_\beta\}$, while the other new outgoing wave is assigned the order $\max\{g_\alpha,\,g_\beta\}+1$.

Next, we define the weighted total variation functional $F_k(t)$ of generation $g=k$ to be
\begin{equation}
F_k(t) \dot = \sum_{\eps>0, \ g_\eps=k} |\eps| + \xi \sum_{\eps<0, \ g_\eps=k} |\eps|\,,\qquad k\ge 1\;,
\end{equation}
while the weighted total variation functional $\widetilde F_k(t)$ of generation $\ge k$ is
\begin{equation}
\widetilde F_k(t) \ \dot =\  \sum_{j\ge k} F_j(t) = \sum_{\eps>0, \ g_\eps\ge k} |\eps| + \xi \sum_{\eps<0, \ g_\eps\ge k} |\eps|\,,\qquad k\ge 1\;.
\end{equation}
In the following analysis throughout this section, the weight $\xi$ of the part of shocks in $F_k$ and $\tilde F_k$ is greater or equal to $ 1$. The aim is to estimate the change of these functionals in time for some values of $\xi\ge 1$ depending on the initial bulk $q$ and the generation order. This will allow us to capture the behavior of the solution at $t=\infty$.

Now, for $k\in\N$, we define

\begin{itemize}
\item[(i)] $I_{k,\ell}(t)$: the set of times $\tau<t$, different from the time steps, at which an interaction between two waves of the same family,
one of order $k$ and the other of order $\ell$, with $k\ge \ell\ge 1$
\item[(ii)] ${\II}_k(t)$: the set of times $\tau<t$, again different from the time steps, when two waves of the same family interact, 
with maximum generation order $=k$, that is
\begin{equation*}
\II_k =\cup_{\ell=1}^k I_{k,\ell} = \left(I_{k,k}\right) \cup  \left(\cup_{\ell=1}^{k-1} I_{k,\ell}\right) \,.
\end{equation*}
\end{itemize}
We recall a result from Refs.~\cite{AC_SIMA_2008,ABCD_JEE_2015} for the case of $M=0$, that is, no source term in \eqref{eq:system_Lagrangian}.
The following proposition is stated, in a slight different context, in Prop. 6.5, p. 155 of Ref.~\cite{AC_SIMA_2008}, with an improvement 
on the range of the parameter $\xi$ that follows from Prop. 5.8 in Ref.~\cite{ABCD_JEE_2015} (restated in the current paper in
Lemma~\ref{lem:Delta-L-xi}).

\begin{lemma} \label{prop:F-k}
For $q>0$, assume that $L(0+)\le q$ and that
\begin{equation}\label{eq:stronger-hyp-on-xi}
1\le \xi \le 1/{\sqrt{c(q)}}\,.
\end{equation}
Then the followings hold true, 
\begin{equation}\label{eq:I_kk}
\xi [\Delta F_{k+1}]_+ \le [\Delta F_k]_-\,, \qquad  \tau\in I_{k,k}, \quad k\ge 1
\end{equation}
and 
\begin{equation}\label{eq:I_kl}
\xi [\Delta F_{k+1}]_+ \le  \Bigl([\Delta F_k]_-   -  [\Delta F_\ell]_+ \Bigr)\,,\qquad   \tau\in I_{k,\ell}\,,\quad  1\le \ell<k\,.
\end{equation}
\end{lemma}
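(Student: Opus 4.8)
The plan is to analyze a single interaction time $\tau$ between two waves of the same family, say with sizes $\alpha$ (generation $k$) and $\beta$ (generation $\ell$), $k\ge\ell\ge 1$, producing a transmitted wave of generation $\min\{k,\ell\}=\ell$ and a reflected wave of generation $\max\{k,\ell\}+1=k+1$. (Interactions of waves from different families do not change any $F_j$, since the waves pass through unchanged with their generation orders preserved, so we may restrict attention to $\tau\in I_{k,\ell}$.) The key quantitative input is Lemma~\ref{lem:shock-riflesso}, which gives $|\eps_{refl}|\le c(q)\min\{|\alpha|,|\beta|\}$ when the incoming waves have opposite sign, and which tells us that when both incoming waves are shocks the reflected wave is a rarefaction with $\max\{|\alpha|,|\beta|\}<|\eps_{transm}|<|\alpha|+|\beta|$. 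We also use the refined bound \eqref{eq:refined-decay-Lxi} from Lemma~\ref{lem:Delta-L-xi}, namely $\Delta L_\xi(\tau)+(\xi-1)|\eps_{refl}|\le 0$, valid whenever $1\le\xi\le 1/c(q)$; since \eqref{eq:stronger-hyp-on-xi} requires $1\le\xi\le 1/\sqrt{c(q)}\le 1/c(q)$ (because $c(q)<1$), this applies here.

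First I would treat the case $k=\ell$ to establish \eqref{eq:I_kk}. Here both incoming waves have generation $k$, the transmitted outgoing wave has generation $k$, and the reflected wave has generation $k+1$. Thus $[\Delta F_{k+1}]_+ = \xi^{\chi}|\eps_{refl}|$ where $\chi\in\{0,1\}$ according to whether the reflected wave is a rarefaction or a shock; in either case $[\Delta F_{k+1}]_+\le \xi|\eps_{refl}|$, so $\xi[\Delta F_{k+1}]_+ \le \xi^2|\eps_{refl}|\le \xi^2 c(q)|\alpha_i|$ for the smaller incoming wave $\alpha_i$. On the other hand $F_k$ loses the two incoming waves (total weighted mass at least, using $|\eps|\le\xi|\eps|$ in the worst direction, $\ge |\alpha|+|\beta|$ after removing $\xi$-factors carefully — here one must keep track of signs, but in all cases $[\Delta F_k]_-\ge \min\{|\alpha|,|\beta|\}$ because at worst one incoming wave is cancelled against a transmitted wave of the same generation while the other contributes its full size to the decrease). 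Combining, $\xi[\Delta F_{k+1}]_+\le \xi^2 c(q)\min\{|\alpha|,|\beta|\}\le \min\{|\alpha|,|\beta|\}\le [\Delta F_k]_-$, using $\xi^2 c(q)\le 1$ from \eqref{eq:stronger-hyp-on-xi}. This is where the stronger hypothesis $\xi\le 1/\sqrt{c(q)}$ (rather than $\xi\le 1/c(q)$) is essential: we pay two factors of $\xi$, one from weighting $F_{k+1}$ and one from the reflection estimate.

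For \eqref{eq:I_kl} with $\ell<k$, the transmitted wave of the outgoing pair has generation $\ell$ (not $k$), so it must be accounted in $F_\ell$, not $F_k$; this is precisely why the correction term $-[\Delta F_\ell]_+$ appears on the right. I would argue: $F_k$ loses the incoming wave of generation $k$, so $[\Delta F_k]_-\ge$ (weighted size of that wave) $\ge |\alpha|$ if $\alpha$ is the generation-$k$ wave (here $\alpha$ is not necessarily the smaller one — both orderings must be checked); while $F_\ell$ can only increase through the transmitted wave, so $[\Delta F_\ell]_+\le \xi|\eps_{transm}|$, and by Lemma~\ref{lem:shock-riflesso} $|\eps_{transm}|\le|\alpha|+|\beta|$. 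The quantity $[\Delta F_k]_- - [\Delta F_\ell]_+$ should then be bounded below by roughly $\min\{|\alpha|,|\beta|\}$ after using the sign rules and the relation $|\eps_{transm}|\ge\max\{|\alpha|,|\beta|\}$ to control the cancellation; then the same chain $\xi[\Delta F_{k+1}]_+\le\xi^2 c(q)\min\{|\alpha|,|\beta|\}\le\min\{|\alpha|,|\beta|\}$ closes the estimate. The main obstacle I anticipate is the careful bookkeeping of weights and signs in assembling $[\Delta F_k]_-$ and $[\Delta F_\ell]_+$: one has to split into the subcases of Lemma~\ref{lem:shock-riflesso} (both incoming shocks versus opposite signs), track whether the reflected and transmitted waves are shocks or rarefactions (which changes the $\xi$-weights), and verify that in every subcase the net decrease in $F_k$ minus the net increase in $F_\ell$ dominates $\xi^2 c(q)\min\{|\alpha|,|\beta|\}\ge\xi[\Delta F_{k+1}]_+$. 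Once the subcase analysis is organized, each individual inequality is an elementary consequence of the estimates already proved for the homogeneous Riemann solver and the bound $\xi^2 c(q)\le 1$.
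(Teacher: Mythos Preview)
The paper does not give its own proof here; it cites \cite{AC_SIMA_2008,ABCD_JEE_2015}. Your plan is essentially correct for the \emph{opposite-sign} subcase of Lemma~\ref{lem:shock-riflesso}, and the need for $\xi^2 c(q)\le 1$ (hence \eqref{eq:stronger-hyp-on-xi}) arises exactly in the $I_{k,\ell}$, $\ell<k$, configuration where the generation-$k$ incoming wave is the rarefaction: there $[\Delta F_k]_-=|\alpha|$ carries weight $1$, while $\xi[\Delta F_{k+1}]_+=\xi^2|\eps_{refl}|\le\xi^2 c(q)|\alpha|$, and $[\Delta F_\ell]_+=0$ because both the shock and rarefaction amounts of family $i$ decrease.

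The genuine gap is in the shock--shock subcase ($SS\to RS$). Two of your claims fail simultaneously there. First, Lemma~\ref{lem:shock-riflesso}(a) gives \emph{no} $c(q)$ bound on the reflected rarefaction, so the step $|\eps_{refl}|\le c(q)\min\{|\alpha|,|\beta|\}$ is unavailable. Second, your lower bound $[\Delta F_k]_-\ge\min\{|\alpha|,|\beta|\}$ is false: for $\tau\in I_{k,k}$ with both incoming waves shocks one has $\Delta F_k=\xi(|\eps_{transm}|-|\alpha|-|\beta|)$, and since $|\eps_{transm}|+|\eps_{refl}|=|\alpha|+|\beta|$ (from \eqref{tre-uno} with all four signs negative except $\eps_1>0$) this gives $[\Delta F_k]_-=\xi|\eps_{refl}|$, which can be arbitrarily small compared to $\min\{|\alpha|,|\beta|\}$. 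The remedy is that the $SS$ subcase must be argued separately and is in fact trivial: the reflected wave is a rarefaction, so $[\Delta F_{k+1}]_+=|\eps_{refl}|$ with weight $1$, whence $\xi[\Delta F_{k+1}]_+=\xi|\eps_{refl}|=[\Delta F_k]_-$ with equality (and likewise $[\Delta F_k]_- - [\Delta F_\ell]_+=\xi|\eps_{refl}|$ in the $\ell<k$ case). So the proof must split $SS$ from $SR/RS$ exactly as in the appendix proof of Lemma~\ref{lem:Delta-L-xi}; only the latter subcase uses $\xi^2 c(q)\le 1$.
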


\subsection{Decay estimates for the damped system}\label{S4.2}
In this subsection, we analyze the variation of the functionals $L$, $L_\xi$, $\widetilde F_k$ for the approximate solutions 
defined in Subsection~\ref{S2.3} for  system~\eqref{eq:system_Lagrangian}, by using as an intermediate step a new functional weighted by the generation order of the fronts. 

\smallskip\par\noindent
Now we define
\begin{equation}\label{Vweightedgen}
V(t)=\sum_{k\ge 1} \xi^k F_k(t)
\end{equation}
with $\xi\in[1,{c(q)}^{-1/2}]$ and call $V$ \emph{the total variation that is weighted by generation order}. 
In the next lemma, we estimate the variation of $V$ for the approximate solution $(u,v)$, i.e. for a fixed time step $\DT>0$. In this subsection, we skip the index $\nu$ again for the approximate solution $(u,v)$.

\begin{lemma}
Let $(u,v)$ be the approximate solution to~\eqref{eq:system_Lagrangian} with a time step $\Delta t>0$. Then for each $n$, it holds
\begin{align}\label{DeltaV-lemma}
V(t)\le \left(1+\frac{(\xi^2-1)}{2} M\Delta t\right)^n V(0+)\;,
\end{align}
while $t\in[t^n,t^{n+1})$ and $\xi\in[1,{c(q)}^{-1/2}]$.
\end{lemma}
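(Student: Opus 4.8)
The plan is to establish two facts about the fixed approximate solution $(u,v)$ with time step $\DT$ and then iterate them: \emph{(I)} on each open interval $(t^{n-1},t^n)$ between consecutive time steps, $t\mapsto V(t)$ is non-increasing; and \emph{(II)} across each time step, $V(t^n+)\le\bigl(1+\tfrac{\xi^2-1}{2}M\DT\bigr)\,V(t^n-)$. Granting these, one has $V(t^n-)\le V(t^{n-1}+)$ by \emph{(I)}, so by \emph{(II)} and induction on $n$, $V(t^n+)\le\bigl(1+\tfrac{\xi^2-1}{2}M\DT\bigr)^nV(0+)$; applying \emph{(I)} once more on $(t^n,t^{n+1})$ gives $V(t)\le V(t^n+)$ there, and \eqref{DeltaV-lemma} follows for every $t\in[t^n,t^{n+1})$. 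Throughout I would use that $L(0+)\le q$ by Lemma~\ref{lem:bounds-on-bv}, so every wave present has size $\le q$, and that the hypothesis $\xi\le c(q)^{-1/2}$ is precisely what is needed to apply Lemma~\ref{prop:F-k}.

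For \emph{(I)}, I would observe that $V$ is piecewise constant in $t$ and can change only at a wave interaction or when a front reaches the boundary $y=0$ or $y=M$; at a boundary hit a nonnegative term leaves $V$, so $\Delta V\le0$, and at a crossing of fronts of different families strengths and generation orders are unchanged, so $\Delta V=0$. The only substantive case is an interaction $\tau\in I_{k,\ell}$ of two fronts of the same family with generation orders $k\ge\ell\ge1$: by the generation-order rules the transmitted front has order $\ell=\min\{k,\ell\}$ and the reflected front of the other family has order $k+1$, so only $F_\ell,F_k,F_{k+1}$ move, with $\Delta F_{k+1}=[\Delta F_{k+1}]_+\ge0$ and $\Delta F_k=-[\Delta F_k]_-\le0$ (for $\ell=k$ this is Lemma~\ref{lem:shock-riflesso}; for $\ell<k$ no outgoing front has order $k$). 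When $\ell=k$, \eqref{eq:I_kk} gives $\Delta V=\xi^k\bigl(-[\Delta F_k]_-+\xi[\Delta F_{k+1}]_+\bigr)\le0$; when $\ell<k$, \eqref{eq:I_kl} together with $\Delta F_\ell\le[\Delta F_\ell]_+$ and $\xi\ge1$ gives $\Delta V=\xi^\ell\Delta F_\ell-\xi^k[\Delta F_k]_-+\xi^{k+1}[\Delta F_{k+1}]_+\le(\xi^\ell-\xi^k)[\Delta F_\ell]_+\le0$. This proves \emph{(I)}.

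For \emph{(II)}, I would fix $n\ge1$ and consider a front $\eps^-$ present just before $t^n$, say a $2$-front of generation $k$ (the $1$-front case being symmetric). By the fractional step \eqref{eq:u-v_fractional-step} and Proposition~\ref{S2:prop:estimate-time-step}, it splits into a transmitted $2$-front $\eps_2^+$ of the same sign --- hence the same shock/rarefaction type, still of generation $k$ --- and a reflected $1$-front $\eps_1^+$ of the opposite sign and generation $k+1$; from $\eps_2^+-\eps_1^+=\eps^-$ and the sign rule one gets $|\eps^-|-|\eps_2^+|=|\eps_1^+|$. If $\eps^-$ is a shock, its contribution to $\Delta V(t^n)$ is $\xi^k\cdot\xi\bigl(|\eps_2^+|-|\eps^-|\bigr)+\xi^{k+1}|\eps_1^+|=0$; if $\eps^-$ is a rarefaction, the contribution is $\xi^k\bigl(|\eps_2^+|-|\eps^-|\bigr)+\xi^{k+1}\cdot\xi|\eps_1^+|=(\xi^2-1)\xi^k|\eps_1^+|$, and since $\eps^->0$, Proposition~\ref{prop:estimate-time-step} gives $|\eps_1^+|\le\tfrac12 M\DT|\eps^-|$, so this is at most $\tfrac{\xi^2-1}{2}M\DT\cdot\xi^k|\eps^-|$, i.e.\ $\tfrac{\xi^2-1}{2}M\DT$ times the weight of $\eps^-$ inside $V(t^n-)$. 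Summing over all fronts present at $t^n-$ and dropping the nonpositive shock contributions yields $\Delta V(t^n)\le\tfrac{\xi^2-1}{2}M\DT\,V(t^n-)$, i.e.\ \emph{(II)}.

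The main obstacle will be step \emph{(II)}: one must track, for each front, both the generation-order weight $\xi^k$ and the additional factor $\xi$ carried by shocks, and it is the exact cancellation in the shock case together with the appearance of $\xi^2-1$ (rather than $\xi-1$) in the rarefaction case --- via the sharp reflection bound $|\eps_1^+|\le\tfrac12 M\DT|\eps^-|$ of Proposition~\ref{prop:estimate-time-step} --- that pins down the constant $1+\tfrac{\xi^2-1}{2}M\DT$. In step \emph{(I)} the delicate point is the correct bookkeeping of generation orders across same-family interactions and the invocation of the refined decay estimates \eqref{eq:I_kk}--\eqref{eq:I_kl} of Lemma~\ref{prop:F-k}.
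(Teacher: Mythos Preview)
Your proposal is correct and follows essentially the same approach as the paper: both establish that $V$ is non-increasing between time steps via Lemma~\ref{prop:F-k} (treating the cases $\tau\in I_{k,k}$ and $\tau\in I_{k,\ell}$, $\ell<k$, separately) and that $\Delta V(t^n)\le\tfrac{\xi^2-1}{2}M\DT\,V(t^n-)$ at each time step, then iterate. The only cosmetic difference is that at the time step the paper groups the contributions by generation order via auxiliary sums $A_k$ and $B_k$ and rearranges $\sum_k\xi^k(A_k+B_k)=\sum_k\xi^k(A_k+\xi B_{k+1})$, whereas you compute the contribution front-by-front; the underlying cancellation (shock contributions vanish exactly, rarefaction contributions produce the factor $\xi^2-1$) and the use of the bound $|\eps_1^+|\le\tfrac12 M\DT|\eps^-|$ from Proposition~\ref{prop:estimate-time-step} are identical.
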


\begin{proof}
We estimate the change of $V$ first at interaction times and then at time steps. 

First, let $\tau\in I_{k,k}$ be an interaction time when two of the same family and same generation order interact. Then, from~\eqref{eq:I_kk}, we have
\begin{align*}
\Delta V(\tau)&=\xi^{k+1}\Delta F_{k+1}+\xi^k\Delta F_k\\
&=\xi^{k+1}[\Delta F_{k+1}]_{+}-\xi^k[\Delta F_k]_{-}\\
&= \xi^k \left(  \xi [\Delta F_{k+1}]_{+}-[\Delta F_k]_{-} \right) 
\le 0. 
\end{align*}
Also, at an interaction time $\tau\in I_{k,\ell}$, with $1\le \ell< k$, we apply~\eqref{eq:I_kl} to estimate the change in $V$ as follows:
\begin{align*}
\Delta V(\tau)&=\xi^{k+1}\Delta F_{k+1}+\xi^k\Delta F_k+\xi^\ell\Delta F_\ell\nonumber\\
&=\xi^{k+1}[\Delta F_{k+1}]_{+}-\xi^k[\Delta F_k]_{-}+\xi^\ell\Delta F_\ell\nonumber\\
&= \xi^k \left(\xi [\Delta F_{k+1}]_{+}- [\Delta F_k]_{-}  \right) +\xi^\ell\Delta F_\ell\\
&\le - \xi^{k}[\Delta F_\ell]_{+} +\xi^\ell\Delta F_\ell \\
&\le - \xi^{k}[\Delta F_\ell]_{+} +\xi^\ell [\Delta F_\ell]_{+}\,.\nonumber
\end{align*}
Therefore, since $\xi\ge 1$ and $k>\ell$, we are able to conclude that
\begin{align}
\Delta V(\tau) 
&\le[\Delta F_\ell]_{+}\, (\xi^\ell-\xi^k)\le 0\;. \label{DeltaV-1}
\end{align}
At interaction times of waves of different families, it holds $\Delta F_k=0$ for all $k$ and this yields $\Delta V(\tau)=0 $. 
It remains to check the variation of $V$ at the time steps $t^n$. Let $t^n$ be a time step and a front of strength $\eps^-$ and generation $g^{-}=g_{\eps^-}$
that is present at time $t=t^n-$ is updated at the time step $t^n$ according to~\eqref{eq:u-v_fractional-step} and produces a front of strength $\eps^+$  
and generation $g^{-}$ again and a reflected one of strength $\eps_{refl}$ and generation $g^{-}+1$. Then
\begin{align}\label{DeltaV-3}
\Delta V(t^n)&=\sum_{k\ge 1} \xi^k\Delta F_k(t)=\sum_{k\ge 1}  \xi^k\left [A_k+B_k\right]
\end{align}
where $A_k$ is the variation of strengths when the front $\eps^{-}$ is of generation order $g^{-}=k\ge 1$, i.e.
\begin{equation}
    A_k=\sum_{\eps^->0, g^{-}=k} (|\eps^+|-|\eps^{-}|)+\xi\sum_{\eps^-<0, g^{-}=k} |\eps^+|-|\eps^{-}|)
\end{equation}
while $B_k$ accounts to fronts $\eps^{-}$ of generation order $g^{-}=k-1\ge 1$, i.e.
\begin{equation}
B_k=\xi \sum_{\eps^->0, g^{-}=k-1} |\eps_{refl}|+\sum_{\eps^-<0, g^{-}=k-1} |\eps_{refl}|\;,\qquad k\ge 2
\end{equation}
while $B_1\equiv 0$.
By rearranging the terms $B_k$ in the sum, we can rewrite~\eqref{DeltaV-3} as
\begin{align}\label{DeltaV-4}
\Delta V(t^n)&=\sum_{k\ge 1}  \xi^k\left [A_k+\xi B_{k+1}\right]\;.
\end{align}
Now,
\begin{align}\nonumber 
A_k+\xi B_{k+1} 
&=  -  \sum_{\eps^->0, g^{-}=k}|\eps_{refl}|-\xi   \sum_{\eps^-<0, g^{-}=k} |\eps_{refl}|\\
& \qquad +\xi^2  \sum_{\eps^->0, g^{-}=k}|\eps_{refl}|+\xi \sum_{\eps^-<0, g^{-}=k} |\eps_{refl}|\nonumber\\
& =(\xi^2-1)  \sum_{\eps^->0, g^{-}=k} |\eps_{refl}|\nonumber\\
&\le  \frac{(\xi^2-1)}{2} M\Delta t \sum_{\eps^->0, g^{-}=k} |\eps^{-}|\nonumber\\
 & \le \frac{(\xi^2-1)}{2} M\Delta t F_k(t^n-)\;, \label{DeltaV-5}
\end{align}
by Proposition~\ref{prop:estimate-time-step}. Combining~\eqref{DeltaV-4} with~\eqref{DeltaV-5}, we arrive at
\begin{equation}\label{DeltaV-6}
\Delta V(t^n)\le \frac{(\xi^2-1)}{2} M\Delta t \, V(t^n-).
\end{equation}

Taking into account the change of $V$ at interaction times and at time steps, we conclude
\begin{align}\label{DeltaV-7}
V(t)\le &\left(1+\frac{(\xi^2-1)}{2} M\Delta t\right) V(t^n-)\nonumber\\
\le &\left(1+\frac{(\xi^2-1)}{2} M\Delta t\right) V(t^{n-1}+)\nonumber\\
\le &\left(1+\frac{(\xi^2-1)}{2} M\Delta t\right)^n V(0+)\;.
\end{align}
for all $t\in[t^n,t^{n+1})$. The proof is complete.
\end{proof}

\subsection{Conclusion: Proof of Theorem~\ref{Th-2}}
Let us recall that the approximate solution satisfies $u^\nu(y,t)\in[u_{inf}^\nu,u_{sup}^\nu] $ as shown in Lemma~\ref{Lemmauinfsup}.
Taking into account that the characteristic speeds $\lambda$ (c.f.~Section \ref{S2.2}) range between $\lambda_{min}^\nu=\frac{\alpha}{u_{sup}^\nu}>0$ and 
$\lambda_{max}^\nu=\frac{\alpha}{u_{inf}^\nu}$, we get that the maximal time length $T_1^\nu$ for the waves of first generation to reach the boundaries $y=0,M$ is
\begin{align}\label{T1-new}
T_1^\nu&=\frac{M}{\alpha}u_{sup}^\nu =\frac{e^{2q} M}{\alpha} \min\{\tilde{u}_0^\nu, \tilde{u}_M^\nu\},
\end{align}
where $\tilde{u}_0^\nu, \tilde{u}_M^\nu$ are given in~\eqref{def:u0-nu_uM-nu}. In other words, there are no waves of generation order $k=1$ present for times $t>T_1^\nu$, hence $\tilde{F}_1(t)=\tilde{F}_2(t)$ for $t>T_1^\nu$.
The waves of generation order $k>1$ inherit this property in the following sense: 
\begin{equation}\label{S4: k gen prop-1}
F_k(t)=0 \quad\text{for  } t>k T_1^\nu
\end{equation}
\begin{equation}\label{S4: k gen prop-2}
\tilde{F}_1(t)=\tilde{F}_2(t)=\dots=\tilde{F}_k(t) \quad\text{for  } t\in[(k-1) T_1^\nu, k T_1^\nu]\,.
\end{equation}

Now, we observe that $V(0+)=\xi F_1(0)$ since $F_k(0+)=0$ for $k\ge 2$. Using that $\xi\ge 1$, we have the relation
\begin{align}\label{tildeFkt-V}
\tilde{F}_k (t) \le \frac{1}{\xi^k} \sum_{j\ge k} \xi^j F_j(t)
\le  \frac{1}{\xi^k} V(t)
\end{align}
between $\tilde{F}_k$ and $V$ at any time $t$.
Combining \eqref{DeltaV-lemma} with~\eqref{tildeFkt-V}, we arrive at the estimate
\begin{align}
    \tilde{F}_k (t) &\le\left( \frac{1}{\xi}\right)^{k-1} \left(1+\frac{(\xi^2-1)}{2} M\Delta t\right)^n F_1(0)\;\nonumber\\
    & \le \left( \frac{1}{\xi}\right)^{k-1} \exp\left(\frac{(\xi^2-1)}{2}M t \right) F_1(0)\nonumber\\
   & =  \xi \exp\left(\frac{(\xi^2-1)}{2}M t - k \log\xi\right) F_1(0) \label{eq:exp-xi-k}
\end{align}
for all times $t\in[t^n, t^{n+1})$. 
Now, we claim that 
\begin{equation}\label{S3.claimlim}
\lim_{t\to\infty} \tilde{F}_1(t)=0.
\end{equation}
Indeed, using the time $T_1^\nu$ elapsed for a generation order to abandon the domain $[0,M]$ and 
by property~\eqref{S4: k gen prop-1}--\eqref{S4: k gen prop-2}, we observe that
\begin{equation}\label{S4: k gen prop-3-NEW}
    \tilde{F}_1 (t)=\tilde{F}_k (t)\qquad t\in ((k-1) T_1^\nu, kT_1^\nu]\,. 
\end{equation}
For every $t>0$, let $k\in \N$ be the only value such that $t\in ((k-1) T_1^\nu, kT_1^\nu]$.  
Hence, one has $- k \le - t/ T_1^\nu$ and from \eqref{eq:exp-xi-k} we obtain

\begin{equation}\label{eq:bound-tilde-F1-1}
\tilde{F}_1 (t) \le  \xi e^{- t\, \lambda^\nu(\xi)  } F_1(0)\;,
\end{equation}
where
\begin{equation*}
 \lambda^\nu(\xi)\dot = \frac{(1- \xi^2)}{2}M  +  \frac{\log\xi}{T_1^\nu}\,.
\end{equation*}
We first show that, under condition~\eqref{Th-2assumption}, the quantity $\lambda^\nu$ is positive for suitable values of $\xi$.
%
Indeed, \eqref{T1-new} and~\eqref{eq:limits-init-data} imply
$$
\lim_{\nu\to\infty} T_1^\nu=\frac{e^{2q} M}{ \alpha\max\left\{\rho_0(a_0+),\rho_0(b_0-)\right\}}  ~ \dot = ~T_1^*\,.
$$
Moreover, condition~\eqref{Th-2assumption} yields $M T_1^* <1$. Then for all $\nu$ sufficiently large, i.e. for all $\nu\ge \nu_0$ with $\nu_0$ large enough, one has $M T_1^\nu <1$.
Now we consider $\lambda^\nu(\xi)$ for $\nu\ge \nu_0$
and observe that
$$
\lambda^\nu(1) = 0\,,\qquad \frac{d\lambda^\nu}{d\xi}= - \xi M + \frac{1}{\xi T_1^\nu} = \frac M \xi \left( \frac 1 {MT_1^\nu}- \xi^2 \right)\,.
$$
Therefore, $ \lambda^\nu(\xi)$ is strictly increasing on the interval $[1, (M T_1^\nu)^{-1/2}]$, with maximum value
\begin{equation*}
\lambda^\nu( (M T_1^\nu)^{-1/2} ) =\frac 1 {2 T_1^\nu} \left( M T_1^\nu -1 - \log(M T_1^\nu) \right)>0\,.
\end{equation*}
Hence, for every $\nu\ge \nu_0$, we restrict the range of values of $\xi$ to the interval
$$
\xi\in (1,\bar \xi_\nu]\qquad \bar \xi_\nu \dot =  \min\{  \frac 1{\sqrt {c(q)}}, \frac 1 {\sqrt {M T_1^\nu}} \} >1 \,,
$$
and this yields $\lambda^\nu (  \xi)>0$.
Now let's define 
$$
C_1^\nu = \lambda^\nu ( \bar \xi_\nu)\,,\qquad C^\nu_2 = (\bar \xi_\nu)^2\,.
$$
As $\nu\to\infty$, one has 
\begin{equation*}
\bar \xi_\nu\to\bar\xi,\qquad \lambda^\nu(\xi) \to \lambda(\xi),\quad\text{for }\xi\in (1,\bar \xi_\nu]
\end{equation*}
and
\begin{equation*}
 C_1^\nu \to C_1 ~\dot =~  \lambda(\bar \xi) \,,\qquad C_2^\nu \to C_2 ~\dot =~ (\bar \xi)^2\;,
\end{equation*}
where
\begin{align*}
 \bar\xi & \dot =  \min\{  \frac 1{\sqrt {c(q)}}, \frac 1 {\sqrt {M T_1^*}} \}>1\,,\qquad  \lambda(\xi) \dot = \frac{(1- \xi^2)}{2}M  +  \frac{\log\xi}{T_1^*}\qquad\xi\in (1,\bar \xi]\;.
\end{align*}
Therefore, for some $\nu_1\ge \nu_0$,  one has
$C_1^\nu \ge C_1 /2$\,, $C_2^\nu \le 2 C_2$, for all $\nu\ge \nu_1$.
Using the above estimates and \eqref{bound-on-Lxi}, inequality \eqref{eq:bound-tilde-F1-1} yields for $\xi=\bar \xi_\nu$ and $\nu\ge \nu_1$:
\begin{align}
L_{in}(t) \le \tilde{F}_1 (t) &\le \xi e^{- t\, C_1^\nu} F_1(0)\le 2C_2 e^{- \frac{C_1}2 t} q \;, \label{S4:Lindecay}
\end{align}
since $F_1(0)\le\xi L_{in}(0+) \le q$ by~\eqref{LLin-decreases}.
Here, $C_1$ and $C_2$ are the positive constants independent of $\nu$ and $t$ obtained above in the limit $\nu\to\infty$. Thus, claim~\eqref{S3.claimlim} follows and more precisely, the total variation of the approximate sequence decays exponentially fast as time tends to infinity.

Having now~\eqref{S4:Lindecay}, we can deduce by Lemmas~\ref{prop:equivalence-Lin},~\ref{lem:bounds-on-bv} 
and estimate~\eqref{eq:tv-v_m}, that as $t\to \infty$, the approximate solution $v^\nu$ satisfies 
\begin{equation}\label{S4:vnutinfinity}
\tv v^\nu(t)\le C_2' e^{-  \frac{C_1}2 
t}\to 0\end{equation}
independently of $\nu$ and $v^{\nu}(t)\to v_\infty^\nu$ for some constant $v_\infty^\nu$, 
and that $u^{\nu}(t)$ tends to a constant $u_\infty^\nu$, i.e.
\begin{equation}\label{S4:unutinfinity}
u^{\nu}(y,t)\to u_\infty^\nu\qquad\forall\, y\in(0,M)\,.
\end{equation}
It should be noted that $u^\nu_\infty\in[u^\nu_{inf},u^\nu_{sup}]$ by~\eqref{uinfsup}.

Furthermore, by~\eqref{eq:bound-on-TV_at_y} and~\eqref{S4:Lindecay}, we have
$$
    W^\nu_y(t)\le  \widetilde C_1  L_{in}(0)  
    + 4 \widetilde C_2 M \frac{C_2}{C_1}=:K
$$
for all $t>0$, with $K$ being a positive constant independent of time and $\nu$. Therefore, applying then the arguments in the proof of Lemma~\ref{lem:vertical-bounds}, there exist $K_1$, $L_1$ independent of $\nu\in\N$ such that \eqref{bound-bv-time} and \eqref{bound-integral-time}
 hold for every $T>0$.
As a consequence, for all $y\in[0,M]$
\begin{align}\label{S4bound-bv-time}
    \tv\{u^\nu(y,\cdot);[0,\infty)]\} \le K_1\,,\quad
     \tv\{v^\nu(y,\cdot);[0,\infty)]\} \le K_1\,,
\end{align}
and 
\begin{align}\label{S4bound-integral-time}
   \int_0^\infty \left| v^\nu(y_1,t) -  v^\nu(y_2,t)\right|\, dt \le L_1 \left| y_2 - y_1  \right|\qquad \forall\, y_1\,,\, y_2\in [0,M]\,.
\end{align}
The same property as \eqref{S4bound-integral-time} holds for $u^\nu$ as well. The improvement here, is that under condition~\eqref{Th-2assumption}, bounds~\eqref{S4bound-bv-time}--\eqref{S4bound-integral-time} hold uniformly in time. 
In particular, we obtain that $$(u^\nu,v^\nu)(y,\cdot) - (u_\infty^\nu,v_\infty^\nu)\in L^1((0,\infty))\;.$$

In view of the above analysis and Theorem~\ref{Th-1-lagr}, there exists a subsequence of $\{(u^{\nu}, v^{\nu})\}$ 
which converges, as $\nu\to\infty$, to a function $(u,v)$ in $L^1_{loc}\left((0,M)\times[0,+\infty) \right)$. By possibly passing to a subsequence,
the sequences $\{u_\infty^\nu\}$, $\{v_\infty^\nu\}$ converge as $\nu\to\infty$:
$$u_\infty~\dot=\lim_{\nu\to\infty} u_\infty^\nu>0\,, \qquad v_\infty~\dot=\lim_{\nu\to\infty} v_\infty^\nu\,,$$
with $u_\infty\in[u_{inf},u_{sup}]$.

Moreover, by the analysis in Section~\ref{Sect:3}, we get
\begin{equation}\label{S4eq:intnubdd}
b^\nu(t) - a^\nu(t)\to M u_\infty^\nu\qquad t\to\infty
\end{equation} 
from~\eqref{eq:support-length} and hence the length of the interval $I^\nu(t)$ remains bounded independently of time and $\nu$. 
Also, the approximate sequence $(\rho^\nu, \vv^\nu,\mm^\nu )$ defined in~\eqref{def-rho-v-m-in}--\eqref{def-rho-m-out} 
via $(u^\nu, v^\nu)$ satisfying~\eqref{S4:vnutinfinity}--\eqref{S4:unutinfinity}, has the properties

\begin{align}\nonumber
&\sup_{x\in I^\nu(t)} \left| \rho^\nu(x,t) - \frac{1}{u_\infty^\nu}\right|  \to 0,\quad
\sup_{x\in I^\nu(t)} \left| \vv^\nu(x,t)-v^\nu_\infty\right|\to 0, \\ \label{S4eq:supeulernu}
&\sup_{x\in I^\nu(t)} \left| \mm^\nu(x,t) - \frac{v^\nu_\infty}{u_\infty^\nu}\right|\to 0\;,
\end{align}
as $t\to\infty$. Moreover, thanks to \eqref{eq:identity-tvlnu}, the identity $u^\nu=\{\rho^\nu\}^{-1}$ and \eqref{S4:Lindecay}\,, 
we find that
\begin{align}\label{eq:tv-rho-decays}
\tv \{\rho^\nu(\cdot,t); I^\nu(t)\} &\le u^\nu_{sup} \tv\{\ln(\rho^\nu)(\cdot,t);I^\nu(t)\} \nonumber\\
&= \frac12  {u^\nu_{sup}} L_{in}(t) \le C_3 e^{- \frac{C_1}2 t} \,,
\end{align}
for some $C_3>0$ constant, which is independent of $\nu$. Under the notation
$$ \osc \{\vv^\nu; I^\nu(t)\} := \esssup_{x_1,x_2\in I^\nu(t)} |\vv^\nu(x_1,t) - \vv^\nu(x_2,t)|\,,$$
and the analogue notation for $ \osc \{v^\nu(t); (0,M) \}$, it should be noted that \eqref{S4:vnutinfinity} yields
\begin{equation}\label{S4:vnutinfinitytilde}
\osc \{\vv^\nu(t);I^\nu(t)\} =  \osc \{v^\nu(t); (0,M) \} \le \tv v^\nu(t)\le C_2' e^{-  \frac{C_1}2 
t}\to 0\;.\end{equation}
Furthermore, by Lemma~\ref{lem:int-of-v}, we immediately get
\begin{equation}
\label{S4eq:time-estimate-total-momentum}
    \limsup_{t\to\infty}\left|\II^\nu(t)\right|
    \le  \frac{\widetilde C}M (\eta_\nu+ \DT_\nu ) + \widetilde C \eta_\nu \DT_\nu+  \widetilde C\DT_\nu\;.
\end{equation}
On the other hand, the \textit{approximate total momentum} $\II^\nu(t)$, see \eqref{def:Jnu}, can be written
\begin{align*}
\II^\nu(t)&= \int_{a^\nu(t)}^{b^\nu(t)} \left(\mm^\nu(x,t)- \frac{v^\nu_\infty}{u_\infty^\nu}\right)\,dx   + \frac{v^\nu_\infty}{u_\infty^\nu}\left(b^\nu(t)-a^\nu(t) \right) + P^\nu_b(t) - P^\nu_a(t)
\\   \\
& \dot = (A) + (B)+ (C)
\,.
\end{align*}
Using~\eqref{S4eq:supeulernu} and~\eqref{S4eq:intnubdd}, as $t\to\infty$ we have
$$
|(A)|  \le \sup_{x\in I^\nu(t)} \left| \mm^\nu(x,t) - \frac{v^\nu_\infty}{u_\infty^\nu}\right| \cdot 
\left( {b^\nu(t)-a^\nu(t)}\right)\to 0\,,\quad  (B)\to M v^\nu_\infty
$$
since the factor $(b^\nu(t)-a^\nu(t))$ is bounded from~\eqref{S4eq:intnubdd}. Moreover, by using \eqref{def:Pnu} and \eqref{eq:tv-rho-decays}, we find that
\begin{align*}
\left|(C) \right| \le {\alpha^2} \int_0^t  e^{-M(t-\tau)} \tv \{\rho^\nu(\cdot,\t); I^\nu(\tau)\} \, d\t
\le {\alpha^2} C_3 e^{-Mt} \int_0^t e^{(M- \frac{C_1}2 )\tau} \,d\tau \to0
\end{align*}
as $t\to\infty$. In conclusion, the limit of $\II^\nu(t)$ exists,
$
\lim_{t\to\infty}\II^\nu(t)= M v^\nu_\infty 
$, and from \eqref{S4eq:time-estimate-total-momentum}, we have
\begin{equation*}
   \lim_{t\to\infty}\left|\II^\nu(t)\right|= M |v^\nu_\infty| \le  \frac{\widetilde C}M (\eta_\nu+\DT_\nu) + \widetilde C \eta_\nu \DT_\nu+  \widetilde C\DT_\nu\;.
\end{equation*}
Passing to the limit as $\nu\to\infty$, we arrive at
\begin{equation}\label{eq:v-infty-0}
   |v_\infty|  =   \lim_{\nu\to\infty} |v^\nu_\infty| \le   \lim_{\nu\to\infty} \left[\frac{\widetilde C}{M^2} (\eta_\nu+\DT_\nu)  +\frac{ \widetilde C }{M} (\eta_\nu +1)\DT_\nu\right]  = 0\;,
\end{equation}
and hence, conclude that $v_\infty=0$. 

Next, we  claim that
\begin{equation}
\label{S4eq:conv-int_v-0_dt-bis}
    \lim_{\nu\to\infty} \int_0^{T_\nu} v^\nu(0+,s) \, ds = \int_0^\infty v(0+,s) \, ds\,,\qquad T_\nu = (\eta_\nu)^{-1/2}\,,
\end{equation}
from which we obtain that $v(0+,\cdot) \in L^1(\R_+)$. Therefore, since $\lim_{t\to\infty} v(0+,t)$ exists, 
we conclude that $v(0+,t)$ converges to 0 as $t\to\infty$. 

To prove~\eqref{S4eq:conv-int_v-0_dt-bis}, we express the integral as:
\begin{align*}
 \int_0^{T_\nu} v^\nu(0+,s) \, ds  &=  \int_0^{T_\nu} \left(v^\nu(0+,s) - v_\infty^\nu  \right)\, ds + \underbrace{v_\infty^\nu T_\nu}_{\to 0}
\end{align*}
as $\nu\to \infty$ and the last term tends to $0$ thanks to \eqref{eq:v-infty-0}. 

Let's examine the first integral on the right hand side. 
First we observe that, thanks to \eqref{eq:conv-int_v-0_dt}, we can take a subsequence as $\nu\to\infty$, 
still denoted by $\left(u^\nu,v^\nu\right)$ such that $\left(v^\nu(0+,s) - v_\infty^\nu\right) \to v (0+,s)$ pointwise a.e. 
on $[0,+\infty)$, as $\nu\to\infty$.

Moreover we recall \eqref{eq:v-along-x=0} and find that
\begin{equation*}
\left| v^\nu(0+,t-) -  v^\nu(0+, s) \right|   \le  2\alpha \cosh(q) L_{in}(s)  \le C''_2 e^{- \frac{C_1}2 s}  \qquad \forall~t\ge s\,,
\end{equation*}
with $C''_2 = 4\alpha \cosh(q) q \, C_2$,  thanks to \eqref{S4:Lindecay}. As $t\to\infty$, we get
\begin{equation*}
\left| v_\infty^\nu  -  v^\nu(0+, s) \right|  \le C''_2 e^{- \frac{C_1}2 s} 
\end{equation*}
which is independent of $\nu$. Therefore by dominated convergence theorem we conclude that
$$
\int_0^{T_\nu} \left(v^\nu(0+,s) - v_\infty^\nu  \right)\, ds \to \int_0^\infty v(0+,s) \, ds\qquad \nu\to\infty
$$ 
and claim~\eqref{S4eq:conv-int_v-0_dt-bis} holds. 

Last, taking the limit as $\nu\to\infty$ in the approximate sequence $(\rho^\nu,\mm^\nu)$ as it is shown in Section~\ref{Sect:3}, 
we recover an entropy weak solution with concentration $(\rho,\mm)$ to~\eqref{eq:system_Eulerian_K=1} obtained in Theorem~\ref{Th-1},
with $M_1=0$ and compact support within $I(t)=[a(t),b(t)]$. Moreover, in view of the above analysis,
the free boundaries $a(t)$ and $b(t)$ converge to a finite limit:
$$\lim_{t\to\infty} a(t)=a_\infty\dot=\,a_0 + \int_0^\infty v(0+,s) ds,\qquad \lim_{t\to\infty} b(t)=b_\infty\dot =
    a_\infty + M u_\infty\;. $$
Also, thanks to bounds \eqref{S4:vnutinfinitytilde}, having possibly redefined $v$ on a set of measure $0$, 
we conclude that 
\begin{equation*}
\osc \{\vv(t);I(t)\} =  \osc \{v(t); (0,M) \} \le \tv v(t)
\le C_2' e^{-  \frac{C_1}2 t}\to 0\,.
\end{equation*}
and
 \begin{equation}
 \rho_\infty\dot =\lim_{t\to\infty}\rho(x,t) = \frac{1}{u_\infty},\qquad \lim_{t\to\infty}\vv(x,t)=\lim_{t\to\infty}\mm(x,t) =0,
 \end{equation}
 for all $x\in I(t)$, with $u_\infty\in[u_{inf},u_{sup}]$. Thus, the entropy weak solution $(\rho,\mm)$ with concentration to~\eqref{eq:system_Eulerian_K=1} under the condition~\eqref{Th-2assumption} satisfies~\eqref{eq:v-shrinks-to-0} and this immediately
 implies that $(\rho,\mm)$ admits time-asymptotic flocking. The proof of Theorem~\ref{Th-2} is complete. 


\appendix

\Section{Proof of technical Lemmas}

\subsection{Proof of Proposition~\ref{S2:prop:estimate-time-step}}\label{subsec:app1}
$(a)$ 
Recalling \eqref{eq:u-v_fractional-step}, then equation~\eqref{eq:one-timestep} is obtained by equating $u_r - u_\ell$ before and after the time step and by using the definition \eqref{eq:strengths} of the strengths. On the other hand, ~\eqref{eq:two-timestep} follows by the equation $v^+_r - v^+_\ell  = (1-M\DT) \left(v^-_r - v^-_\ell\right)$ and the parametrization of the wave curves \eqref{eq:lax13}.

\medskip
$(b)$ We first set  
\begin{align*}
& x=\eps_2^-\,,& \eps_1^+ =y(x,s) \,,\\
& s=\DT\,,  &\eps_2^+ = x + y(x,s)\,.
\end{align*}
Then to prove $(b)$, it suffices to prove that $y(x,s)$ is well defined and it satisfies
\begin{equation}\label{time-step_sign-properties}
x y(x,s) <0\,,\qquad   x\left(  x + y(x,s)\right)>0\qquad \mbox{ if } x\not=0\not=s\,.
\end{equation}

Identities~\eqref{eq:one-timestep}, \eqref{eq:two-timestep} lead to the implicit equation for $y(x,s)$
\begin{equation}\label{eq:implicit-time-step}
h\left(y(x,s)\right) + h\left(x+y(x,s)\right) = h(x)\left( 1 - Ms \right)\,. 
\end{equation}
Hence, define 
\begin{equation*}
F(x,y,s) = h\left(y\right) + h\left(x+y\right) - h(x)\left( 1 - Ms \right)\qquad x,~y\in \R\,,~ 0\le s < M^{-1}.
\end{equation*}
It is easy to check that the following properties hold,
\begin{align*}
&\partial_y F = h'(y) + h' (x+y)>0\\
&\lim_{y\to\pm\infty}  F(x,y,s) = \pm\infty\qquad \forall\, (x,s)\,.
\end{align*}
By the implicit function theorem, there exists a unique implicit function $y=y(x,s)$. Next, let's deduce other properties of $y(x,t)$:
\begin{itemize}
\item[(i)]since
$F(x,0,0) =F(0,0,s) =0$\,,
then
$$
y(x,0)=0= y(0,s)\qquad \forall\, x,s
$$
which implies that $y(x,s)={\mathcal O}(1) xs$ in a neighborhood of the origin;

\item[(ii)] for every $(x,s)$, $x\not=0\not = s$ and recalling that $h(x)x > 0$ for $x\not =0$, one has
\begin{align*}
x F(x,0,s) &= x\left[h(0) + h\left(x\right) - h(x)\left( 1 - Ms \right) \right] \\
& = x h(x) Ms >0
\end{align*}
and 
\begin{align*}
x F(x,-x,s) &=  x  h\left(-x\right) - x h(x)\left( 1 - Ms \right)  <0\,.
\end{align*}
Using the (increasing) monotonicity of $y\mapsto F(x,y,s)$, we obtain that
\begin{equation}\label{A1yestmx}
 \begin{cases} -x <y(x,s)<0& \mbox{ if } x>0\\[1mm]
-x >y(x,s)>0 & \mbox{ if } x<0\,,
\end{cases}
\end{equation}
and properties in \eqref{time-step_sign-properties} hold true. The proof is complete.
\end{itemize}

\subsection{Proof of Proposition~\ref{prop:estimate-time-step}}\label{subsec:app2}
We adopt the notation in Appendix~\ref{subsec:app1} for $x$, $s$ and $y$ and assume that $|x|\le q$ for some $q>0$. We rewrite the implicit equation 
\eqref{eq:implicit-time-step} in the two cases, depending on the sign of $x$.

\medskip
$\bullet$\quad For $0<x\le q$ we get
\begin{equation}\label{eq:implicit-time-step-x>0}
\sinh\left(y(x,s)\right) + y(x,s) = - M x s  \,. 
\end{equation}
Set $\gamma(y) = \sinh(y)+y$ and use the elementary identity $\gamma(y)=\gamma(y)-\gamma(0)=\gamma'(c)y$, with
\begin{equation*}
2 \le \gamma'(c) = 1 +\cosh(c) \le 1 +\cosh q\,,
\end{equation*}
since $|c|\le |y|\le q$ from~\eqref{A1yestmx}.
We deduce easily that
\begin{equation*}
 2|y|\le |\gamma(y)| = Mxs \le \left(1 +\cosh q\right) |y|\,,\qquad y=y(x,s)
\end{equation*}
from which we conclude that \eqref{time-step-reflected-wave} holds for $x=\eps_2^->0$.

\medskip
$\bullet$\quad On the other hand, for  $-q\le x<0$, the implicit equation \eqref{eq:implicit-time-step} rewrites as
\begin{equation}\label{eq:implicit-time-step-x<0}
\sinh\left( x+y(x,s)\right) - \sinh x + y(x,s) =   - Ms \sinh x  \,,\qquad x<0  \,. 
\end{equation}
As before, the function 
$$
\widetilde \gamma(y;x) \,\dot =\, \sinh(x+y)  - \sinh x + y\qquad  x+y<0
$$ 
satisfies 
$$
\widetilde \gamma(0;x) =0\,,\qquad \partial_y \widetilde \gamma = \cosh(x+y) +1\,.
$$
By using the sign rules for $x$, $y$ and $x+y$ established in  Proposition~\ref{S2:prop:estimate-time-step} (b), we find that $0\le |x+y| = |x|-y \le q$, and hence 
\begin{equation*}
2|y| \le |\widetilde \gamma(y;x)|  =  Ms \sinh {|x|}     \le \left(1 +\cosh q\right) |y|\,.
\end{equation*}
From the two inequalities above we easily deduce, on one hand, that
\begin{equation*}
 Ms|x|  \le Ms \sinh {|x|}     \le \left(1 +\cosh q\right) |y| = c_1(q)^{-1}  |y|
\end{equation*}
and therefore that
\begin{equation*}
c_1(q) \le \frac{|y|}{Ms|x|};
\end{equation*}
on the other hand, that
\begin{equation*}
\frac{|y|}{Ms|x|}\le \frac {\sinh {|x|} }{2 |x|} \le \frac {\cosh {q} }{2}\,. 
\end{equation*}
This completes the proof of \eqref{time-step-reflected-wave} for $x=\eps_2^- <0$.

\subsection{Proof of Lemma~\ref{lem:Delta-L-xi}}\label{subsec:app3}
This proof is an adaptation of the one of Prop. 5.8, Ref.~\cite{ABCD_JEE_2015} and takes into account the possible wave configurations.
We use the notation of Lemma \ref{lem:shock-riflesso} and assume $i=2$. Then the following basic identities hold, 
\begin{align}
\eps_2 -  \eps_1 & =   \alpha_2 + \beta_2\,,
\label{tre-uno}
\\
h(\eps_1) + h(\eps_2) & =  h(\alpha_2) + h(\beta_2)\,. \label{tre-due}
\end{align}
Namely, 

$\bullet$~~ \eqref{tre-uno} is obtained by equating $(u_r - u_\ell)$ before and after the interaction and by using the definition \eqref{eq:strengths} of the strengths, 

$\bullet$~~ while \eqref{tre-due} is obtained by equating $(v_r - v_\ell)$ before and after the interaction and by using the parametrization \eqref{eq:lax13}.

\medskip
\paragraph{\fbox{$SS\to RS$}}\quad In this case, two shocks interact and give rise to a propagating shock of size $\eps_2$ and a rarefaction of size $\eps_1$.
The signs are 
$$
\alpha_2<0\,,\qquad  \beta_2<0\,,\qquad \eps_2<0 < \eps_1\,.
$$
Then, by \eqref{tre-uno}, we get
$$
\Delta L_{in} = |\eps_1| + |\eps_2|-|\alpha_2|- |\beta_2| = 0\,,
$$
and hence
\begin{align}\nonumber
    \Delta L_\xi + |\eps_1|(\xi - 1) & =  |\eps_1| + \xi \left(|\eps_2|-|\alpha_2|- |\beta_2| \right)  + |\eps_1|(\xi - 1)\\
    &= \xi (|\eps_1| + |\eps_2|-|\alpha_2|- |\beta_2|)= 0\,,
    \label{Delta-Lxi-SS-RS}
\end{align}
that is, \eqref{eq:refined-decay-Lxi} holds. Notice that the above argument is valid for every $\xi\ge 1$\,.

\medskip
\paragraph{\fbox{$SR,\, RS\to SR,\, SS$}}\quad In this case, the incoming waves have different sign; the resulting wave 
may be positive or negative, while the reflected wave is necessarily a shock.

Without loss of generality, assume $\alpha_2<0<\beta_2$. We claim that the following inequality (which is stronger that \eqref{eq:refined-decay-Lxi}) holds,
\begin{equation}\label{Delta_L_xi_13-SR}
\Delta L_\xi + |\eps_1|\xi (\xi - 1)  \le  0\,.
\end{equation}

\fbox{$SR \to SS$}\quad Indeed, if $\eps_2$ is a shock, then the signs are 
$$
\alpha_2<0< \beta_2 \,,\qquad \eps_2<0\,,\qquad  \eps_1<0
$$
and so \eqref{tre-uno} gives
$$
 |\eps_1| - |\eps_2|  =  -| \alpha_2| + |\beta_2|\,,
$$
while
$$
\Delta L_\xi = \xi( |\eps_1| + |\eps_2|-|\alpha_2|) - |\beta_2|\,.
$$
Therefore,
\begin{align*}
\Delta L_\xi + |\eps_1|\xi (\xi - 1) &=\xi^2 |\eps_1| + \xi(|\eps_2|-|\alpha_2|) - |\beta_2|\\
&= \xi^2 |\eps_1| + \xi(|\eps_1|-|\beta_2|) - |\beta_2|\\
&= (\xi+1) (\xi|\eps_1| - |\beta_2|)\,.
\end{align*}
By \eqref{eq:chi_def}, we find that  $|\eps_1| \le c(m) |\beta_2|$ and hence
\begin{align*}
\Delta L + |\eps_1|\xi (\xi - 1) 
&\le  (\xi+1) \left(\xi c(m) - 1\right) |\beta_2|\le 0
\end{align*}
because of bounds on $\xi$ in \eqref{bounds-on-xi}. Therefore \eqref{Delta_L_xi_13-SR} holds in this case.

\medskip
\fbox{$SR\to SR$}\quad On the other hand, assume that the resulting wave, of size $\eps_2$, is a rarefaction:
$$
\alpha_2<0< \beta_2 \,,\qquad  \eps_1<0 < \eps_2\,.
$$
Then
$$
\Delta L_\xi = \xi( |\eps_1| -|\alpha_2|)  + |\eps_2| - |\beta_2|
$$
and the left hand side of \eqref{Delta_L_xi_13-SR} turns out to be 
\begin{align*}
\Delta L_\xi + |\eps_1|\xi (\xi - 1)  &= \xi^2 |\eps_1| + |\eps_2| -\xi|\alpha_2| - |\beta_2| \\
& = \xi \underbrace{\left(\xi |\eps_1|   - |\alpha_2|  \right)}_{(A)} +  \underbrace{|\eps_2| - |\beta_2|}_{(B)}\,.
\end{align*}
As above, from \eqref{eq:chi_def} and \eqref{bounds-on-xi}, we obtain  
$$
(A) \le \xi c(m)  |\alpha_2| -  |\alpha_2| \le 0\,,
$$
while by Lemma~\ref{lem:shock-riflesso}, (b), the amount of shocks and of rarefaction decreases across the interaction, that is
$$
|\eps_1| < |\alpha_2|\,,\qquad |\eps_2| < |\beta_2|
$$
that implies $(B)<0$.

This completely proves \eqref{Delta_L_xi_13-SR}, and concludes the proof of the Lemma~\ref{lem:Delta-L-xi}.

\section*{Acknowledgment}
The research that led to the present paper was partially supported by 2020 INdAM-GNAMPA Project "Buona posi\-tu\-ra, regolarit\`a e controllo per alcune equazioni di
evolu\-zio\-ne". The first author kindly acknowledges the hospitality of the University of Cyprus, where part of this work was done. 
The authors would like to thank Prof. Alberto Bressan for a useful discussion on the setup of the model.

\end{document}